\DeclareMathAlphabet{\scr}{U}{rsfs}{m}{n}
\def\draftdate{January 3, 2021}
\newcommand{\mC}{\aM}
\newcommand{\mS}{\mathbf{1}}
\newcommand{\mE}{\aE}
\newcommand{\mSE}{\mathbf{*}}
\newcommand{\mtimes}{\mathbin{\square}}
\newcommand{\metimes}{\times}
\newcommand{\mtensor}{\otimes}
\newcommand{\kay}{k}
\newcommand{\amap}{a}
\newcommand{\pmap}{c}
\newcommand{\oEnd}{\oE{\mathrm{nd}}}
\newcommand{\oCom}{\oC{\mathrm{om}}}
\newcommand{\oAss}{\oA{\mathrm{ss}}}
\newcommand{\nsAss}{\overline{\oA{\mathrm{ss}}}}
\newcommand{\nsA}{\overline{\oA}}
\newcommand{\nsB}{\overline{\oB}}
\newcommand{\nsC}{\overline{\oC}}
\newcommand{\nsCl}{\overline{\oC}{}^{\ell}}
\newcommand{\nsE}{\overline{\oE}}
\newcommand{\nsO}{\overline{\oO}}
\newcommand{\nbO}{\overline{\bO}}
\newcommand{\oIdp}{\oI_{\mathrm{dbp}}}
\newcommand{\tbC}{\tilde\bC}
\newcommand{\Ln}{\Sigma^{n}\otimes_{\bC_{n}}}
\newcommand{\NS}{_{\not\Sigma}}
\newcommand{\nU}{\overline U\vrule height1em depth0pt width0pt}
\newcommand{\Top}{\aS}
\newcommand{\Set}{\oS\mathrm{et}}
\newcommand{\rU}{\mathbf{U}}
\mathchardef\varDelta="7101
\newcommand{\DDelta}{{\mathbf \varDelta}}
\newcommand{\ssdot}{\bullet}
\newcommand{\subdot}{_{\ssdot}}
\newcommand{\supdot}{^{\ssdot}}
\newcommand{\noloc}{\;{:}\,}
\newcommand{\myop}[1]{\mathop{\textstyle #1}\limits}
\newcommand{\putatop}[2]{\genfrac{}{}{0pt}{}{#1}{#2}}
\newcommand{\phat}{^{\wedge}_{p}}
\newcommand{\tofrom}{\xymatrix@C-1pc{\ar@<.5ex>[r]&\ar@<.5ex>[l]}}
\newcommand{\toto}{\xymatrix@C-1pc{\ar@<.5ex>[r]\ar@<-.5ex>[r]&}}
\newcommand{\bA}{{\mathbb{A}}}
\newcommand{\bB}{{\mathbb{B}}}
\newcommand{\bC}{{\mathbb{C}}}
\newcommand{\bE}{{\mathbb{E}}}
\newcommand{\bF}{{\mathbb{F}}}
\newcommand{\bL}{{\mathbb{L}}}
\newcommand{\bN}{{\mathbb{N}}}
\newcommand{\bO}{{\mathbb{O}}}
\newcommand{\bP}{{\mathbb{P}}}
\newcommand{\bQ}{{\mathbb{Q}}}
\newcommand{\bR}{{\mathbb{R}}}
\newcommand{\bS}{{\mathbb{S}}}
\newcommand{\bT}{{\mathbb{T}}}
\newcommand{\bU}{{\mathbb{U}}}
\newcommand{\bZ}{{\mathbb{Z}}}
\DeclareMathAlphabet{\mathscr}{U}{rsfs}{m}{n}
\let\catsymbfont\mathscr
\newcommand{\aC}{{\catsymbfont{C}}}
\newcommand{\aD}{{\catsymbfont{D}}}
\newcommand{\aE}{{\catsymbfont{E}}}
\newcommand{\aM}{{\catsymbfont{M}}}
\newcommand{\aS}{{\catsymbfont{S}}}
\let\opsymbfont\mathcal 
\newcommand{\oA}{{\opsymbfont{A}}}
\newcommand{\oB}{{\opsymbfont{B}}}
\newcommand{\oC}{{\opsymbfont{C}}}
\newcommand{\oI}{{\opsymbfont{I}}}
\newcommand{\oE}{{\opsymbfont{E}}}
\newcommand{\oK}{{\opsymbfont{K}}}
\newcommand{\oL}{{\opsymbfont{L}}}
\newcommand{\oO}{{\opsymbfont{O}}}
\newcommand{\oP}{{\opsymbfont{P}}}
\newcommand{\oS}{{\opsymbfont{S}}}
\newcommand{\oU}{{\opsymbfont{U}}}
\newcommand{\oX}{{\opsymbfont{X}}}
\newcommand{\oY}{{\opsymbfont{Y}}}
\newcommand{\iso}{\cong}     
\newcommand{\sma}{\wedge}    
\renewcommand{\to}{\mathchoice{\longrightarrow}{\rightarrow}{\rightarrow}{\rightarrow}}
\newcommand{\sto}{\rightarrow}
\newcommand{\from}{\mathchoice{\longleftarrow}{\leftarrow}{\leftarrow}{\leftarrow}}
\newcommand{\overto}[1]{\xrightarrow{\,#1\,}}
\newcommand{\overfrom}[1]{\xleftarrow{\,#1\,}}
\def\quickop#1{\expandafter\DeclareMathOperator\csname #1\endcsname{#1}}
\newtheorem{thm}[equation]{Theorem}
\newtheorem{ethm}[equation]{Example Theorem}
\newtheorem{add}[equation]{Addendum}
\newtheorem{cor}[equation]{Corollary}
\newtheorem{lem}[equation]{Lemma}
\newtheorem{prop}[equation]{Proposition}
\theoremstyle{definition}
\newtheorem{defn}[equation]{Definition}
\newtheorem{cons}[equation]{Construction}
\newtheorem{notn}[equation]{Notation}
\theoremstyle{remark}
\newtheorem{example}[equation]{Example}
\numberwithin{equation}{section}
\chardef\tildechar"7E
\undefined\usepackage{url}\fi
\newcommand{\term}[1]{\emph{#1}}
\renewcommand{\theenumi}{\roman{enumi}}
\newcommand{\deflist}{\renewcommand{\theenumi}{\alph{enumi}}}
\long\def\ignore#1\endignore{\relax}
\def\texorpdfstring#1#2{#1}
\begin{document}

\title{Operads and Operadic Algebras in Homotopy Theory}
\author{Michael A. Mandell}
\address{Department of Mathematics, Indiana University, Bloomington, IN}
\email{mmandell@indiana.edu}

\date{\draftdate}

\maketitle
\setcounter{tocdepth}{1}
\tableofcontents

\section{Introduction}

Operads first appear in the book \textit{Geometry of
Iterated Loop Spaces} by J.\ P.\ May~\cite{May-GILS}, though Boardman
and Vogt had earlier implicitly defined a mathematically equivalent
notion as a ``PROP in standard form''~\cite[\S2]{BV-Bull}.  In those
works, operads and operadic algebra structures provide a recognition
principle and a delooping machine for $n$-fold loop spaces and
infinite loop spaces.  The basic idea is that an operad should
encode the operations in some kind of homotopical algebraic structure.
For example, an $n$-fold loop space $\Omega^{n}X$ comes with
$n$ different multiplications $(\Omega^{n}X)^{2}\to \Omega^{n}X$, which
can be iterated and generalized to a space of $m$-ary maps
$\oC_{n}(m)$ (from $(\Omega^{n}X)^{m}$ to $\Omega^{n}X$); here $\oC_{n}$
is the Boardman-Vogt little $n$-cubes operad (see
Construction~\ref{cons:lilcubes} and Section~\ref{sec:loop} below).
The content of the recognition theorem is that $\oC_{n}$ specifies a
structure that is essentially equivalent to the structure of an
$n$-fold loop space for connected spaces.  It was clear even at the
time of introduction 
that operads were a big idea and in the almost 50 years since then,
operads have found a wide range of other uses in a variety of areas of
mathematics: a quick MathSciNet search for papers since 2015 with
``operad'' in the title comes up with papers in combinatorics,
algebraic geometry, nonassociative algebra, geometric group theory,
free probability, mathematical modeling, and physics, as well as in
algebraic topology and homological algebra.

Even the topic of operads in algebraic topology is too broad to cover
or even summarize in a single article.  This expository article
concentrates on what the author views as the basic topics in the
homotopy theory of operadic algebras: the definition of operads, the
definition of algebras over operads, structural aspects of categories
of algebras over operads, model structures on algebra categories, and
comparison of algebra categories when changing operad or underlying
category.  In addition, it includes two applications of the
theory: The original application to $n$-fold loop spaces, and an
application to algebraic models of homotopy types (chosen purely on
the basis of author bias).  This leaves out a long list of other
topics that could also fit in this handbook, such as model structures
on operads, Koszul duality, deformation theory and Quillen
(co)homology, multiplicative structures in stable homotopy theory (for
example, on Thom spectra, $K$-theory spectra, etc.), Deligne and
Kontsevich conjectures, string topology,
factorization homology, construction of moduli spaces, and Goodwillie
calculus, just to name a few areas.

\subsection*{Notation and conventions}
Although we concentrate on operads and operadic algebras in
topology, much of the
background applies very generally.  Because of this and because we
will want to discuss both the case of spaces and the case of spectra,
we will use neutral notation: let $\mC$ denote a symmetric monoidal
category \cite[\S1.4]{Kelly-BasicConcepts}, writing $\mtimes$ for the
monoidal product and $\mS$ for the unit.  (We will uniformly omit
notation for associativity isomorphisms and typically omit notation
for commutativity isomorphisms, but when necessary, we will write
$\pmap_{\sigma}$ for the commutativity isomorphism associated to a
permutation $\sigma$.)  Usually, we will want $\mC$ to have coproducts
and sometimes more general colimits, which we will expect to commute
with $\mtimes$ on each side (keeping the other side fixed).  This
exactness of $\mtimes$ is automatic if the monoidal structure is
closed~\cite[\S1.5]{Kelly-BasicConcepts}, i.e., if for each fixed
object $X$ of $\mC$, the functor $(-)\mtimes X$ has a right adjoint;
this is often convenient to assume, and when we do, we will use
$F(X,-)$ for the right adjoint.  The three basic classes of examples
to keep in mind are:
\begin{enumerate}
\item ``Convenient categories of topological spaces'' including
compactly generated weak Hausdorff spaces~\cite{McCord-ISP}; then $\mtimes$
is the categorical product, $\mS$ is the final object (one point
space), and $F(X,Y)$ is 
the function space, often written $Y^{X}$.
\item ``Modern categories of spectra'' including EKMM
$S$-modules~\cite{EKMM}, symmetric spectra~\cite{HSS}, and orthogonal
spectra~\cite{MMSS}; then $\mtimes$ is the smash product, $\mS$ is the
sphere spectrum, and $F(-,-)$ is the function spectrum.
\item The category of chain complexes of modules over a commutative ring
$R$; then $\mtimes$ is the tensor product over $R$, $\mS$ is the
complex $R$ concentrated in degree zero, and $F(-,-)$ is the
$\Hom$-complex $\Hom_{R}(-,-)$.
\end{enumerate}  
(We now fix a convenient category of spaces and just call it ``the
category of spaces'' and the objects in it ``spaces'', ignoring
the classical category of topological spaces.)

In the context of operadic algebras in spectra (i.e., (ii) above), it
is often technically convenient to use operads of spaces.  However,
for uniformity of exposition, we have written this article in terms of
operads internally in $\mC$.  The unreduced suspension functor
$\Sigma^{\infty}_{+}(-)$ converts operads in spaces to operads in the
given category of spectra.

\subsection*{Outline}
The basic idea of an operad is that the pieces of it should
parametrize a class of $m$-ary operations.  From this perspective, the
fundamental example of an operad is the \term{endomorphism operad} of
an object $X$,
\[
\oEnd_{X}(m):=F(X^{(m)},X)\qquad \qquad
X^{(m)}:=\underbrace{X\mtimes \dotsb \mtimes X}_{m\text{ factors}},
\]
which parametrizes all $m$-ary maps from $X$ to itself.  Abstracting
the symmetry and composition properties leads to the definition of
operad in~\cite{May-GILS}.  We review this definition in
Section~\ref{sec:def1}.

Section~\ref{sec:ainf} presents some basic examples of operads important
in topology, including some $A_{\infty}$ operads, $E_{\infty}$
operads, and $E_{n}$ operads. 

May chose the term ``operad'' to match the term ``monad''
(see~\cite{May-MacLane}), to show their close connection.  Basically,
a monad is an abstract way of defining some kind of structure on
objects in a category, and an operad gives a very manageable kind
of monad. Section~\ref{sec:monad} reviews the monad associated to an operad and
defines algebras over an operad.

Section~\ref{sec:modules} gives the basic definition of a module over
an operadic algebra and reviews the basics of the homotopy theory of
module categories.

Section~\ref{sec:limit} discusses limits and colimits in categories of
operadic algebras.  It includes a general filtration
construction that often provides the key tool to study pushouts of
operadic algebras homotopically in terms of colimits in the underlying
category. Section~\ref{sec:enrich} discusses when
categories of operadic algebras are enriched, and in the case of
categories of algebras enriched over spaces, discusses the geometric
realization of simplicial and cosimplicial algebras.  Although these
sections may seem less basic and more technical than the previous
sections, the ideas here provide the tools necessary 
for further work with operadic algebras using the modern methods of
homotopy theory.  

Model structures on categories of operadic algebras provide a
framework for proving comparison theorems and rectification theorems.
Section~\ref{sec:model} reviews some aspects of model
category theory for categories of operadic algebras.
In the terminology of this article, a \term{comparison theorem} is an
equivalence of homotopy theories between categories of algebras over
different operads that are equivalent in some sense (for example,
between categories of algebras over different $E_{\infty}$ operads) or between
categories of algebras over equivalent base categories (for example,
$E_{\infty}$ algebras in spaces versus $E_{\infty}$ algebras in
simplicial sets).  A \term{rectification theorem} is a comparison
theorem when one of the operads is discrete in some sense: 
a comparison theorem for the category of algebras over an 
$A_{\infty}$ operad and the category of associative algebras is an example
of a rectification theorem, as is the comparison theorem for
$E_{\infty}$ algebras and commutative algebras in modern categories of
spectra.  Section~\ref{sec:compare} discusses these and other examples
of comparison and rectification theorems.  In both
Sections~\ref{sec:model} and~\ref{sec:compare}, 
instead of stating theorems of maximal generality, we have chosen to provide
``Example Theorems'' that capture some examples of particular interest
in homotopy theory and stable homotopy theory.  Both the statements
and the arguments provide examples: the arguments apply or can be
adapted to apply in a wide range of generality.

The Moore space is an early rectification technique (pre-dating operads
and $A_{\infty}$ monoids) for producing a genuine associative monoid
version of the loop space; the construction applies generally to
a little $1$-cubes algebra to produce an associative algebra that we
call the \term{Moore algebra}.  The
concept of modules over an operadic algebra leads to another way of
producing an associative algebra, called the \term{enveloping
algebra}.   Section~\ref{sec:rectass} compares these constructions and
the rectification of $A_{\infty}$ algebras constructed in
Section~\ref{sec:compare}. 

Sections~\ref{sec:loop} and~\ref{sec:cochains} review two significant
applications
of the theory of operadic algebras. Section~\ref{sec:loop} reviews the
original application: the
theory of iterated loop spaces and the recognition principle in terms
of $E_{n}$ algebras.  Section~\ref{sec:cochains} reviews the
equivalence between the rational and $p$-adic homotopy theory of
spaces with the homotopy theory of $E_{\infty}$ algebras.

\subsection*{Acknowledgments}
The author benefited from conversations and advice from Clark Barwick,
Agn\`es Beaudry, Julie Bergner,  Myungsin Cho,
Bj{\o}rn Dundas, Tyler Lawson, Andrey Lazarev, Amnon Neeman, Brooke
Shipley, and Michael Shulman while working on this chapter.  The
author thanks Peter May for his mentorship in the 1990s (and beyond)
while learning these topics and for help straightening out some of the
history described here.  The author thanks the Isaac Newton Institute
for Mathematical Sciences for support and hospitality during the
program ``Homotopy harnessing higher structures'' (HHH) when work on
this chapter was undertaken; this work was supported by: EPSRC Grant
Number EP/R014604/1. The author was supported in part by NSF grants
DMS-1505579 and DMS-1811820 while working on this project.  Finally,
the author thanks Andrew Blumberg for extensive editorial advice.

\section{Operads and Endomorphisms}
\label{sec:def1}

We start with the definition of an operad.  
The collection of $m$-ary endomorphism objects
$\oEnd_{X}(m)=F(X^{(m)},X)$ provides the prototype for the definition,
and we use its intrinsic structure to motivate and explain it. 
Although the endomorphism objects only make sense when the symmetric
monoidal category is ``closed'' (which means
that function objects exist), the definition of operad will not
require or assume function objects, nor will the definition of
operadic algebra in Section~\ref{sec:monad}.
To take in the picture, it might be best just to take $\mC$ to be the
category of spaces, the category of vector spaces over a field, or the
category of sets on first introduction to this material.

In our basic classes of examples, and more generally as a principle of
enriched category theory, function objects behave like sets of
morphisms: the counit of the defining adjunction
\[
F(X,Y)\mtimes X\to Y
\]
is often called the \textit{evaluation map} (and denoted $ev$). It
allows ``element-free'' definition and study of composition: iterating
evaluation maps 
\[
F(Y,Z)\mtimes F(X,Y)\mtimes X\to F(Y,Z)\mtimes Y\to Z
\]
induces (by adjunction) a \term{composition map}
\[
\circ \colon F(Y,Z)\mtimes F(X,Y)\to F(X,Z).
\]
One can check just using the basic properties of adjunctions that this
composition is associative in the obvious sense.  It is also unital:
the identity element of $\mC(X,X)$ specifies a map $1_{X}\colon \mS\to
F(X,X)$,
\[
\id_{X} \in \mC(X,X)\iso \mC(\mS \mtimes X,X)\iso \mC(\mS,F(X,X)), 
\]
where the first isomorphism is induced by the unit isomorphism;
essentially by construction, the composite
\[
\mS\mtimes X\overto{1_{X}\mtimes \id_{X}} F(X,X)\mtimes X\overto{ev} X
\]
is the unit isomorphism.  It follows that the diagram
\[
\xymatrix{%
\mS\mtimes F(X,Y)\ar[r]^{\iso}\ar[d]_{1_{Y}\mtimes \id_{F(X,Y)}}
&F(X,Y)\ar@{=}[d]
&F(X,Y)\mtimes \mS\ar[l]_{\iso}\ar[d]^{\id_{F(X,Y)}\mtimes 1_{X}}\\
F(Y,Y)\mtimes F(X,Y)\ar[r]_-{\circ}&F(X,Y)&F(X,Y)\mtimes F(X,X)\ar[l]^-{\circ}
}
\]
commutes, where the top-level isomorphisms are the unit isomorphisms.
More is true: the function objects enrich the category $\mC$ over
itself, and the $\mtimes,F$ parametrized adjunction is itself enriched
\cite[\S1.5--6]{Kelly-BasicConcepts}. 

In the case when $\mC$ is the category of spaces, the
evaluation map is just the map that evaluates functions on their
arguments; thinking in these terms will make the formulas and checks
clearer for the reader not used to working with adjunctions.  Since in
the category of spaces $\mS$ is the one point space, a map out of
$\mS$ just picks out an element of the target space and the map $\mS\to
F(X,X)$ is just the map that picks out the identity map of $X$.

The basic compositions above generalize to associative and unital
$m$-ary compositions; now for simplicity and because it is the main
case of interest here, we restrict to considering a fixed object $X$.
The $m$-ary composition takes the form
\[
F(X^{(m)},X)\mtimes (F(X^{(j_{1})},X)\mtimes \dotsb \mtimes
F(X^{(j_{m})},X))\to F(X^{(j)},X)
\]
where $j=j_{1}+\dotsb+j_{m}$ and (as in the introduction) $X^{(m)}$
denotes the $m$th $\mtimes$ power of $X$; we think of the $m$-ary
composition as plugging in the $m$ $j_{i}$-ary maps into the first
$m$-ary map; it is adjoint to the map
\begin{multline*}
F(X^{(m)},X)\mtimes F(X^{(j_{1})},X)\mtimes \dotsb \mtimes
F(X^{(j_{m})},X) \mtimes X^{(j)}
\iso\\
F(X^{(m)},X)\mtimes F(X^{(j_{1})},X)\mtimes \dotsb \mtimes
F(X^{(j_{m})},X) \mtimes X^{(j_{1})}\mtimes \dotsb \mtimes X^{(j_{m})}
\to X
\end{multline*}
that does the evaluation map
\[
F(X^{(j_{i})},X)\mtimes X^{(j_{i})}\to X,
\]
then collects the resulting $m$ factors of $X$ and does the evaluation
map 
\[
F(X^{(m)},X)\mtimes X^{(m)}\to X.
\]
In this double evaluation, implicitly we have shuffled some of the
factors of $X$ past some of the endomorphism objects, but we take care
not to permute factors of $X$ among themselves or the endomorphism
objects among themselves.
This defines a composition map 
\[
\Gamma^{m}_{j_{1},\dotsc,j_{m}}\colon \oEnd_{X}(m)\mtimes
\oEnd_{X}(j_{1})\mtimes \dotsb \mtimes \oEnd_{X}(j_{m})\to \oEnd_{X}(j).
\]
The composition is associative and unital in the obvious sense (which
we write out in the definition of an operad,
Definition~\ref{def:operad}, below).  

We now begin systematically writing $\oEnd_{X}(m)$ for
$F(X^{(m)},X)$.  We note that $\oEnd_{X}(m)=F(X^{(m)},X)$ has a right
action by the symmetric group $\Sigma_{m}$ induced by the left action
of $\Sigma_{m}$ on $X^{(m)}$ corresponding to permuting the
$\mtimes$-factors.  In general, for a permutation $\sigma$, we 
write $\pmap_{\sigma}$ for the map that permutes $\mtimes$-factors and
$\amap_{\sigma}$ for the action of $\sigma$ on $\oEnd_{X}(m)$, i.e., the
map that does $\pmap_{\sigma}$ on the domain of $\oEnd_{X}(m)=F(X^{(m)},X)$.  We now study what happens when we permute the
various factors in the formula for $\Gamma$ above.  (As these are a
bit tricky, we do the formulas out here and repeat them below
in the definition of an operad, Definition~\ref{def:operad}.)

First consider what happens when we permute the factors of $X$.  We
have nothing to say for an arbitrary permutation of the factors of
$X$, but in the composition $\Gamma^{m}_{j_{1},\dotsc,j_{m}}$, we can
say something for a permutation that permutes the factors only within
their given blocks of size $j_{1},\dotsc,j_{m}$, i.e., when the
overall permutation $\sigma$ of all $j$ factors is the block sum of
permutations $\sigma_{1}\oplus \dotsb \oplus \sigma_{m}$ with
$\sigma_{i}$ in $\Sigma_{j_{i}}$.  By extranaturality, performing the
right action of $\sigma_{i}$ on $\oEnd_{X}(j_{i})$ and evaluating is
the same as applying the left action of $\sigma_{i}$ on $X^{(j_{i})}$
and evaluating.  It follows that the composition
$\Gamma^{m}_{j_{1},\dotsc,j_{m}}$ is $(\Sigma_{j_{1}}\times \dotsb
\times \Sigma_{j_{m}})$-equivariant where we use the
$\Sigma_{j_{i}}$-actions on the $\oEnd_{X}(j_{i})$'s in the source and
block sum with the $\Sigma_{j}$-action on $\oEnd_{X}(j)$ on the
target.

Permuting the endomorphism object factors is easier to understand when
we also permute the corresponding factors of $X$.  In the context of
$\Gamma^{m}_{j_{1},\dotsc,j_{m}}$, for $\sigma$ in $\Sigma_{m}$, let
$\sigma_{j_{1},\dotsc,j_{m}}$ be the element of $\Sigma_{j}$ that
permutes the blocks $X^{(j_{1})}$,\dots, $X^{(j_{m})}$ as $\sigma$
permutes $1$,\dots,$m$.  So, for example, if $m=3$, $j_{1}=1$,
$j_{2}=3$, $j_{3}=2$, and $\sigma=(23)$, then $\sigma_{1,3,2}$ is the
permutation
\[
(23)_{1,3,2}=
\left\{
\vcenter{\tiny
\xymatrix@C-2pc@R-1.5pc{%
1\ar[d]&2\ar[d]&3\ar[d]&4\ar[d]&5\ar[d]&6\ar[d]\\
1&5&6&2&3&4
}}%
\right\} = (25364).
\]
In 
$\oEnd_{X}(j_{1})\mtimes \dotsb \mtimes \oEnd_{X}(j_{m})\mtimes X^{(j)}$,
if we apply $\sigma$ to permute the endomorphism object factors and
$\sigma_{j_{1},\dotsc,j_{m}}$ to permute the $X$ factors, then
evaluation pairs the same factors as with no permutation and the
diagram
\[
\xymatrix{%
(\oEnd_{X}(j_{1})\mtimes \dotsb \mtimes \oEnd_{X}(j_{m}))\mtimes X^{(j)}
\ar[r]^-{ev}
\ar[d]_{\pmap_{\sigma} \mtimes \pmap_{\sigma_{j_{1},\dotsc,j_{m}}}}
&X^{(m)}\ar[d]^{\pmap_{\sigma}}\\
(\oEnd_{X}(j_{\sigma^{-1}(1)})\mtimes \dotsb \mtimes 
\oEnd_{X}(j_{\sigma^{-1}(m)}))\mtimes X^{(j)}\ar[r]_-{ev}
&X^{(m)}
}
\]
commutes. This now tells us what happens with
$\Gamma^{m}_{j_{1},\dotsc,j_{m}}$ and the permutation action on
$\oEnd_{X}(n)$: the composite of the right action of $\sigma$ on
$\oEnd_{X}(m)$ with $\Gamma^{m}_{j_{1},\dotsc,j_{m}}$
\begin{multline*}
\oEnd_{X}(m)\mtimes 
(\oEnd_{X}(j_{1})\mtimes \dotsb \mtimes \oEnd_{X}(j_{m}))\\
\overto{\amap_{\sigma} \mtimes \id}
\oEnd_{X}(m)\mtimes 
(\oEnd_{X}(j_{1})\mtimes \dotsb \mtimes \oEnd_{X}(j_{m}))
\overto{\Gamma^{m}_{j_{1},\dotsc,j_{m}}}
\oEnd_{X}(j) 
\end{multline*}
is equal to the composite of the $\mtimes$-permutation $\pmap_{\sigma}$ on the
$\oEnd(j_{i})$'s, the composition map
$\Gamma^{m}_{j_{\sigma^{-1}(1)},\dotsc,j_{\sigma^{-1}(m)}}$, and the
right action of $\sigma_{j_{1},\dotsc,j_{m}}$ on $\oEnd_{X}(j)$
\begin{multline*}
\oEnd_{X}(m)\mtimes 
(\oEnd_{X}(j_{1})\mtimes \dotsb \mtimes \oEnd_{X}(j_{m}))\\
\overto{\id\mtimes \pmap_{\sigma}}
\oEnd_{X}(m)\mtimes 
(\oEnd_{X}(j_{\sigma^{-1}(1)})\mtimes \dotsb \mtimes \oEnd_{X}(j_{\sigma^{-1}(m)}))\\
\overto{\Gamma^{m}_{j_{\sigma^{-1}(1)},\dotsc,j_{\sigma^{-1}(m)}}}
\oEnd_{X}(j) \overto{\amap_{\sigma_{j_{1},\dotsc,j_{m}}}}\oEnd_{X}(j).
\end{multline*}
See Figure~\ref{fig:operadhardperm} on
p.~\pageref{fig:operadhardperm} for this equation written as a
diagram.

Although we did not emphasize this above, we need to allow any of $m$,
$j_{1},\dotsc,j_{m}$, or $j$ to be zero, where we understand empty
$\mtimes$-products to be the unit $\mS$.  The formulations above still
work with this extension, using the unit isomorphism where necessary.
The purpose of allowing these ``zero-ary'' operations is that it
allows us to encode a unit object into the structure: For example, in
the context of spaces $\mS$ is the one point space $*$ and to describe
the structure of a topological monoid, not only do we need the binary
operation $X\times X\to X$, but we also need the zero-ary operation
$*\to X$ for the unit.

Rewriting the properties of $\oEnd_{X}$ above as a definition, we get
an element-free version of the definition of operad of
May~\cite[1.2]{May-GILS}.\footnote{In the original definition, May
required $\oO(0)=\mS$ in order to provide $\oO$-algebras with units,
which was desirable in the iterated loop space context,
but standard convention has since dropped this requirement to allow
non-unital algebras and other unit variants.} 

\begin{defn}\label{def:operad}
An operad in a symmetric monoidal category $\mC$ consists of a sequence of
objects $\oO(m)$, $m=0,1,2,3,\dotsc$, together with:
\begin{enumerate}\deflist
\item\label{part:operad:action} A right action of the symmetric group $\Sigma_{m}$ on $\oO(m)$
for all $m$,
\item\label{part:operad:unitmap} A \textit{unit map} $1\colon \mS\to \oO(1)$, and
\item\label{part:operad:comp} A \textit{composition rule} 
\[
\Gamma^{m}_{j_{1},\dotsc,j_{m}}\colon \oO(m)\mtimes \oO(j_{1})\mtimes
\dotsb \mtimes \oO(j_{m})\to \oO(j)
\]
for every $m$, $j_{1},\dotsc,j_{m}$, where $j=j_{1}+\dotsb+j_{m}$, typically written $\Gamma$ when $m$ and
$j_{1},\dotsc,j_{m}$ are understood or irrelevant,
\end{enumerate}
satisfying the following conditions:
\begin{enumerate}
\item\label{part:operad:assoc} The composition rule $\Gamma$ is
associative in the sense that for any $m$, $j_{1},\dotsc,j_{m}$ and
$k_{1},\dotsc,k_{j}$, letting $j=j_{1}+\dotsb +j_{m}$,
$k=k_{1}+\dotsb+k_{j}$, $t_{i}=j_{1}+\dotsb+j_{i-1}$ (with $t_{1}=0$),
and $s_{i}=k_{t_{i}+1}+\dotsb+k_{t_{i}+j_{i}}$, the equation
\begin{multline*}
\hspace{3\listparindent}
\Gamma^{j}_{k_{1},\dotsc,k_{j}}\circ
(\Gamma^{m}_{j_{1},\dotsc,j_{m}}\mtimes \id_{\oO(k_{1})}\mtimes \dotsb
\mtimes \id_{\oO(k_{j})})\\
=
\Gamma^{m}_{s_{1},\dotsc,s_{m}}\circ 
(\id_{\oO(m)}\mtimes\ \Gamma^{j_{1}}_{k_{1},\dotsc,k_{j_{1}}}\mtimes \dotsb
\mtimes \Gamma^{j_{m}}_{k_{t_{m}+1},\dotsc,k_{j}})\circ c
\end{multline*}
holds in the set of maps 
\[
\oO(m)\mtimes \oO(j_{1})\mtimes \dotsb \mtimes \oO(j_{m})\mtimes
\oO(k_{1})\mtimes \dotsb \mtimes \oO(k_{j})\to \oO(k)
\]
where $c$ is the $\mtimes$-permutation
\begin{multline*}
\hspace{3\listparindent}
\oO(m)\mtimes \oO(j_{1})\mtimes \dotsb \mtimes \oO(j_{m})\mtimes
\oO(k_{1})\mtimes \dotsb \mtimes \oO(k_{j})
\to\\
\hspace{3\listparindent}
\oO(m)\mtimes 
(\oO(j_{1})\mtimes \oO(k_{1})\mtimes \dotsb \mtimes \oO(k_{j_{1}}))
\mtimes\dotsb\\
\dotsb \mtimes (\oO(j_{m})\mtimes \oO(k_{t_{m}+1})\mtimes \dotsb \mtimes
\oO(k_{j}))
\end{multline*}
that shuffles the $\oO(k_{\ell})$'s and $\oO(j_{i})$'s as displayed
(see Figure~\ref{fig:operadassoc} on p.~\pageref{fig:operadassoc} for the diagram);

\item\label{part:operad:unit} The unit map $1$ is a left and right unit for the composition
rule $\Gamma$ in the sense that $\Gamma^{1}_{m}\circ (1\mtimes \id)$
\[
\mS\mtimes \oO(m)\overto{1\mtimes \id}\oO(1)\mtimes \oO(m)\overto{\Gamma^{1}_{m}}\oO(m)
\]
is the unit isomorphism and $\Gamma^{m}_{1,\dotsc,1}\circ (\id\mtimes 1^{(m)})$
\[
\oO(m)\mtimes \mS^{(m)}
\overto{\id\mtimes 1^{(m)}}
\oO(m)\mtimes \oO(1)^{(m)}
\overto{\Gamma^{m}_{1,\dotsc,1}}\oO(m)
\]
is the iterated unit isomorphism for $\oO(m)$ for all $m$;
\item\label{part:operad:easyperm} The map $\Gamma^{m}_{j_{1},\dotsc,j_{m}}$ is
$(\Sigma_{j_{1}}\times \dotsb \times \Sigma_{j_{m}})$-equivariant for
the block sum inclusion of $\Sigma_{j_{1}}\times \dotsb \times
\Sigma_{j_{m}}$ in $\Sigma_{j}$; and
\item\label{part:operad:hardperm} For any $m$, $j_{1},\dotsc,j_{m}$ and any $\sigma \in
\Sigma_{m}$, the equation
\begin{multline*}
\hspace{3\listparindent}
\Gamma^{m}_{j_{1},\dotsc,j_{m}}\circ 
(\amap_{\sigma}\mtimes \id_{\oO(j_{1})}\mtimes \dotsb \mtimes\id_{\oO(j_{m})})
\\
=\amap_{\sigma_{j_{1},\dotsc,j_{m}}}\circ 
\Gamma^{m}_{j_{\sigma^{-1}(1)},\dotsc,j_{\sigma^{-1}(m)}}\circ 
(\id_{\oO(m)}\mtimes \pmap_{\sigma})
\end{multline*}
holds in the set of maps
\[
\oO(m)\mtimes \oO(j_{1})\mtimes \dotsb \mtimes \oO(j_{m})\to \oO(j)
\]
where $\sigma_{j_{1},\dotsc,j_{m}}$ denotes the block permutation
in $\Sigma_{j}$ corresponding to $\sigma$ on the blocks of size
$j_{1},\dotsc,j_{m}$, $\amap$ denotes the right action
of~\eqref{part:operad:action}, and $\pmap_{\sigma}$ denotes the
$\mtimes$-permutation corresponding to $\sigma$ (see
Figure~\ref{fig:operadhardperm} 
on p.~\pageref{fig:operadhardperm} for the diagram).
\end{enumerate}
\end{defn}

\begin{figure}
\hrule
\[
\xymatrix@C-4pc@R-1pc{%
&\oO(j)\mtimes \oO(k_{1})\mtimes\dotsb \mtimes\oO(k_{j})
\ar[dd]^{\Gamma^{j}_{k_{1},\dotsc,k_{j}} }\\
\oO(m)\mtimes \oO(j_{1})\mtimes \dotsb \mtimes \oO(j_{m})\mtimes
\oO(k_{1})\mtimes \dotsb \mtimes \oO(k_{j})
\ar `u[u] [ur]^-{\Gamma^{m}_{j_{1},\dotsc,j_{m}}\mtimes \id\mtimes \dotsb\mtimes \id}
\ar[dd]_{c}\\
&\oO(k)\\
\txt{{$\oO(m)\mtimes \left(\begin{aligned}
&\oO(j_{1})\mtimes \oO(k_{1})\mtimes \dotsb \mtimes \oO(k_{j_{1}})
\mtimes\dotsb\cr
&\quad\dotsb\mtimes  \oO(j_{m})\mtimes \oO(k_{t_{m}+1})\mtimes \dotsb \mtimes
\oO(k_{j})
\end{aligned}\right)$}}
\ar `d[dr] [dr]_-{\id\mtimes
\Gamma^{j_{1}}_{k_{1},\dotsc,k_{j_{1}}}\mtimes \dotsb 
\mtimes \Gamma^{j_{m}}_{k_{t_{m}+1},\dotsc,k_{j}} }\\
&\oO(m)\mtimes \oO(s_{1})\mtimes \dotsb \mtimes 
\oO(s_{m})\ar[uu]_{\Gamma^{m}_{s_{1},\dotsc,s_{m}}}
}
\]
\caption{The diagram for \ref{def:operad}.\eqref{part:operad:assoc}}
\label{fig:operadassoc}
\vskip 1ex
\begin{minipage}{.9\hsize}
Here $c$ is
the $\mtimes$-permutation that shuffles $\oO(k_{\ell})$'s past
$\oO(j_{i})$'s as displayed, $j=j_{1}+\dotsb+j_{m}$,
$t_{i}=j_{1}+\dotsb+j_{i-1}$ (with $t_{1}=0$),
$s_{i}=k_{t_{i}+1}+\dotsb+k_{t_{i}+j_{i}}$, and $k=k_{1}+\dotsb+k_{j}=s_{1}+\dotsb+s_{m}$.
\end{minipage}
\vskip 1em
\hrule
\end{figure}

\begin{figure}
\hrule
\[
\xymatrix@R-.5pc{%
\oO(m)\mtimes \oO(j_{1})\mtimes \dotsb \mtimes
\oO(j_{m})\ar[r]^{\amap_{\sigma}\mtimes\id\mtimes \dotsb \mtimes \id}
\ar[dd]_{\id\mtimes \pmap_{\sigma}}
&\oO(m)\mtimes \oO(j_{1})\mtimes \dotsb \mtimes \oO(j_{m})
\ar[d]^{\Gamma^{m}_{j_{1},\dots,j_{m}}}\\
&\oO(j)\\
\oO(m)\mtimes \oO(j_{\sigma^{-1}(1)})\mtimes \dotsb \mtimes
\oO(j_{\sigma^{-1}(m)})
\ar[r]_-{\Gamma^{m}_{j_{\sigma^{-1}(1)},\dotsc,j_{\sigma^{-1}(m)}}}
&\oO(j)\ar[u]_-{\amap_{\sigma_{j_{1},\dots,j_{m}}}}
}
\]
\caption{The diagram for \ref{def:operad}.\eqref{part:operad:hardperm}}
\label{fig:operadhardperm}
\vskip 1ex
\begin{minipage}{.9\hsize}
Here $\sigma\in \Sigma_{m}$, $\pmap_{\sigma}$ is the $\mtimes$-permutation
corresponding to $\sigma$, $\sigma_{j_{1},\dotsc,j_{m}}\in \Sigma_{j}$\break
is the block permutation performing $\sigma$ on blocks of sizes
$j_{1},\dotsc,j_{m}$,\break $j=j_{1}+\dotsb+j_{m}$, and $\amap$ denotes the
$\Sigma_{m}$ action on $\oO(m)$ and the $\Sigma_{j}$-action on
$\oO(j)$. 
\end{minipage}
\vskip 1em
\hrule
\end{figure}

A map of operads consists of a map of each object that commutes with
the structure:

\begin{defn}\label{def:opmap}
A map of operads $(\{\oO(m)\},1,\Gamma)\to (\{\oO'(m)\},1',\Gamma')$
consists of $\Sigma_{m}$-equivariant maps $\phi_{m}\colon \oO(m)\to
\oO'(m)$ for all $m$ such that 
\[
\Gamma^{\prime m}_{j_{1},\dotsc,j_{m}} \circ (\phi_{m}\mtimes \phi_{j_{1}}\mtimes \dotsb
\mtimes \phi_{j_{m}})
=
\phi_{j}\circ \Gamma^{m}_{j_{1},\dotsc,j_{m}}
\]
for all $m$, $j_{1},\dotsc,j_{m}$ and $1'=\phi_{1}\circ 1$; in commuting diagrams:
\[
\def\mystrut{\vphantom{\oO'(m)\mtimes \oO'(j_{1})\mtimes \dotsb \mtimes \oO'(j_{m})}}
\xymatrix@C+1pc{%
\oO(m)\mtimes \oO(j_{1})\mtimes \dotsb \mtimes \oO(j_{m})
\mystrut
\ar[r]^-{\Gamma^{m}_{j_{1},\dotsc,j_{m}}}
\ar[d]_{\phi_{m}\mtimes \phi_{j_{1}}\mtimes \dotsb \mtimes \phi_{j_{m}}}
&\oO(j)\ar[d]^{\phi_{j}}\\
\oO'(m)\mtimes \oO'(j_{1})\mtimes \dotsb \mtimes \oO'(j_{m})
\ar[r]_-{\Gamma^{\prime m}_{j_{1},\dotsc,j_{m}}}
&\oO'(j)
}
\qquad 
\xymatrix@C-.5pc{%
&\mS\mystrut\ar[dl]_{1}\ar[dr]^{1'}\\
\oO(1)\ar[rr]_{\phi_{1}}\mystrut&\relax\hskip-1pc&\oO'(1).
}
\]
\end{defn}

The endomorphism operad $\oEnd_{X}$ gives an example of an operad in
any closed symmetric monoidal category (for any object $X$).  Here are
some additional important examples.

\begin{example}[The identity operad]
Assume the
symmetric monoidal category $\mC$ has
an initial object $\emptyset$. If $\mtimes$ preserves the initial
object in each variable, $\emptyset\mtimes (-)\iso \emptyset\iso
(-)\mtimes \emptyset$ (which is automatic in the closed case, i.e.,
when function objects
exist), we also have the example of the \term{identity operad} $\oI$,
which has $\oI(1)=\mS$ (with $1$ the identity) and $\oI(m)$ the
initial object for $m\neq 1$; this is the initial object in the
category of operads. 
\end{example}

\begin{example}[The commutative algebra operad]
The operad $\oCom$
exists in any symmetric monoidal category:
\[
\oCom(m)=\mS
\]
for all $m$ with the trivial symmetric group actions and composition
law $\Gamma$ given by the unit isomorphism; its category of algebras
(see the next section) is isomorphic to the category of commutative
monoids for $\mtimes$ in $\mC$ (defined in terms of the usual diagrams, i.e.,
\cite[VII\S3]{MacLane-Categories} plus commutativity);
see Example~\ref{ex:comm}. 
\end{example}

\begin{example}[The associative algebra operad]\label{ex:Ass}
If $\mC$ has finite coproducts and
$\mtimes$ preserves finite coproducts in each variable, then we
also have the operad $\oAss$: 
\[
\oAss(m)=\myop\coprod\limits_{\Sigma_{m}}\mS
\]
with symmetric group action induced by the natural (right) action of
$\Sigma_{m}$ on $\Sigma_{m}$ and composition law $\Gamma$ induced by
block permutation and block sum of permutations, 
\[
\sigma \in \Sigma_{m}, \tau_{1}\in \Sigma_{j_{1}},\dotsc, \tau_{m}\in \Sigma_{j_{m}}
\mapsto
\sigma_{j_{1},\dotsc,j_{m}}\circ (\tau_{1}\oplus \dotsb \oplus \tau_{m})\in \Sigma_{j}.
\]
Its category of algebras is isomorphic to the category of monoids for
$\mtimes$ in
$\mC$; see Example~\ref{ex:mon}.
\end{example}

For operads like $\oAss$, it is often useful to work in terms of
\textit{non-symmetric operads}, which come without the permutation action.

\begin{defn}\label{def:nonsym}
A \textit{non-symmetric operad} consists of a sequence of objects
$\oO(m)$, $m=0,1,2,3,\dotsc$, together with a unit map and composition
rule as in \ref{def:operad}.\eqref{part:operad:unitmap} and
\eqref{part:operad:comp} satisfying the associativity and unit rules
of \ref{def:operad}.\eqref{part:operad:assoc}
and~\eqref{part:operad:unit}.  A map of non-symmetric operads consists
of a map of their object sequences that commutes with the unit map and
the composition rule.
\end{defn}

Forgetting the permutation action on $\oCom$ gives a non-unital operad
called $\nsAss$ that is the non-symmetric version of the operad
$\oAss$.  In general, under the finite coproduct assumption in Example~\ref{ex:Ass}, given a non-symmetric operad $\nsO$, the product
$\nsO\mtimes \oAss$ has the canonical structure of an operad; it is
the \term{operad associated to $\nsO$}.  In the category of spaces (or sets, but not in the category of abelian groups, the
category of chain complexes, or the various categories of spectra), an
operad $\oO$ comes from a non-symmetric operad exactly when it admits
a map to $\oAss$: the corresponding non-symmetric operad $\nsO$ has
$\nsO(n)$ the subobject that maps to the identity permutation summand
of $\oAss$, and there is a canonical isomorphism $\oO\iso \nsO\mtimes
\oAss$ (that depends only on the original choice of map $\oO\to
\oAss$).

\section{\texorpdfstring{$A_{\infty}$, $E_{\infty}$, and
$E_{n}$}{Ainfty, Einfty, and En} Operads}
\label{sec:ainf}

This section reviews some of the most important classes of examples of
operads in homotopy theory, the $A_{\infty}$, $E_{\infty}$, and
$E_{n}$ operads.  We concentrate on the case of (unbased) spaces, with
some notes about the appropriate definition of such operads in other
contexts.  For example, in stable homotopy theory, the unbased
suspension spectrum functor $\Sigma^{\infty}_{+}$ 
converts model $E_{n}$ operads into operads in the various modern
categories of spectra.  The universal role played by spaces in
homotopy theory typically allows for reasonable definitions of these
classes of operads in any homotopy theoretic setting.

The terminology of $A_{\infty}$ space and the basic model of an
$A_{\infty}$ operad, due to Stasheff~\cite{Stasheff-Ainfty}, preceded
the definition of operad by several years.

\begin{defn}\label{defn:Ainfty}
An $A_{\infty}$ operad in spaces is a non-symmetric operad whose $m$th
space is contractible for all $m$.
\end{defn}

Informally, an operad (with symmetries) is $A_{\infty}$ when there is
an understood isomorphism to the operad associated to some
$A_{\infty}$ operad.  The definition of $A_{\infty}$ operad usually
has a straightforward generalization to other symmetric monoidal
categories with a notion of homotopy theory: contractibility
corresponds to a weak equivalence with the unit $\mS$ of the symmetric
monoidal structure, and we should add the requirement that the
non-symmetric operad composition rule should be a weak equivalence for
all indexes (which is automatic in spaces). One wrinkle is that a
flatness condition may be needed and should be imposed to ensure that
the functor $\nsO(m)\mtimes X^{(m)}$ is weakly equivalent to $X^{(m)}$
(cf.~Section~\ref{sec:compare}); in the case of spaces,
contractibility implicitly
includes such a condition (although in spaces itself, the monoidal product
$\times$ preserves all weak equivalence in each variable).  In
symmetric spectra and orthogonal spectra, a good flatness condition is
to be homotopy equivalent to a cofibrant object; in EKMM
$S$-modules, a good flatness condition is to be homotopy
equivalent to a semi-cofibrant object
(see~\cite[\S6]{LewisMandell-MMMC}).

We have already seen an example of an $A_{\infty}$ operad: the operad
$\nsAss$ is an $A_{\infty}$ operad.  The associahedra $\oK(m)$ of
Stasheff~\cite[I.\S6]{Stasheff-Ainfty} have the structure of a
non-symmetric operad using the insertion maps [\textit{ibid.}] for the
composition rule, and 
this is an example of an $A_{\infty}$ operad. The
Boardman-Vogt little $1$-cubes (non-symmetric) operad $\nsC_{1}$
described below gives a third example.  

Next we discuss $E_{\infty}$ operads.  Recall that a free
$\Sigma_{m}$-cell complex is a space built by cells of the form
$(\Sigma_{m}\times D^{n},\Sigma_{m}\times S^{n-1})$, where $D^{n}$
denotes the unit disk in $\bR^{n}$.  The definition of $E_{\infty}$
operad asks for the constituent spaces to have the
$\Sigma_{m}$-equivariant homotopy type of a free $\Sigma_{m}$-cell
complex and the non-equivariant homotopy type of a point.

\begin{defn}\label{defn:Einfty}
An operad $\oE$ in spaces is an $E_{\infty}$ operad when for each $m$,
its $m$th space is a universal $\Sigma_{m}$ space: $\oE(m)$ has the
$\Sigma_{m}$-equivariant homotopy type of a free $\Sigma_{m}$-cell
complex and is non-equivariantly contractible.
\end{defn}

Unlike the $A_{\infty}$ case, the operad $\oCom$ is not $E_{\infty}$ as
its spaces do not have free actions.  The Barratt-Eccles operad
$\oE\Sigma$ provides an example: 

\begin{example}[The Barratt-Eccles operad]\label{ex:be}
Let $\oE\Sigma(m)$ denote the
nerve of the category $E\Sigma_{m}$ whose set of objects is
$\Sigma_{m}$ and which has a unique map between any two objects. The
symmetric group $\Sigma_{m}$ acts strictly on the category and the
nerve $\oE\Sigma(m)$ inherits a $\Sigma_{m}$-action; moreover, as the
action of $\Sigma_{m}$ on the simplices is free, the simplicial
triangulation of $\oE\Sigma(m)$ has the structure of a free $\Sigma_{m}$-cell
complex.  It is non-equivariantly contractible because every object of
$E\Sigma_{m}$ is a zero object.  The multiplication is induced by an
operad structure on the sequence of categories using block sums of
permutations as in the operad 
structure on $\oAss$.  The resulting operad is called the
\term{Barratt-Eccles operad}.
\end{example}

Boardman and Vogt~\cite[\S2]{BV-Bull} defined another $E_{\infty}$
operad, built out of linear isometries.

\begin{example}[The linear isometries operad]\label{ex:L}
The Boardman-Vogt linear isometries
operad $\oL$ has its $m$th space the space of
linear isometries 
\[
(\bR^{\infty})^{m}=\bR^{\infty}\oplus\dotsb \oplus \bR^{\infty}\to \bR^{\infty}
\]
(where $\bR^{\infty}=\bigcup \bR^{n}$), with operad structure defined
as in the example of an endomorphism operad.  The topology comes from
the identification
\[
\oL(m)=\lim\nolimits_{k} \colim_{n} \oI((\bR^{k})^{m},\bR^{n})
\]
for $\oI((\bR^{k})^{m},\bR^{n})$ the space of linear isometries
$\bR^{k})^{m}\to \bR^{n}$ (with
the usual manifold topology).  The $\Sigma_{m}$-action induced by
the action on the direct sum $(\bR^{\infty})^{m}$ is clearly free;
each $\oI((\bR^{k})^{m},\bR^{n})$ is a $\Sigma_{m}$-manifold, and
$\oL(m)$ is homotopy equivalent to a free $\Sigma_{m}$-cell complex.
Since $\oI((\bR^{k})^{m},\bR^{n})$ is $(n-km-1)$-connected, it follows
that $\oL(m)$ is non-equivariantly contractible.
\end{example}

The Boardman-Vogt little $\infty$-cubes operad $\oC_{\infty}$ described below gives a
third example of an $E_{\infty}$ operad.

The requirement for freeness derives from infinite loop space theory.
As we review in Section~\ref{sec:loop}, infinite loop spaces are
algebras for the little $\infty$-cubes operad $\oC_{\infty}$.  As we
review in Section~\ref{sec:compare}, for any $E_{\infty}$ operad $\oE$
in spaces, the category of $\oE$-algebras has an equivalent homotopy theory to the category
of $\oC_{\infty}$-algebras.  On the other
hand, any algebra in spaces for the operad $\oCom$ must be a
generalized Eilenberg-Mac~Lane space, and the category of
$\oCom$-algebras does not have an equivalent homotopy theory to the
category of $\oC_{\infty}$-algebras.  In generalizing
the notion of $E_{\infty}$ to other categories, getting the right
category of algebras is key.  For symmetric spectra, orthogonal
spectra, and EKMM $S$-modules and for chain complexes of modules
over a ring containing the rational numbers, it is harmless to allow
$\oCom$ to fit the definition of $E_{\infty}$ operad
(cf.~Examples~\ref{ex:dmeq2}, \ref{ex:dmeq3});
in spaces and chain complexes of modules over a finite field, some
freeness condition is required.  In general, the condition should be a
flatness condition on $\oO(m)$ for $(\oO(m)\mtimes
X^{(m)})/\Sigma_{m}$ as a functor of $X$ (for suitable $X$)
(cf.~Definition~\ref{def:dmeq}).

Unlike the definition of $E_{\infty}$ or $A_{\infty}$ operad, which
are defined in terms of homotopical conditions on the constituent
spaces, the definition of $E_{n}$ operads for other $n$ depends on specific
model operads first defined by Boardman-Vogt~\cite{BV-Bull} 
called the \textit{little $n$-cubes operads} $\oC_{n}$.

\begin{cons}[The little $n$-cubes operad]\label{cons:lilcubes}
The $m$th space $\oC_{n}(m)$ of the little $n$-cubes operad
is the space of $m$ ordered almost disjoint parallel axis affine
embeddings of the unit $n$-cube $[0,1]^{n}$ in itself.  So
$\oC_{n}(0)$ is a single point representing the unique way to embed
$0$ unit $n$-cubes in the unit $n$-cube.  A parallel axis affine
embedding of the unit cube in itself is a map of the form
\[
(t_{1},\dotsc,t_{n})\in [0,1]^{n}\mapsto (x_{1}+a_{1}t_{1},\dotsc,x_{n}+a_{n}t_{n})\in [0,1]^{n}
\]
for some fixed $(x_{1},\dotsc,x_{n})$ and $(a_{1},\dotsc,a_{n})$ with
each $a_{i}>0$, $x_{i}\geq 0$, and $x_{i}+a_{i}\leq 1$; it is
determined by the point $(x_{1},\dotsc,x_{n})$ 
where it sends $(0,\dotsc,0)$ and the point 
\[
(y_{1},\dotsc,y_{n})=(x_{1}+a_{1},\dotsc,x_{n}+a_{n})
\]
where it sends $(1,\dotsc,1)$.  So 
$\oC_{n}(1)$ is homeomorphic to the subspace
\[
\{ ((x_{1},\dotsc,x_{n}),(y_{1},\dotsc,y_{n}))\in [0,1]^{n}\times
[0,1]^{n}\mid x_{1}<y_{1}, x_{2}<y_{2}, \dotsc, x_{n}<y_{n}\}
\]
of $[0,1]^{n}\times [0,1]^{n}$.  For $m\geq 2$, almost disjoint means
that the images of the open subcubes are disjoint (the embedded cubes
only intersect on their boundaries), and $\oC_{n}(m)$ is homeomorphic
to a subset of $\oC_{n}(1)^{m}$.  The map $1$ is specified by the
element of $\oC_{n}(1)$ that gives the identity embedding of the unit
$n$-cube.  The action of the symmetric group is to re-order the
embeddings.  The composition law $\Gamma^{m}_{j_{1},\dotsc,j_{m}}$
composes the $j_{1}$ embeddings in $\oC_{n}(j_{1})$ with the first
embedding in $\oC_{n}(m)$, the $j_{2}$ embeddings in $\oC_{n}(j_{2})$
with the second embedding in $\oC_{n}(m)$, etc., to give
$j=j_{1}+\dotsb+j_{m}$ total embeddings.  See
Figure~\ref{fig:lilcubes} for a picture in the case $n=2$.  Taking
cartesian product with the identity map on $[0,1]$ takes a
self-embedding of the unit $n$-cube to a self-embedding of the unit
$(n+1)$-cube and induces maps of operads $\oC_{n}\to \oC_{n+1}$ that
are closed inclusions of the underlying spaces.  Let
$\oC_{\infty}(m)=\bigcup \oC_{n}(m)$; the operad structures on the
$\oC_{n}$ fit together to define an operad structure on $\oC_{\infty}$.
\end{cons}

\begin{figure}
\hrule
\[
\vcenter{\hbox{\includegraphics[width=.20\hsize]{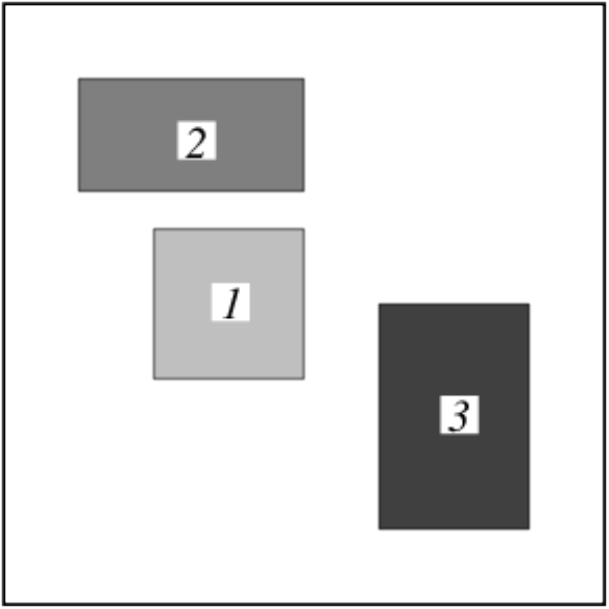}}
\hbox to .20\hsize{\hfil $a$ \hfil}}
\qquad\qquad 
\vcenter{\hbox{\includegraphics[width=.20\hsize]{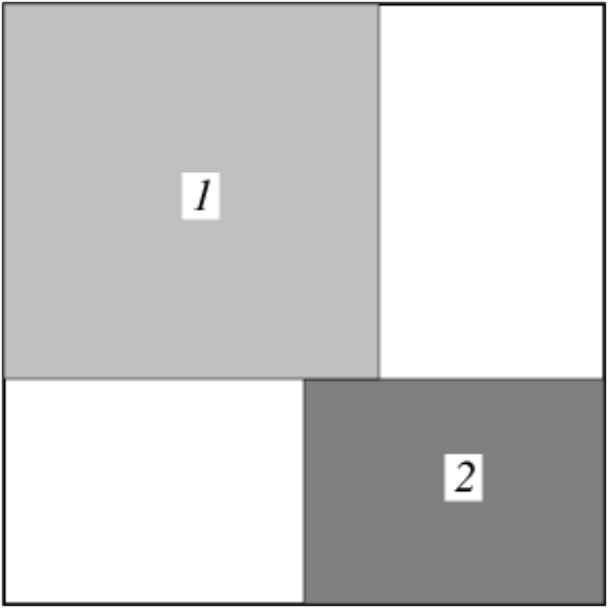}}
\hbox to .20\hsize{\hfil $b$ \hfil}}
\qquad\qquad 
\vcenter{\hbox{\includegraphics[width=.20\hsize]{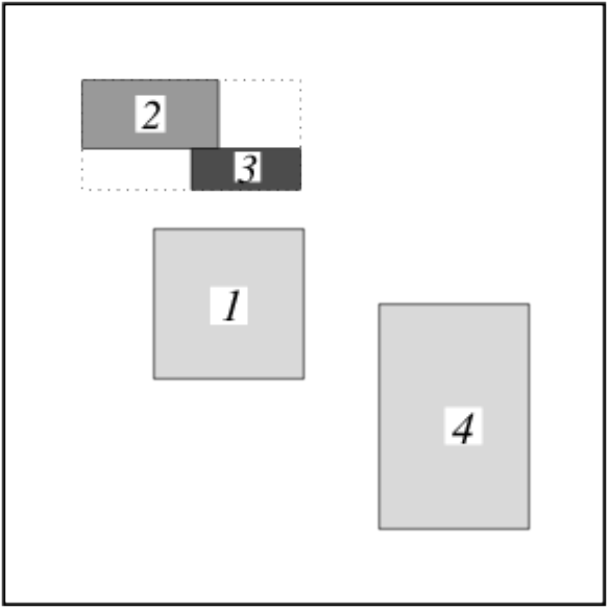}}
\hbox to .20\hsize{\hfil $\Gamma^{3}_{1,2,1}(a;1,b,1)$ \hfil}}
\]
\caption{Composition of Little 2-Cubes}
\label{fig:lilcubes}
\vskip 1ex
\begin{minipage}{.9\hsize}
The figure shows the composition 
\[
\Gamma^{3}_{1,2,1}\colon
\oC_{2}(3)\times \oC_{2}(1)\times \oC_{2}(2)\times \oC_{2}(1)\to \oC_{2}(4)
\]
applied to the elements $a\in \oC_{2}(3)$, $1\in
\oC_{2}(1)$, $b\in \oC_{2}(2)$, and $1\in \oC_{2}(1)$ with $a$ and $b$
as pictured.
\end{minipage}
\vskip 1em
\hrule
\end{figure}

The space $\oC_{n}(m)$ has the $\Sigma_{m}$-equivariant homotopy
type of the configuration space $C(m,\bR^{n})$ of $m$ (ordered) points
in $\bR^{n}$, or equivalently, $C(m,(0,1)^{n})$ of $m$ points in
$(0,1)^{n}$. To see this, since both spaces are free $\Sigma_{m}$-manifolds
(non-compact, and with boundary in the case of $\oC_{n}(m)$), it
is enough to show that they are non-equivariantly weakly equivalent, but
it is in fact no harder to produce a $\Sigma_{m}$-equivariant homotopy
equivalence explicitly. We have a $\Sigma_{m}$-equivariant map
$\oC_{n}(m)\to C(m,(0,1)^{n})$ by taking the center point of each
embedded subcube.  It is easy to define a $\Sigma_{m}$-equivariant
section of this map by continuously choosing cubes centered on the
given configuration; one way to do this is to make them all have the
same equal side length of $1/2$ of the minimum of the distance between
each of the points and the distance from each point to the boundary of
$[0,1]^{n}$.  A $\Sigma_{m}$-equivariant homotopy from the composite
map on $\oC_{n}(m)$ to the identity could (for example) first linearly
shrink all sides that are bigger than their original length and then
linearly expand all remaining sides to their original length.  In
particular, $\oC_{n}(1)$ is always contractible and $\oC_{n}(2)$ is
$\Sigma_{2}$-equivariantly homotopy equivalent to the sphere $S^{n-1}$
with the antipodal action.  For $m>2$, the configuration spaces can be
described in terms of iterated fibrations, and their Borel homology
was calculated by Cohen in~\cite{Cohen-Bulletin}
and~\cite[IV]{CohenLadaMay}.  

We can say more about the homotopy types in the cases $n=1$, $n=2$,
and $n=\infty$.  For $n=1$, the natural order of the interval $[0,1]$,
gives a natural order to the embedded sub-intervals ($1$-cubes); let
$\nsC_{1}(m)$ denote the subspace of $\oC_{1}(m)$ where the
sub-intervals are numbered in their natural order.  The spaces
$\nsC_{1}(m)$ are contractible and form a non-symmetric operad with
$\oC_{1}$ (canonically) isomorphic to the associated operad.  In other
words, the map of operads $\oC_{1}\to \oAss$ that takes a sequence of
embeddings and just remembers the order they come in is a
$\Sigma_{m}$-equivariant homotopy equivalence at each level.  In
particular $\oC_{1}$ is an $A_{\infty}$ operad.

For $n=2$, the configuration space $C(m,\bR^{2})$ is easily seen to be
an Eilenberg-Mac~Lane space $K(A_{m},1)$ where $A_{m}$ is the pure
braid group (of braids with fixed endpoints) on $m$ strands (see, for
example, \cite[\S 4]{May-GILS}).  

For $n=\infty$, $\oC_{\infty}$ is an $E_{\infty}$ operad; each $\oC_{\infty}(m)$ is a universal
$\Sigma_{m}$-space.  To see this, it is easier to work with
\[
C(m,\bR^{\infty}):=\bigcup C(m,\bR^{n}).
\]
Choosing a homeomorphism $(0,1)\iso \bR$ that sends $1/2$ to $0$, the
induced homeomorphisms
$\oC_{n}(m)\to C(m,\bR^{n})$ are compatible with the inclusions
$\oC_{n}(m)\to \oC_{n+1}(m)$ and $C(m,\bR^{n})\to C(m,\bR^{n+1})$; as
these inclusions are embeddings of closed submanifolds (with boundary
in the case of $\oC_{n}(m)$), the induced map 
\[
\oC_{\infty}(m)=\bigcup \oC_{n}(m)\to \bigcup C(m,\bR^{n})=C(m,\bR^{\infty})
\]
remains a homotopy equivalence.  One way to see that $C(m,\bR^{\infty})$ is
non-equivariantly contractible is to start by choosing a homotopy though injective
linear maps from the identity on $\bR^{\infty}$ to the shift map that
on basis elements sends $e_{i}$ to $e_{i+m}$.  We then homotope the
configuration (which now starts with the first $m$ coordinates all
zero) so that the $i$th point has $i$th coordinate $1$ and the
remainder of the first $m$ coordinates zero.  Finally, we homotope the
configuration to the configuration with $i$th point at $e_{i}$.  

We use the operads $\oC_{n}$ to define $E_{n}$ operads:

\begin{defn}\label{def:En}
An operad $\oE$ in spaces is an $E_{n}$ operad when there is a zigzag
of maps of operads relating it to $\oC_{n}$, each of which is a
$\Sigma_{m}$-equivariant homotopy equivalence on $m$th spaces for
all $m$.
\end{defn}

This definition is standard, but a bit awkward, because it defines a
property, whereas a better definition would define a structure and ask
for at least a preferred equivalence class of zigzag.

As we review in Section~\ref{sec:compare}, such maps induce
equivalences of homotopy categories of algebras (indeed, Quillen
equivalences).    We have implicitly given two different definitions of
$E_{\infty}$ operad; the following proposition justifies this.

\begin{prop}
An operad $\oE$ of spaces is $E_{\infty}$ in the sense of
Definition~\ref{defn:Einfty} if and only if it is $E_{\infty}$ in the
sense of Definition~\ref{def:En}.
\end{prop}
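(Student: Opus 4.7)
The plan is to deduce both directions from a product operad construction together with the equivariant Whitehead theorem. The implication Definition~\ref{def:En} $\Rightarrow$ Definition~\ref{defn:Einfty} is almost immediate: being a universal $\Sigma_m$-space is invariant under $\Sigma_m$-equivariant homotopy equivalence, and the preceding discussion has already verified that each $\oC_\infty(m)$ is a universal $\Sigma_m$-space. A zigzag of level-wise $\Sigma_m$-equivariant homotopy equivalences relating $\oE$ to $\oC_\infty$ therefore transfers this property to each $\oE(m)$.

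For the converse, assume $\oE$ satisfies Definition~\ref{defn:Einfty}. Form the product operad $\oE\times \oC_\infty$, whose $m$th space is $\oE(m)\times \oC_\infty(m)$ with diagonal $\Sigma_m$-action and coordinatewise composition and unit maps; the two projections define maps of operads fitting into a zigzag $\oE \leftarrow \oE\times \oC_\infty \rightarrow \oC_\infty$. It suffices to show that each projection is a $\Sigma_m$-equivariant homotopy equivalence at every level, since this is exactly the zigzag required by Definition~\ref{def:En}.

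The key step is this level-wise claim. Because $\oC_\infty(m)$ is non-equivariantly contractible, the projection $\oE(m)\times \oC_\infty(m)\to \oE(m)$ is a non-equivariant homotopy equivalence, and the other projection is handled symmetrically. Each source and target is $\Sigma_m$-equivariantly homotopy equivalent to a free $\Sigma_m$-cell complex, since a product of any $\Sigma_m$-space with a space carrying a free $\Sigma_m$-action is again a free $\Sigma_m$-space. The equivariant Whitehead theorem then says that a $\Sigma_m$-equivariant map between spaces of the $\Sigma_m$-equivariant homotopy type of a free $\Sigma_m$-cell complex is a $\Sigma_m$-equivariant homotopy equivalence if and only if it induces a weak equivalence on $H$-fixed points for every subgroup $H\leq \Sigma_m$; for a free action these fixed point sets are empty unless $H$ is trivial, so the criterion reduces to the non-equivariant statement already established. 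The main obstacle is therefore purely the appeal to equivariant Whitehead to upgrade a non-equivariant homotopy equivalence on free $\Sigma_m$-spaces into an equivariant one; verifying that $\oE\times \oC_\infty$ is an operad and that both projections are operad maps is routine from the universal property of products.
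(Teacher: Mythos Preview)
Your proof is correct and follows essentially the same approach as the paper: the easy direction is that the universal $\Sigma_m$-space property is preserved under the zigzags of Definition~\ref{def:En}, and the other direction is handled by the product operad $\oE\times\oC_\infty$ with its two projections. The paper simply asserts that the projections give the required zigzag without further comment; your appeal to the equivariant Whitehead theorem to justify that a non-equivariant equivalence between free $\Sigma_m$-spaces of equivariant CW type is a $\Sigma_m$-equivariant homotopy equivalence is a correct (if slightly heavy) way to fill in that step.
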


Before reviewing the proof, we state the following closely related proposition.

\begin{prop}\label{prop:E1}
An operad $\oE$ of spaces is $E_{1}$ if and only if it is isomorphic
to the associated operad of an $A_{\infty}$ operad.
\end{prop}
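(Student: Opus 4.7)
The plan is to leverage the characterization from the end of Section~\ref{sec:def1}: in spaces, an operad $\oO$ arises as the associated operad of a non-symmetric operad exactly when it admits a map to $\oAss$, in which case $\oO \iso \nsO \mtimes \oAss$ where $\nsO(m)$ is the preimage of the identity permutation summand of $\oAss(m) = \Sigma_m$. Combined with the already-noted isomorphism $\oC_{1} \iso \nsC_{1} \mtimes \oAss$ (with $\nsC_{1}$ levelwise contractible), this reduces the proposition to producing an appropriate zigzag of operads on one side and identifying connected components on the other.

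For the ``if'' direction, suppose $\oE \iso \nsA \mtimes \oAss$ for some $A_{\infty}$ operad $\nsA$. I would introduce the product non-symmetric operad $\nsA \times \nsC_{1}$, which is again $A_{\infty}$; its two projections to $\nsA$ and $\nsC_{1}$ are levelwise homotopy equivalences between contractible spaces. Applying $(-) \mtimes \oAss$ and using $\oC_{1} \iso \nsC_{1} \mtimes \oAss$ produces a zigzag of operad maps
\[
\oE \iso \nsA \mtimes \oAss \longleftarrow (\nsA \times \nsC_{1}) \mtimes \oAss \longrightarrow \nsC_{1} \mtimes \oAss \iso \oC_{1}.
\]
At level $m$ each arrow is a product of a homotopy equivalence with $\id_{\Sigma_m}$; since the $\Sigma_m$-action lives entirely in the $\Sigma_m$ factor, each arrow is automatically a $\Sigma_m$-equivariant homotopy equivalence, showing that $\oE$ is $E_{1}$.

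For the ``only if'' direction, suppose $\oE$ is $E_{1}$, connected to $\oC_{1}$ by a zigzag of operad maps that are levelwise $\Sigma_m$-equivariant homotopy equivalences. Applying $\pi_0$ levelwise---which is product-preserving and sends homotopy equivalences to $\Sigma_m$-equivariant bijections---produces a zigzag of operads of sets in which every map is an isomorphism. Since the map $\oC_{1} \to \oAss$ remembering the order of embedded intervals is itself a levelwise $\Sigma_m$-equivariant homotopy equivalence (noted in the discussion before Definition~\ref{def:En}), we get $\pi_0 \oC_{1} \iso \oAss$, and hence an isomorphism of operads $\pi_0 \oE \iso \oAss$. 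The composite $\oE \to \pi_0 \oE \iso \oAss$ is the required map of operads, and the Section~\ref{sec:def1} characterization now gives $\oE \iso \nsE \mtimes \oAss$ for some non-symmetric operad $\nsE$.

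The main obstacle is to verify that each $\nsE(m)$ is contractible. Freeness of a group action is preserved under equivariant homotopy equivalence (if $f \colon X \to Y$ is such an equivalence with equivariant inverse $h$, then $g \in \mathrm{Stab}(y)$ forces $g \in \mathrm{Stab}(h(y))$), so from the manifestly free action of $\Sigma_m$ on $\oC_{1}(m)$ I conclude that $\Sigma_m$ acts freely on $\oE(m)$. Nonequivariantly $\oE(m) \simeq \oC_{1}(m) \simeq \bigsqcup_{\Sigma_m} *$, so $\oE(m)$ has $m!$ contractible path components permuted freely and transitively by $\Sigma_m$. Since $\nsE(m)$ is the preimage of the identity under $\oE(m) \to \pi_0 \oE(m) \iso \Sigma_m$, it is exactly one of these contractible components, completing the proof.
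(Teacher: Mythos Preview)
Your proof is correct and follows the same approach as the paper. Your ``if'' direction is exactly the paper's argument: form the product $\nsE\times\nsC_{1}$ of non-symmetric operads and pass to associated operads to obtain the required zigzag. For the ``only if'' direction the paper simply declares it ``clear'' that the property of being (isomorphic to) the associated operad of an $A_{\infty}$ operad is preserved under the zigzags of Definition~\ref{def:En}; your $\pi_{0}$ argument producing the map $\oE\to\oAss$ and your component analysis showing $\nsE(m)$ is contractible are precisely what is needed to justify that claim, so you have supplied the details the paper omits rather than taken a different route. (One minor remark: once you have the operad isomorphism $\pi_{0}\oE\iso\oAss$, freeness of the $\Sigma_{m}$-action on $\pi_{0}\oE(m)$ is automatic, so the separate freeness argument, while correct, is not strictly needed.)
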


The previous two propositions (and their common proof) are the gist of
the second half of \S3 of May~\cite{May-GILS}.  In each case one direction
is clear, since $\oC_{1}$ and $\oC_{\infty}$ are $A_{\infty}$ and
$E_{\infty}$ (respectively), and the conditions of
Definitions~\ref{defn:Ainfty} and~\ref{defn:Einfty} are preserved by
the zigzags considered in Definition~\ref{def:En}.  The proof of the
other direction is to exhibit an explicit zigzag:

\begin{proof}
Let $\oE$ be the operad in question and assume it is either
$E_{\infty}$ in the sense of Definition~\ref{defn:Einfty} (for the
first proposition) or $A_{\infty}$ in the sense of
Definition~\ref{defn:Ainfty}ff (for the second proposition).  In the
case of the first proposition, consider the product in the category
of operads $\oC_{\infty}\times \oE$; it satisfies
\[
(\oC_{\infty}\times \oE)(m)=\oC_{\infty}(m)\times \oE(m)
\]
with the diagonal $\Sigma_{m}$-action and the unit and composition maps
the product of those for $\oC_{\infty}$ and $\oE$.  The projections
\[
\oC_{\infty}\from \oC_{\infty}\times \oE\to \oE
\]
give a zigzag as required by Definition~\ref{def:En}.  For the second
proposition, do the same trick with the non-symmetric operads $\nsE$
and $\nsC_{1}$ and then pass to the associated operads.
\end{proof}

Definitions~\ref{defn:Ainfty} and~\ref{defn:Einfty} mean that
identifying $A_{\infty}$ and $E_{\infty}$ operads is pretty
straightforward.  In unpublished work,
Fiedorowicz~\cite{Fiedorowicz-BraidedOperad} defines 
the notion of a \term{braided operad}, which
provides a good criterion for identifying $E_{2}$ operads. For $n>2$
(finite), the spaces $\oC_{n}(m)$ are 
not Eilenberg-Mac\ Lane spaces (for $m>1$), and that makes
identification of such operads much harder; however,
Berger~\cite[1.16]{Berger-CombConfig} proves a theorem (that he
attributes to Fiedorowicz) that gives a method to identify
$E_{n}$ operads that seems to work well in practice;
see~\cite[\S14]{McClureSmith-CosimplicialCubes},
\cite[\S1.6]{BergerFresse-Combinatorial}. 

The work of Dunn~\cite{Dunn-Tensor} and Fiedorowicz-Vogt~\cite{FV-Add}
is the start of an abstract identification of $E_{n}$ operads: The
derived tensor product of $n$ $E_{1}$ operads is an $E_{n}$ operad.
Here ``tensor product'' refers to the Boardman-Vogt tensor product of
operads (or PROPs) in \cite[2\S3]{BV-PROP}, which is the universal pairing
subject to ``interchange'', meaning that an $\oO\otimes \oP$-algebra
structure consists of an $\oO$-algebra and a $\oP$-algebra structure
on a space where the $\oO$- and $\oP$-structure maps commute (see
\textit{ibid.}\ for more details on the construction of the tensor
product).  This still essentially defines $E_{n}$ operads in terms of
reference models, though in principle, it gives a wide range of
additional models.  (The author does not know an example where this is
actually put to use, but \cite{BFV-THH} comes close.)  The concept of
interchange makes sense in any cartesian symmetric monoidal structure,
so this also in principle tells how to extend the notion of $E_{n}$ to
other cartesian symmetric monoidal categories with a reasonable
homotopy theory of operads for which the Boardman-Vogt tensor product
is reasonably well-behaved.  (Again, the author knows no examples
where this is put to use, but perhaps work by Barwick (unpublished),
Gepner (unpublished), and Lurie~\cite{Lurie-DAGVI} on $E_{n}$
structures is in a similar spirit.)

In categories suitably related to spaces, 
$E_{n}$ algebras are defined by a reference model suitably
related to $\oC_{n}$.  For example, in the context of simplicial sets, the total
singular complex of the little $n$-cubes operad has the canonical
structure of an operad of simplicial sets, and we define $E_{n}$
operads in terms of this reference model.  In 
symmetric spectra and orthogonal spectra, we have the reference model
given by the unbased suspension spectrum functor: an operad is an
$E_{n}$ operad when it is related to $\Sigma^{\infty}_{+}\oC_{n}$ by a
zigzag of operad maps that are (non-equivariant) weak equivalences on
$m$th objects for all $m$.  
For categories of chain complexes, we
use the singular chain complex of the little $n$-cubes operad to
define the reference model.  To make the singular chains an operad, we use
the Eilenberg-Mac Lane shuffle map to relate tensor product of chains
to chains on the cartesian product; the shuffle map is a lax symmetric
monoidal natural transformation
\[
C_{*}(X)\otimes C_{*}(Y)\to C_{*}(X\times Y),
\]
meaning that it commutes strictly with the
symmetry isomorphisms 
\[
C_{*}(X)\otimes C_{*}(Y)\iso C_{*}(Y)\otimes C_{*}(X) \qquad
C_{*}(X\times Y) \iso C_{*}(Y\times X)
\]
and makes the following associativity diagram commute.
\[
\xymatrix{%
C_{*}(X)\otimes C_{*}(Y)\otimes C_{*}(Z)\ar[r]\ar[d]
&C_{*}(X\times Y)\otimes C_{*}(Z)\ar[d]\\
C_{*}(X)\otimes C_{*}(Y\times Z)\ar[r]&C_{*}(X\times Y\times Z)
}
\]
See, for example, \cite[\S29]{May-Simplicial}.  

The fact that $E_{n}$ operads need to be defined in terms of a
reference model is not entirely satisfactory, especially in
homotopical contexts that are not topological.  Nevertheless, the
definition for spaces, simplicial sets, or chain complexes seems to
suffice to cover all other contexts that arise in
practice.\footnote{In theory, the definition for simplicial sets
should suffice for all homotopical contexts, but this may require
changing models, which for a particular problem may be inconvenient
or more complicated, or make it less concrete.}

\section{Operadic Algebras and Monads}
\label{sec:monad}

In the original context of iterated loop spaces and in many current
contexts in homotopy theory and beyond, the main purpose of
operads is to parametrize operations, which is to say, to define
operadic algebras.  For a closed symmetric monoidal category, there
are three equivalent definitions, one in terms of operations, one in
terms of endomorphism operads, and one in terms of monads.  This
section reviews the three definitions.

Viewing $\oO(m)$ as parametrizing some $m$-ary operations on an object
$X$ means that we have an \textit{action map}
\[
\oO(m)\mtimes X^{(m)}\to X.
\]
Since the right action of $\Sigma_{m}$ on $\oO(m)$ corresponds to
reordering the arguments of the operations, applying $\sigma \in
\Sigma_{m}$ to $\oO(m)$ (and then performing the action map) should
have the same effect as applying $\sigma$ to permute the factors in
$X^{(m)}$.  A concise way of saying this is to say that the map is
equivariant for the diagonal (left) action on the source $\oO(m)\mtimes X^{(m)}$
and the trivial action on the target $X$ (using the standard convention that the
left action $\sigma$ on $\oO(m)$ is given by the right action of
$\sigma^{-1}$).  The action map should also respect the composition law
$\Gamma$, making $\Gamma$ correspond to composition of operations, and
respect the identity $1$, making $1$ act by the identity operation.
The following gives the precise definition:

\begin{defn}\label{def:algebra}
Let $\mC$ be a symmetric monoidal category and
$\oO=(\{\oO(m)\},\Gamma,1)$ an operad in $\mC$.  An $\oO$-algebra (in
$\mC$) consists of an object $A$ in $\mC$ together with \textit{action
maps}
\[
\xi_{m}\colon \oO(m)\mtimes A^{(m)}\to A
\]
that are equivariant for the diagonal (left) $\Sigma_{m}$-action on
the source and the trivial $\Sigma_{m}$-action on the target and that
satisfy the following associativity and unit conditions:
\begin{enumerate}
\item For all $m$, $j_{1},\dotsc,j_{m}$, 
\[
\xi_{m}\circ (\id_{\oO(m)}\mtimes \xi_{j_{1}}\mtimes \dotsb \mtimes
\xi_{j_{m}})=\xi_{j}\circ (\Gamma^{m}_{j_{1},\dotsc,j_{m}}\mtimes
\id_{A}^{(j)}),
\]
i.e., the diagram
\[
\xymatrix@C+3pc{%
\oO(m)\mtimes \oO(j_{1})\mtimes \dotsb \mtimes \oO(j_{m})\mtimes A^{(j)}
\ar[r]^-{\Gamma^{m}_{j_{1},\dotsc,j_{m}}\mtimes \id_{A}^{(j)}}
\ar[d]_{\id_{\oO(m)}\mtimes \xi_{j_{1}}\mtimes \dotsb \mtimes \xi_{j_{m}}}
&\oO(j)\mtimes A^{(j)}\ar[d]^{\xi_{j}}\\
\oO(m)\mtimes A^{(m)}\ar[r]_{\xi_{m}}
&A
}
\]
commutes.

\item The map $\xi_{1}\circ (1\mtimes \id_{A})\colon \mS\mtimes A\to A$ is the unit
isomorphism for $\mtimes$.

\end{enumerate}
A map of $\oO$-algebras from $(A,\{\xi_{m}\})$ to $(A',\{\xi'_{m}\})$
consists of a map $f\colon A\to A'$ in $\mC$ that commutes with the
action maps, i.e., that make the diagrams
\[
\xymatrix{%
\oO(m)\mtimes A^{(m)}
  \ar[r]^-{\xi_{m}}
  \ar[d]_{\id_{\oO(m)}\mtimes f^{(m)}}
&A\ar[d]^{f}\\
\oO(m)\mtimes A^{\prime(m)}\ar[r]_-{\xi'_{m}}
&A'
}
\]
commute for all $m$.  We write $\mC[\oO]$ for the category of
$\oO$-algebras. 
\end{defn}

\begin{example}
When $\mC$ has an initial object and $\mtimes$ preserves the initial
object in each variable, the structure of an algebra over the identity
operad $\oI$ is no extra structure on an object of $\mC$.
\end{example}

As per~(ii) above and as illustrated in the previous example, the $1$
in the structure of the operad corresponds to the identity operation.
In some contexts algebras have units; when 
that happens, the unit is encoded in $\oO(0)$ as in the examples of
monoids and commutative monoids.  Recall that a \term{monoid object
for $\mtimes$ in $\mC$}
(or \term{$\mtimes$-monoid} for short)
consists of an object $M$ together with a \textit{multiplication map}
$\mu \colon M\mtimes M\to M$ and \textit{unit map} $\eta \colon \mS\to
M$ satisfying the following associativity and unit diagrams
\[
\xymatrix{%
M\mtimes M\mtimes M\mathstrut
\ar[r]^-{\mu \mtimes \id}\ar[d]_{\id\mtimes \mu}
&M\mtimes M\ar[d]^{\mu}\\
M\mtimes M\mathstrut\ar[r]_-{\mu}&M
}\qquad 
\xymatrix{%
\mS\mtimes M\mathstrut\ar[r]^-{\eta \mtimes \id}\ar[dr]_{\iso}
&M\mtimes M\ar[d]_{\mu}
&M\mtimes \mS\ar[l]_-{\id\mtimes \eta}\ar[dl]^{\iso}\\
&M
}
\]
(where the diagonal maps are the unit isomorphisms in $\mC$).  The
\textit{opposite multiplication} is the composite of the symmetry
morphism $\pmap\colon M\mtimes M\to M\mtimes M$ with $\mu$, and a
$\mtimes$-monoid is \textit{commutative} when $\mu=\mu \circ \pmap$.

\begin{example}\label{ex:comm}
Given a $\oCom$-algebra $A$, defining $\eta$ to be the action map $\xi_{0}$
\[
\eta \colon \mS=\oCom(0)\overto{\xi_{0}} A
\]
and $\mu$ to be the composite of the (inverse) unit isomorphism and
the action map $\xi_{2}$
\[
\mu \colon  A\mtimes A \iso \oCom(2)\mtimes A\mtimes A\overto{\xi_{2}}A
\]
endows $A$ with the structure of a commutative $\mtimes$-monoid: associativity
follows from the fact that the maps $\Gamma^{2}_{1,2}$ and
$\Gamma^{2}_{2,1}$ are both unit maps for $\mtimes$ so under the
canonical isomorphisms
\begin{align*}
A\mtimes A\mtimes A
&\iso
\oCom(2)\mtimes (\oCom(1)\mtimes \oCom(2))
\mtimes (A\mtimes A\mtimes A)\\
A\mtimes A\mtimes A
&\iso
\oCom(2)\mtimes (\oCom(2)\mtimes \oCom(1))
\mtimes (A\mtimes A\mtimes A)
\end{align*}
both maps induce the same map $A\mtimes A\mtimes A\to A$. Likewise,
the unit condition follows from the fact that 
\begin{align*}
\Gamma^{2}_{0,1}&\colon \oCom(2)\mtimes (\oCom(0)\mtimes \oCom(1))\to \oCom(1)\\
\Gamma^{2}_{1,0}&\colon \oCom(2)\mtimes (\oCom(1)\mtimes \oCom(0))\to \oCom(1)
\end{align*}
are both unit maps.  The multiplication is commutative because the
action of the symmetry map on $\mS=\oCom(2)$ is trivial.
Conversely, we can convert a commutative $\mtimes$-monoid to a $\oCom$-algebra
by taking $\xi_{0}$ to be the unit $\eta$, $\xi_{1}$ to be the unit
isomorphism for $\mtimes$, $\xi_{2}$ to be induced by the unit
isomorphism for $\mtimes$ and the multiplication, and all higher
$\xi_{m}$'s induced by the unit isomorphism for $\mtimes$ and (any)
iterated multiplication.  This defines a bijective correspondence
between the set of commutative $\mtimes$-monoid structures and the set of
$\oCom$-algebra structures on a fixed object and an isomorphism
between the category of commutative $\mtimes$-monoids and the category of
$\oCom$-algebras. 
\end{example}

For a non-symmetric operad, defining an algebra in terms of the
associated operad or in terms of the analogue of
Definition~\ref{def:algebra} without the equivariance requirement
produce the same structure.  

\begin{example}\label{ex:mon}
The constructions of Example~\ref{ex:comm} applied to the
non-symmetric operad $\nsAss$ give a bijective correspondence between 
the set of $\mtimes$-monoid structures and the set of $\oAss$-algebra structures
on a fixed object and an isomorphism between the category of
$\mtimes$-monoids and the category of $\oAss$-algebras.
\end{example}

The monoid and commutative monoid objects in the category of sets
(with the usual symmetric monoidal structure given by cartesian
product) are just the monoids and commutative monoids in the usual
sense.  Likewise, in spaces, they are the topological monoids and
topological commutative monoids.  In the category of abelian groups
(with the usual symmetric monoidal structure given by the tensor
product), the monoid objects are the rings and the commutative monoid
objects are the commutative rings. In the category of chain complexes
of $R$-modules for a commutative ring $R$ (with the usual symmetric
monoidal structure given by tensor product over $R$), the monoid objects are the
differential graded $R$-algebras and the commutative monoid objects
are the commutative differential graded $R$-algebras.  In a modern
category of spectra, the monoid objects are called
\term{$\bS$-algebras} or sometimes \term{strictly associative ring
spectra}.  Some authors take the term ``ring spectrum'' to be
synonymous with $\bS$-algebra, but others take it to mean the
weaker notion of monoid object in the stable category (or even weaker
notions).  Work of Schwede-Shipley~\cite[3.12.(3)]{SS-Monoidal} shows that the
homotopy category of monoid objects in any modern category of spectra
is equivalent to an appropriate full subcategory of the (mutually
equivalent) homotopy category of monoid objects in EKMM $S$-modules,
symmetric spectra, or orthogonal spectra (at least when ``modern
category of spectra'' is used as a technical term to mean a model
category with a preferred equivalence class of symmetric monoidal
Quillen equivalence to the currently known modern categories of
spectra); cf.~Example Theorem~\ref{ethm:compare} below. The analogous result does not hold
for commutative monoid objects; see~\cite{Lawson-CommGamma}.  The term
``commutative $\bS$-algebra'' is typically reserved for examples where
the homotopy category of commutative monoid objects is equivalent to
an appropriate full subcategory of the (mutually equivalent) homotopy
category of commutative monoid objects in EKMM $S$-modules,
symmetric spectra, or orthogonal spectra.

Returning to the discussion of operadic algebras,
in the case when $\mC$ is a closed symmetric monoidal category,
adjoint to the action map 
\[
\xi_{m}\colon \oO(m)\mtimes A^{(m)}\to A
\]
is a map
\[
\phi_{m}\colon \oO(m)\to F(A^{(m)},A)=\oEnd_{A}(m).
\]
Equivariance for $\xi_{m}$ is equivalent to equivariance for
$\phi_{m}$.  Similarly, conditions~(i) and~(ii) in the definition of
$\oO$-algebra (Definition~\ref{def:algebra}) are adjoint to the
diagrams in the definition of map of operads
(Definition~\ref{def:opmap}).  This proves the following proposition,
which gives an alternative definition of $\oO$-algebra.

\begin{prop}\label{prop:algend}
Let $\mC$ be a closed symmetric monoidal category, let $\oO$ be an
operad in $\mC$, and let $X$ be an object in $\mC$.  The adjunction
rule $\xi_{m} \leftrightarrow \phi_{m}$ above defines a bijection
between the set of $\oO$-algebra structures on $X$ and the set of maps
of operads $\oO\to \oEnd_{X}$.
\end{prop}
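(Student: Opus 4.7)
The plan is to exploit the $(-\mtimes X) \dashv F(X,-)$ adjunction pointwise to convert each action map $\xi_m$ into a map $\phi_m$ landing in $\oEnd_X(m) = F(X^{(m)},X)$, and then verify one condition at a time that the two packages of data match. So I will first define the bijection on underlying sequences of maps, and then separately check equivariance, composition, and unit.

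For the underlying bijection, I note that $\phi_m$ is defined as the adjoint of $\xi_m$ under the adjunction $(-)\mtimes X^{(m)} \dashv F(X^{(m)},-)$; the inverse takes $\phi_m$ and composes its image under $(-)\mtimes X^{(m)}$ with the evaluation map $\oEnd_X(m) \mtimes X^{(m)} \to X$. This is a set-level bijection before imposing any axioms. Next I would verify that $\xi_m$ is equivariant for the diagonal left $\Sigma_m$-action (trivial on $X$) if and only if $\phi_m$ is $\Sigma_m$-equivariant for the right action on $\oEnd_X(m)$; this is immediate from the definition of the $\Sigma_m$-action on $\oEnd_X(m)$ as the map induced by permuting the $\mtimes$-factors of $X^{(m)}$, together with naturality of the adjunction.

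The main content is then matching the associativity diagram (i) in Definition~\ref{def:algebra} with the composition-preservation square in Definition~\ref{def:opmap} for $\phi\colon \oO \to \oEnd_X$. The plan is to pass (i) through the adjunction $(-)\mtimes X^{(j)} \dashv F(X^{(j)},-)$; since the composition law on $\oEnd_X$ was constructed precisely by iterating evaluation maps, the right-hand side of (i) becomes $\phi_j \circ \Gamma^m_{j_1,\dots,j_m}$, while the left-hand side — obtained by first acting on $X$ with the $\xi_{j_i}$'s and then with $\xi_m$ — becomes, after adjunction, the composite with $\Gamma^m$ for $\oEnd_X$ applied to $\phi_m \mtimes \phi_{j_1} \mtimes \dotsb \mtimes \phi_{j_m}$. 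This is the content of the map-of-operads diagram, and the identification is reversible because the adjunction is a bijection. The unit condition (ii) similarly translates, under adjunction, to $\phi_1 \circ 1 = 1_X$, which is the unit condition in Definition~\ref{def:opmap}.

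The hard part — really the only part demanding care — is the bookkeeping in the associativity step: the definition of $\Gamma$ on $\oEnd_X$ involves an implicit $\mtimes$-permutation shuffling the $X$-factors past the endomorphism-object factors, and one must check this matches the identifications used when rewriting $\xi_{j_i}\colon \oO(j_i)\mtimes X^{(j_i)}\to X$ composed in parallel. Once one writes out the commuting diagram for the iterated evaluation map $F(X^{(m)},X)\mtimes F(X^{(j_1)},X) \mtimes \dotsb \mtimes F(X^{(j_m)},X) \mtimes X^{(j)} \to X$ used to define $\Gamma$ on $\oEnd_X$, both sides of condition~(i) factor through it in the expected way, and the equivalence follows from the uniqueness clause of the adjunction.
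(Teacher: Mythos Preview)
Your proposal is correct and takes essentially the same approach as the paper: the paper's argument (given in the paragraph immediately preceding the proposition) simply observes that the adjunction bijection exchanges $\xi_m$ with $\phi_m$, that equivariance for $\xi_m$ is equivalent to equivariance for $\phi_m$, and that conditions~(i) and~(ii) of Definition~\ref{def:algebra} are adjoint to the composition and unit diagrams of Definition~\ref{def:opmap}. Your write-up expands exactly these points with more care about the shuffle bookkeeping in the associativity step, but there is no difference in strategy.
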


In the case when $\mC$ is (countably) cocomplete (has (countable)
colimits) and $\mtimes$ preserves (countable) colimits in each variable (which
includes the case when it is closed), algebras can also be defined in
terms of a monad.  The idea for the underlying functor is to gather
the domains of all the action maps into a coproduct; since the action
maps are equivariant with target having the trivial action, they
factor through the coinvariants (quotient by the symmetric group
action), and this goes into the definition.

\begin{notn}\label{notn:monad}
Let $\mC$ be a symmetric monoidal category with countable colimits,
and let $\oO$ be an operad in $\mC$.  Define the endofunctor $\bO$ of
$\mC$ (i.e., functor $\bO\colon \mC\to \mC$) by
\[
\bO X = \myop\coprod_{m=0}^{\infty} \oO(m)\mtimes_{\Sigma_{m}} X^{(m)}
\]
(where $\oO(m)\mtimes_{\Sigma_{m}}X^{(m)}:=(\oO(m)\mtimes X^{(m)})/\Sigma_{m}$).
\end{notn}

(When we use other letters for operads, we typically use the
corresponding letters for the associated monad; for example, we write
$\bA$ for the monad associated to an operad $\oA$, $\bB$ for the monad
associated to an operad $\oB$, etc.)

The action maps for an $\oO$-algebra $A$ then specify a map $\xi\colon
\bO A\to A$; the conditions for defining an $\oO$-structure also admit
a formulation in terms of this map.  
The basic observation is that we have a canonical isomorphism 
\begin{multline*}
(\bO X)^{(m)}\iso
\myop\coprod_{j_{1}=0}^{\infty}\dotsb \myop\coprod_{j_{m}=0}^{\infty}
(\oO(j_{1})\mtimes_{\Sigma_{j_{1}}}X^{(j_{1})})\mtimes \dotsb \mtimes 
(\oO(j_{m})\mtimes_{\Sigma_{j_{1}}}X^{(j_{m})})
\\\iso
\def\mystrut{\vrule height0pt width0pt depth1ex}
\myop\coprod_{j=0}^{\infty}\,\myop\coprod_{\putatop{j_{1},\dotsc,j_{m}\mystrut}{\sum
j_{i}=j}}
(\oO(j_{1})\mtimes \dotsb \oO(j_{m}))\mtimes_{\Sigma_{j_{1}}\times \dotsb \times \Sigma_{j_{m}}}X^{(j)}
\end{multline*}
using the symmetry isomorphism to shuffle
like factors without permuting them.  We can use this isomorphism to give $\bO X$
the canonical structure of an $\oO$-algebra, defining the action map
\[
\mu_{m}\colon \oO(m)\mtimes (\bO X)^{(m)}\to \bO X
\]
by commuting the coproduct
past $\mtimes$, using the operad composition law, and passing to the
quotient by the full permutation group
\begin{multline*}
\oO(m)\mtimes (\bO X)^{(m)}\iso 
\def\mystrut{\vrule height0pt width0pt depth1ex}
\myop\coprod_{j=0}^{\infty}\,\myop\coprod_{\putatop{j_{1},\dotsc,j_{m}\mystrut}{\sum
j_{i}=j}}
  \oO(m)\mtimes  (\oO(j_{1})\mtimes \dotsb \oO(j_{m}))
     \mtimes_{\Sigma_{j_{1}}\times \dotsb \times
     \Sigma_{j_{m}}}X^{(j)}
\\
\overto{\coprod\coprod \Gamma^{m}_{j_{1},\dotsc,j_{m}}\mtimes \id_{X}^{(j)}}
\myop\coprod_{j=0}^{\infty} \oO(j) \mtimes_{\Sigma_{j_{1}}\times \dotsb \times
     \Sigma_{j_{m}}}X^{(j)}
\to \myop\coprod_{j=0}^{\infty} \oO(j) \mtimes_{\Sigma_{j}}
X^{(j)}=\bO X.
\end{multline*}
The pictured map is well-defined because of the $(\Sigma_{j_{1}}\times
\dotsb \times \Sigma_{j_{m}})$-equivariance of
$\Gamma^{m}_{j_{1},\dotsc,j_{m}}$
(\ref{def:operad}.\eqref{part:operad:easyperm}).  
The other permutation rule
(\ref{def:operad}.\eqref{part:operad:hardperm}) implies that $\mu_{m}$
is $\Sigma_{m}$-equivariant.  The remaining two parts of the definition of
operad show that the
$\mu_{m}$ define an $\oO$-algebra structure map:
\ref{def:operad}.(\ref{part:operad:assoc})--(ii) imply
\ref{def:algebra}.(i)--(ii), respectively.  This $\oO$-algebra
structure then defines a map
\[
\mu\colon \bO\bO X\to \bO X
\]
as above, which is natural in $X$.  The map $1\mtimes \id_{X}$ also
induces a natural transformation 
\[
\eta \colon X\to \bO X.
\]
These two maps together give $\bO$ the structure of a monad.

\begin{prop}\label{prop:monad}
Let $\mC$ be a symmetric monoidal category with countable colimits and
assume that $\mtimes$ commutes with countable colimits in each variable.  For an
operad $\oO$, the functor $\bO$ and natural transformations $\mu$,
$\eta$ form a monad: the diagrams
\[
\xymatrix{%
\bO\bO\bO X\mathstrut\ar[r]^{\mu}\ar[d]_{\bO\mu}&\bO\bO X\ar[d]^{\mu}\\
\bO\bO X\mathstrut\ar[r]_{\mu}&\bO X
}\qquad 
\xymatrix{%
\bO X\mathstrut\ar[r]^{\eta}\ar@{=}[dr]&\bO\bO X\ar[d]^{\mu}\\
&\bO X\mathstrut
}
\]
commute (where the top map in the lefthand diagram is the map $\mu$
for the object $\bO X$).
\end{prop}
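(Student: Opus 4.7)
The plan is to verify the monad associativity square and unit triangle by reducing each to the corresponding axiom of Definition~\ref{def:operad}. The hypothesis that $\mtimes$ preserves countable colimits in each variable is used throughout to distribute $\oO(k)\mtimes(-)$ over the coproducts defining $\bO$ and to commute $\mtimes$ past symmetric-group coinvariants, which yields canonical decompositions of $\bO\bO X$ and $\bO\bO\bO X$ as coproducts of summands indexed respectively by sequences $(m,j_1,\dots,j_m)$ and $(m,j_1,\dots,j_m,k_1,\dots,k_j)$ with $j=\sum j_i$. Under these decompositions the natural transformations $\mu$ and $\eta$ are induced summand-wise by the operad composition maps $\Gamma$ and the operad unit $1$ respectively.

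First I would verify the unit triangle $\mu\circ\eta_{\bO X} = \id = \mu\circ\bO\eta_X$. The transformation $\eta_{\bO X}\colon \bO X\to\bO\bO X$ factors through the summand $\oO(1)\mtimes_{\Sigma_1}(\bO X)$ of $\bO\bO X$, on which the restriction of $\mu$ is induced summand-wise by $\Gamma^1_m\circ(1\mtimes\id)\colon \mS\mtimes\oO(m)\to\oO(m)$; this composite is the identity by the left-unit half of \ref{def:operad}.\eqref{part:operad:unit}. The other equality follows dually, using the right-unit half of the same axiom applied to $\Gamma^m_{1,\dots,1}\circ(\id\mtimes 1^{(m)})$.

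For the associativity square, fix a summand of $\bO\bO\bO X$ indexed by $(m, j_1,\dots,j_m, k_1,\dots,k_j)$ and write $k=\sum k_\ell$. Tracing the definitions, the restriction of $\mu\circ\mu_{\bO X}$ to this summand is induced by $\Gamma^j_{k_1,\dots,k_j}\circ(\Gamma^m_{j_1,\dots,j_m}\mtimes\id)$, while the restriction of $\mu\circ\bO\mu$ is induced by $\Gamma^m_{s_1,\dots,s_m}\circ(\id\mtimes\Gamma^{j_1}_{k_1,\dots,k_{j_1}}\mtimes\dots\mtimes\Gamma^{j_m}_{k_{t_m+1},\dots,k_j})\circ c$, with $s_i$, $t_i$, and $c$ as in \ref{def:operad}.\eqref{part:operad:assoc}. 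These two summand-wise maps agree by the operad associativity axiom, exactly as displayed in Figure~\ref{fig:operadassoc}; equality then descends to the relevant coinvariants.

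The main obstacle is purely bookkeeping: tracking the indexing so that the two legs of the monad associativity square decompose summand-wise into the two legs of the operad associativity hexagon, and verifying that the $\Sigma$-equivariance needed to pass to coinvariants is supplied by parts \eqref{part:operad:easyperm} and \eqref{part:operad:hardperm} of Definition~\ref{def:operad}, just as in the construction of the $\oO$-algebra structure on $\bO X$ preceding the statement. Once this bookkeeping is in place, no further ideas are required.
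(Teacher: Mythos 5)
Your proof is correct and follows essentially the same route as the paper: the paper's own argument is the one-line remark that one applies Definition~\ref{def:algebra}.(i)--(ii) to the $\oO$-algebra $\bO X$, which, once unpacked, is exactly the summand-wise reduction to the operad associativity and unit axioms that you carry out explicitly.
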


The proof is applying \ref{def:algebra}.(i)--(ii) for $\bO X$.

\begin{example}
Under the hypotheses of the previous proposition, the monad associated
to the identity operad $\oI$ is canonically isomorphic (via the unit
isomorphism) to the identity monad $\Id$.  The monad
associated to the operad $\oCom$ is canonically isomorphic to the free
commutative monoid monad
\[
\bP X = \myop\coprod_{j=0}^{\infty} X^{(j)}/\Sigma_{j}.
\]
The monad
associated to the algebra $\oAss$ is canonically isomorphic to the free
monoid monad
\[
\bT X = \myop\coprod_{j=0}^{\infty} X^{(j)}.
\]
\end{example}

An algebra over the monad $\bO$ consists of an object $A$ and a map
$\xi \colon \bO A\to A$ such that the diagrams
\[
\xymatrix{%
\bO\bO A\mathstrut\ar[r]^{\mu}\ar[d]_{\bO \xi}&\bO A\ar[d]^{\xi}\\
\bO A\mathstrut\ar[r]_{\xi}&A
}\qquad 
\xymatrix{%
A\mathstrut\ar[r]^{\eta}\ar@{=}[dr]&\bO A\ar[d]^{\xi}\\
\relax\mathstrut&A
}
\]
commute.  Given an $\oO$-algebra $(A,\{\xi_{m}\})$, the map $\xi\colon
\bO A\to A$ constructed as the coproduct of the induced maps on
coinvariants then is an $\bO$-algebra action map.  Conversely, given
an $\bO$-algebra $(A,\xi)$, defining $\xi_{m}$ to be the composite
\[
\oO(m)\mtimes A^{(m)}\to \bO A\overto{\xi}A,
\]
the maps $\xi_{m}$ make $A$ an $\oO$-algebra.  This gives a second
alternative definition of $\oO$-algebra.

\begin{prop}\label{prop:algmon}
Under the hypotheses of Proposition~\ref{prop:monad}, for $X$ an
object of $\mC$, the
correspondence $\{\xi_{m}\}\leftrightarrow \xi$ above defines a
bijection between the set of $\oO$-algebra structures on $X$ and the
set of $\bO$-algebra structures on $X$ and an isomorphism between the
category of $\oO$-algebras and the category of $\bO$-algebras.
\end{prop}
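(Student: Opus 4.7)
The plan is to verify the correspondence $\{\xi_m\} \leftrightarrow \xi$ in three stages: well-definedness of both directions, mutual inverseness, and equivalence of the respective axioms (plus the morphism correspondence). First I would check that each construction produces a well-defined structure of the correct type on the underlying object $A$. Starting from action maps $\xi_m$, the equivariance condition in Definition~\ref{def:algebra} is exactly what is needed for each $\xi_m$ to descend to the coinvariants $\oO(m) \mtimes_{\Sigma_m} A^{(m)}$, and the coproduct of these quotient maps defines $\xi \colon \bO A \to A$. Going the other way, the composite $\oO(m) \mtimes A^{(m)} \to \bO A \overto{\xi} A$ is automatically $\Sigma_m$-equivariant because it factors through coinvariants. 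That these two constructions are mutually inverse is essentially tautological once one uses the universal property of coproducts and coinvariants: the inverse construction on each summand just unwinds the definition of the forward construction.

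Next I would verify that the associativity and unit axioms correspond. Using the isomorphism for $(\bO A)^{(m)}$ displayed in the excerpt above Notation~\ref{notn:monad}, we may write
\[
\bO\bO A \iso \myop\coprod_{m,j_1,\dotsc,j_m}
\bigl(\oO(m) \mtimes \oO(j_1) \mtimes \dotsb \mtimes \oO(j_m)\bigr)
\mtimes_{(\Sigma_m; \Sigma_{j_1} \times \dotsb \times \Sigma_{j_m})} A^{(j)},
\]
where the symmetric group actions are as in the construction of $\mu$ preceding Proposition~\ref{prop:monad}. On each such summand, $\xi \circ \mu$ uses $\Gamma^{m}_{j_1,\dotsc,j_m}$ followed by $\xi_j$, while $\xi \circ \bO\xi$ uses $\xi_{j_1} \mtimes \dotsb \mtimes \xi_{j_m}$ followed by $\xi_m$. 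The monad associativity diagram thus restricts on each summand to precisely Definition~\ref{def:algebra}.(i), and conversely the compatibility on every summand assembles to the monad associativity via the universal property of the coproduct. For units, the natural transformation $\eta \colon X \to \bO X$ is by construction the composite $X \iso \mS \mtimes X \overto{1 \mtimes \id} \oO(1) \mtimes X \to \bO X$, so $\xi \circ \eta_A = \id_A$ is exactly Definition~\ref{def:algebra}.(ii).

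Finally, for a map $f \colon A \to A'$ of underlying objects, I would observe that commuting with each $\xi_m$ (Definition~\ref{def:algebra}) is equivalent to commuting with $\xi$: the forward implication is just taking coproducts of commuting squares and passing to coinvariants, while the reverse follows by restricting to each summand $\oO(m) \mtimes A^{(m)} \to \bO A$ and using that the action of $f$ on this summand is $\id_{\oO(m)} \mtimes f^{(m)}$. This makes the bijection of structures an isomorphism of categories.

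The main obstacle is paragraph two: carefully tracking the decomposition of $\bO\bO A$ and identifying how $\mu$ and $\bO\xi$ act on each summand, so as to see that the single monad associativity diagram disassembles into precisely the family of operadic associativity diagrams indexed by $(m; j_1,\dotsc,j_m)$. The bookkeeping is essentially the same as the one already carried out to show that $\bO$ is a monad in Proposition~\ref{prop:monad}, so once that decomposition is set up, the verification is a direct comparison of summand-by-summand data.
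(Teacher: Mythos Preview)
Your proposal is correct and follows the same approach as the paper; indeed, the paper does not give a formal proof of this proposition at all but simply sets up the correspondence in the paragraph preceding the statement and asserts that each direction produces the required structure. Your write-up is a careful elaboration of exactly that sketch, with the decomposition of $\bO\bO A$ being the same one the paper uses in constructing $\mu$ before Proposition~\ref{prop:monad}.
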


\section{Modules over Operadic Algebras}\label{sec:modules}

Just as an operad defines a category of algebras, an algebra defines a
category of modules.  Because this chapter concentrates on the theory of
operadic algebras, we will only touch on the theory of modules. A
complete discussion could fill a book and many of the aspects of the
theory of operads we omit in this chapter (including Koszul duality,
Quillen (co)homology, Deligne and Kontsevich conjectures) correspond
to statements about categories of modules; even an overview could form
its own chapter.  We restrict to giving a brief review of the
definitions and the homotopy theory.

The original definition of modules over an operadic algebra seems to
be due to Ginzburg and Kapranov~\cite[\S1.6]{GinzburgKapranov}.

\begin{defn}\label{def:module}
For an operad $\oO$ and an $\oO$-algebra $A$, an $(\oO,A)$-module (or
just $A$-module when $\oO$ is understood) consists of an object
$M$ of $\mC$ and structure maps
\[
\zeta_{m} \colon \oO(m+1)\mtimes (A^{(m)}\mtimes M)\to M
\]
for $m\geq 0$ such that the associativity, symmetry, and unit diagrams
in Figure~\ref{fig:module} commute. A map of $A$-modules is a map
of the underlying objects of $\mC$ that commutes with the structure maps.
\end{defn}

\begin{figure}
\hrule
\begin{gather*}
\xymatrix@C-3pc@R-1pc{%
&\oO(j+1)\mtimes (A^{(j)}\mtimes M)
\ar[dd]^{\zeta_{j}}\\
\oO(m+1)\mtimes (\oO(j_{1})\mtimes \dotsb \mtimes \oO(j_{m})\mtimes \mS)\mtimes
(A^{(j)}\mtimes M)
\ar `u[u] [ur]^(.25){\left(\Gamma^{m+1}_{j_{1},\dotsc,j_{m},1}\circ
   (\id\mtimes \dotsb\mtimes \id \mtimes 1)\right)\mtimes \id}
\ar[dd]_{c}\\
&M\\
\txt{{$\oO(m+1)\mtimes \left(\begin{aligned}
&(\oO(j_{1})\mtimes A^{(j_{1})})\mtimes \dotsb\cr
&\quad\dotsb\mtimes  (\oO(j_{m})\mtimes A^{(j_{m})})\mtimes M
\end{aligned}\right)$}}
\ar `d[dr] [dr]_-{\id\mtimes(\xi_{j_{1}}\mtimes \dotsb \mtimes \xi_{j_{m}}\mtimes \id)}\\
&\oO(m+1)\mtimes (A^{(m)}\mtimes M)\ar[uu]_{\zeta_{m}}
}\\[1em]
\xymatrix@R-1pc{%
\oO(m+1)\mtimes A^{(m)}\mtimes M\ar[dd]_{a_{\sigma^{-1}}\mtimes c_{\sigma}\mtimes \id}
\ar[dr]^-{\zeta_{m}}
&&\mS\mtimes M\ar[ddr]_{\iso}\ar[r]^-{1\mtimes \id}
&\oO(1)\mtimes M\ar[dd]^{\zeta_{0}}\\
&M\\
\oO(m+1)\mtimes A^{(m)}\mtimes M\ar[ur]_-{\zeta_{m}}
&&&M
}
\end{gather*}
\caption{The diagrams for Definition~\ref{def:module}} 
\label{fig:module}
\vskip 1ex
\begin{minipage}{.9\hsize}
In the first diagram, $j=j_{1}+\dotsb+j_{m}$ and $c$ is the
$\mtimes$-permutation that shuffles the $\oO(j_{i})$'s past the $M$
and $A$'s 
as displayed composed with the unit isomorphism for
$\mtimes$; $\xi_{i}$ denote the $\oO$-algebra structure maps for $A$.
In the second diagram, $\sigma$ is a permutation of $\{1,\dotsc,m\}$,
permuting the factors of $A$,
viewed as an element of $\Sigma_{m+1}$ for 
permutation action on $\oO(m+1)$.
In the third diagram, the diagonal isomorphism is the 
unit isomorphism for $\mtimes$.
\end{minipage}
\vskip 1em
\hrule
\end{figure}

Although the definition appears to favor $A$ on the left, we obtain
analogous righthand structure maps
\[
\oO(m+1)\mtimes (M\mtimes A^{(m)})\to M
\]
satisfying the analogous righthand version of the diagrams in
Figure~\ref{fig:module} by applying an appropriate permutation.  Thus,
an $A$-module structure can equally be regarded as either a left or
right module structure.  The following example illustrates
this point.

\begin{example}
When $\oO=\oAss$, the
(symmetric) operad for associative algebras and $A$ is an
$\oO$-algebra (i.e., $\mtimes$-monoid), an $(\oO,A)$-module in the
sense of the previous definition is precisely an $A$-bimodule in the
usual sense: it has structure maps
\[
\lambda \colon A\mtimes M\to M\qquad \text{and}\qquad \rho\colon
M\mtimes A\to M
\]
satisfying the following associativity, unity, and interchange diagrams
\begin{gather*}
\xymatrix{%
A\mtimes A\mtimes M\ar[r]^-{\mu\mtimes\id}\ar[d]_-{\id\mtimes\lambda}
&A\mtimes M\ar[d]^-{\lambda}
&M\mtimes A\mtimes A\ar[r]^-{\id\mtimes\mu}\ar[d]_-{\rho\mtimes\id}
&M\mtimes A\ar[d]^-{\rho}\\
A\mtimes M\ar[r]_-{\lambda}&M&M\mtimes A\ar[r]_-{\rho}&M
}\\
\xymatrix{%
A\mtimes M\ar[dr]_-{\lambda}
&\mS\mtimes M\iso M\mtimes \mS
   \ar[d]\ar[l]_-{\eta\mtimes\id}\ar[r]^-{\id\mtimes\eta}
&M\mtimes A\ar[dl]^-{\rho}\\
&M
}\qquad 
\xymatrix{%
A\mtimes M\mtimes A\ar[r]^-{\lambda \mtimes \id}\ar[d]_-{\id\mtimes\rho}
&M\mtimes A\ar[d]^-{\rho}\\
A\mtimes M\ar[r]_-{\lambda}&M
}
\end{gather*}
where $\mu$ denotes the multiplication and $\eta$ the unit for $A$ and
the unlabeled arrow is the unit isomorphism for $\mtimes$.
\end{example}

Obtaining a theory of modules closer to the idea of a left module (or
right module) over an associative algebra requires working with
non-symmetric operads.  

\begin{defn}\label{def:leftmodule}
Let $\nsO$ be a non-symmetric operad and let
$A$ be an $\nsO$-algebra.  A left $(\nsO,A)$-module (or
just left $A$-module when $\nsO$ is understood) consists of an object
$M$ of $\mC$ and structure maps
\[
\zeta_{m} \colon \nsO(m+1)\mtimes (A^{(m)}\mtimes M)\to M
\]
for $m\geq 0$ such that the associativity and unit diagrams
in Figure~\ref{fig:module} commute (with $\nsO$ in place of $\oO$). A
map of left $A$-modules is a map of the underlying objects of $\mC$ that
commutes with the structure maps. 
\end{defn}

We also have the evident notion of a right $A$-module defined in terms
of structure maps
\[
\zeta_{m} \colon \nsO(m+1)\mtimes (M\mtimes A^{(m)})\to M
\]
and the analogous righthand associativity and unit diagrams.

Unlike in the case of operadic algebras, where 
working with a non-symmetric operad and its corresponding symmetric
operad results in the same theory, in the case of modules, the results
are very different.  

\begin{example}
When $\nsO=\nsAss$, the
non-symmetric operad for associative algebras and $A$ is an
$\nsO$-algebra (i.e., a $\mtimes$-monoid), a left $(\nsAss,A)$-module in the
sense of the previous definition is precisely a left $A$-module in the
usual sense defined in terms of an associative and unital left action
map $A\mtimes M\to M$.  Likewise, a right $(\nsAss,A)$-module is
precisely a right $A$-module in the usual sense.
\end{example}

Under mild hypotheses, the category of $(\oO,A)$-modules is a category of
modules for a $\mtimes$-monoid called the
enveloping algebra of $A$.

\begin{cons}[The enveloping algebra]\label{cons:UA}
Assume that $\mC$ admits countable colimits and $\mtimes$ preserves
countable colimits in each variable.  For an operad $\oO$ and an
$\oO$-algebra $A$, let $U^{\oO}A$ (or $UA$ when $\oO$
is understood) be the following coequalizer
\[
\xymatrix@C-1pc{%
\myop\coprod_{m=0}^{\infty}\oO(m+1)\mtimes_{\Sigma_{m}} (\bO A)^{(m)}\ar@<.5ex>[r]\ar@<-.5ex>[r]
&\myop\coprod_{m=0}^{\infty}\oO(m+1) \mtimes_{\Sigma_{m}} A^{(m)}\ar[r]
&U^{\oO}A
}
\]
where we regard $\Sigma_{m}$ as the usual subgroup of $\Sigma_{m+1}$
of permutations that fix $m+1$.  Here one map is induced by the action map $\bO A\to A$ and the other
is induced by the operadic multiplication
\begin{multline*}
\oO(m+1)\mtimes (\nbO A)^{(m)}
\iso \myop\coprod_{j_{1},\dotsc,j_{m}}\oO(m+1)\mtimes
(\oO(j_{1})\mtimes A^{(j_{1})})\mtimes\dotsb \mtimes
(\oO(j_{m})\mtimes A^{(j_{m})})\\
\iso
\myop\coprod_{j_{1},\dotsc,j_{m}}\bigl(\oO(m+1)\mtimes (\oO(j_{1})\mtimes\dotsb \mtimes
\oO(j_{m})\mtimes \mS)\bigr)\mtimes A^{(j)}\\
\overto{\coprod \Gamma^{m+1}_{j_{1},\dotsc,j_{m},1}\mtimes \id}
\oO(j+1)\mtimes A^{(j)}
\end{multline*}
(where we have omitted writing $1\colon \mS\to \oO(1)$ and as
always $j=j_{1}+\dotsb+j_{m}$).  Let $\eta \colon \mS\to UA$ be the
map induced by $1\colon \mS\to \oO(1)$ and the inclusion of the $m=0$
summand and let $\mu\colon UA\mtimes UA\to UA$ be the map induced from the maps
\[
(\oO(m+1)\mtimes A^{(m)})\mtimes (\oO(n+1)\mtimes A^{(n)})\to
\oO(m+n+1)\mtimes A^{(m+n)}
\]
obtained from the map $\circ_{m+1}\colon \oO(m+1)\mtimes \oO(n+1)\to
\oO(m+n+1)$ defined as the composite
\[
\oO(m+1)\mtimes \oO(n+1)\iso
\oO(m+1)\mtimes(\mS\mtimes \dotsb \mtimes\mS \mtimes \oO(n+1))
\overto{\!\Gamma^{m+1}_{1,\dotsc,1,n+1}\!}
\oO(m+n+1)
\]
(where again we have omitted writing $1\colon \mS\to \oO(1)$).
Associativity of the operad multiplication implies that $\eta$ and
$\mu$ give $UA$ the structure of an associative monoid for $\mtimes$ and
the resulting object is called the \term{enveloping algebra} of $A$
over $\oO$.
\end{cons}

An easy argument from the definitions and universal property of the
coequalizer proves the following proposition. 

\begin{prop}\label{prop:modulecat}
Assume $\mC$ admits countable coproducts and $\mtimes$ preserves them
in each variable. Let $\oO$ be an operad and let $A$ be an
$\oO$-algebra.  For an object $X$ of $\mC$, 
$(\oO,A)$-module structures on $X$ are in bijective correspondence with left
$U^{\oO}A$-module structures.  In particular, the category of 
$(\oO,A)$-modules is isomorphic to the category of left $U^{\oO}A$-modules.
\end{prop}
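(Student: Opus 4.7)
The plan is to build the correspondence directly from the universal property of the coequalizer defining $UA$, unpacking what a map out of $UA\mtimes X$ amounts to. Given an $(\oO,A)$-module structure $\{\zeta_m\}$ on $X$, the symmetry diagram in Figure~\ref{fig:module} shows that each $\zeta_m$ is $\Sigma_m$-equivariant (where $\Sigma_m\subset \Sigma_{m+1}$ fixes $m+1$), so it factors through $\oO(m+1)\mtimes_{\Sigma_m}(A^{(m)}\mtimes X)\iso(\oO(m+1)\mtimes_{\Sigma_m}A^{(m)})\mtimes X$. Assembling over $m$ gives a map
\[
\Bigl(\myop\coprod_{m=0}^{\infty}\oO(m+1)\mtimes_{\Sigma_m}A^{(m)}\Bigr)\mtimes X\to X,
\]
and the associativity diagram in Figure~\ref{fig:module}, applied with $M=X$ and with the various $j_i$-ary operations acting on $A$, says exactly that this map coequalizes the two parallel arrows in the coequalizer defining $UA$. (One composite uses operadic multiplication in $\oO$ followed by $\zeta_j$; the other uses the $\oO$-algebra action maps $\xi_{j_i}$ on $A$ followed by $\zeta_m$.) By the universal property, we obtain a well-defined map $\xi\colon UA\mtimes X\to X$.

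Next I verify $\xi$ is associative and unital for the monoid structure on $UA$ constructed in~\ref{cons:UA}. Unitality is immediate from the unit diagram in Figure~\ref{fig:module} together with the definition of $\eta\colon \mS\to UA$ via $1\colon\mS\to\oO(1)$. Associativity is the main content and the step I expect to require the most bookkeeping: the multiplication on $UA$ is induced by the operadic composition maps $\Gamma^{m+1}_{1,\dotsc,1,n+1}$, so the diagram $\xi\circ(\mu\mtimes\id)=\xi\circ(\id\mtimes\xi)$ reduces, after passing back through the coequalizer, to an instance of the associativity diagram of Figure~\ref{fig:module} in which all the ``extra'' $j_i$ equal $1$ except one. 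This instance holds by the $\oO$-algebra/$A$-module associativity and the fact that $1$ is a unit for $\Gamma$.

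Conversely, given a left $UA$-module structure $\xi\colon UA\mtimes X\to X$, I define $\zeta_m$ as the composite
\[
\oO(m+1)\mtimes A^{(m)}\mtimes X\to \bigl(\oO(m+1)\mtimes_{\Sigma_m}A^{(m)}\bigr)\mtimes X\to UA\mtimes X\overto{\xi}X,
\]
using the canonical maps into the coproduct and coequalizer. The $\Sigma_m$-equivariance is built in, and the associativity and unit diagrams of Figure~\ref{fig:module} follow from associativity and unitality of $\xi$, using the explicit form of $\mu$ and $\eta$ on $UA$. The two constructions are visibly inverse at the level of structures on a fixed object $X$, because each $\zeta_m$ determines and is determined by the restriction of $\xi$ to the $m$th summand of $UA\mtimes X$, and conversely $\xi$ is determined by its restrictions by the universal property.

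Finally, I extend this bijection of structures to an isomorphism of categories by checking that a map $f\colon X\to Y$ in $\mC$ commutes with all the $\zeta_m$ on one side if and only if it commutes with the associated $\xi$ on the other; this is again immediate from the universal property of the coproduct and coequalizer defining $UA\mtimes(-)$ (which commutes with colimits in $X$ by the hypothesis that $\mtimes$ preserves countable colimits in each variable). The principal obstacle is purely notational: organizing the coequalizer diagram so that the associativity axiom for $UA$-modules and the associativity axiom of Figure~\ref{fig:module} are manifestly the same equation after one transposes between the two descriptions.
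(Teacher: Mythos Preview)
Your approach is exactly what the paper has in mind: the paper states only that ``an easy argument from the definitions and universal property of the coequalizer'' proves the result, and you have written out precisely that argument.

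One step needs sharpening. For the associativity of the $UA$-action you appeal to an instance of the associativity diagram in Figure~\ref{fig:module} ``in which all the extra $j_i$ equal $1$ except one,'' but as drawn that diagram always places the operadic unit $1\colon\mS\to\oO(1)$ in the module slot, so every instance compares $\zeta_j$ with $\zeta_m$ precomposed with \emph{algebra} actions $\xi_{j_i}$---never with an iterated $\zeta$. What $UA$-associativity actually requires on representatives is
\[
\zeta_m\circ(\id_{\oO(m+1)}\mtimes\id_{A^{(m)}}\mtimes\zeta_n)
=\zeta_{m+n}\circ(\circ_{m+1}\mtimes\id),
\]
that is, the module associativity with an arbitrary $\oO(n{+}1)$ (acting via $\zeta_n$) in the last slot. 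This is the standard Ginzburg--Kapranov axiom and is what the paper intends; the diagram as literally drawn is only the special case $j_{m+1}=1$, and that special case does not by itself yield the iterated-$\zeta$ identity (one can build counterexamples already for $\oO=\oCom$). Your argument is correct once you invoke the full axiom rather than the restricted instance the figure depicts.
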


Similarly, in the case of non-symmetric operads, we can construct a
left module enveloping algebra $\nU^{\nsO}A$ (denoted $\nU A$ when
$\nsO$ is understood) as the following coequalizer
\begin{equation}\label{eq:nUA}
\xymatrix@C-1pc{%
\myop\coprod_{m=0}^{\infty}\nsO(m+1)\mtimes (\nbO A)^{(m)}\ar@<.5ex>[r]\ar@<-.5ex>[r]
&\myop\coprod_{m=0}^{\infty}\nsO(m+1) \mtimes A^{(m)}\ar[r]
&\nU^{\nsO}A
}
\end{equation}
with maps defined as in Construction~\ref{cons:UA}.  The analogous
identification of module categories holds.

\begin{prop}\label{prop:modulecat2}
Assume $\mC$ admits countable coproducts and $\mtimes$ preserves them
in each variable.  Let $\nsO$ be a non-symmetric operad and let $A$ be
an
$\nsO$-algebra.  For an object $X$ of $\mC$, left
$(\nsO,A)$-module structures on $X$ are in bijective correspondence with left
$\nU A$-module structures.  In particular, the category of left
$(\nsO,A)$-modules is isomorphic to the category of left $\nU A$-modules.
\end{prop}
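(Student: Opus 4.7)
The plan is to mimic the proof of Proposition~\ref{prop:modulecat}, exploiting the universal property of the coequalizer~\eqref{eq:nUA} together with the hypothesis that $\mtimes$ preserves countable colimits in each variable. The first job is to convert a left $(\nsO,A)$-module structure $\{\zeta_{m}\}$ on $X$ into a left $\nU A$-module structure, and vice versa; the second job is to promote the bijection of structures to an isomorphism of categories.

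First I would assemble the $\zeta_{m}$'s into a single map
\[
\tilde\zeta\colon \myop\coprod_{m=0}^{\infty}\nsO(m+1)\mtimes A^{(m)}\mtimes X\to X.
\]
The associativity diagram of Figure~\ref{fig:module} (with $\nsO$ for $\oO$) says exactly that the two parallel composites
\[
\myop\coprod_{m=0}^{\infty}\nsO(m+1)\mtimes (\nbO A)^{(m)}\mtimes X
\toto
\myop\coprod_{m=0}^{\infty}\nsO(m+1)\mtimes A^{(m)}\mtimes X
\overto{\tilde\zeta} X
\]
agree. Since $(-)\mtimes X$ preserves countable colimits, smashing~\eqref{eq:nUA} with $X$ still yields a coequalizer, so $\tilde\zeta$ descends uniquely to a map $\xi\colon \nU A\mtimes X\to X$. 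The unit axiom for $\xi$ follows immediately from the third diagram of Figure~\ref{fig:module} since $\eta\colon\mS\to\nU A$ is constructed from $1\colon\mS\to\nsO(1)$ and the $m=0$ inclusion. For associativity, I would check that the two maps $\nU A\mtimes\nU A\mtimes X\rightrightarrows X$ agree by testing after precomposition with $\nsO(m+1)\mtimes A^{(m)}\mtimes \nsO(n+1)\mtimes A^{(n)}\mtimes X$; both sides reduce to an iterated application of $\zeta_{m+n}$, one by invoking the definition of $\mu_{\nU A}$ via $\Gamma^{m+1}_{1,\dotsc,1,n+1}$ and then using the first diagram of Figure~\ref{fig:module}, the other by a direct double application of the associativity diagram.

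Conversely, given a left $\nU A$-module structure $\xi\colon \nU A\mtimes X\to X$, I would define $\zeta_{m}$ as the composite
\[
\nsO(m+1)\mtimes A^{(m)}\mtimes X\to \nU A\mtimes X\overto{\xi} X,
\]
where the first arrow is the $m$th coproduct summand followed by the coequalizer map. The associativity and unit diagrams for the $\zeta_{m}$'s follow by reversing the reasoning above: the definition of $\mu_{\nU A}$ was rigged precisely so that the first diagram of Figure~\ref{fig:module} becomes an instance of associativity for $\xi$, and similarly for the unit. These two constructions are inverse by inspection of the generators of $\nU A$.

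The hard part is the associativity bookkeeping in the forward direction: one must verify that the nonsymmetric operadic composite $\circ_{m+1}$ used to build $\mu_{\nU A}$ corresponds exactly to iterating a $\zeta$-style action ``on the left of $M$,'' which requires unfolding the definition of $\mu_{\nU A}$ and lining it up with the associativity pentagon. Once this is done, naturality of the whole construction in $X$ is manifest: a map $f\colon X\to X'$ respects the $\zeta_{m}$'s if and only if the induced maps $\nU A\mtimes X\to X$ and $\nU A\mtimes X'\to X'$ commute with $f$, because the coequalizer map is epi and testing on the summands $\nsO(m+1)\mtimes A^{(m)}\mtimes X$ recovers the original compatibility condition. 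This upgrades the bijection of structures to the claimed isomorphism of categories.
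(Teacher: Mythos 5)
Your plan—assemble the $\zeta_{m}$'s into $\tilde\zeta$, factor through $\nU A\mtimes X$ using that $(-)\mtimes X$ preserves the coequalizer~\eqref{eq:nUA}, and then restrict to summands for the converse—is the right shape for this proposition and is essentially what the paper has in mind (it offers no proof beyond ``analogous to the symmetric case,'' which was itself called an ``easy argument from the definitions and universal property of the coequalizer''). The descent of $\tilde\zeta$ to $\xi\colon\nU A\mtimes X\to X$ is correctly reduced to the first diagram of Figure~\ref{fig:module}, since the two parallel maps in~\eqref{eq:nUA} are precisely the two routes around that diagram.

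The step that fails as you have argued it is the associativity of $\xi$. You say both sides of $\xi\circ(\mu\mtimes\id)=\xi\circ(\id\mtimes\xi)$ ``reduce to an iterated application of $\zeta_{m+n}$ \dots\ by a direct double application of the associativity diagram,'' but the first diagram of Figure~\ref{fig:module} only governs substituting algebra operations into the $A$-slots while inserting the unit $1\colon\mS\to\nsO(1)$ in the $M$-slot (the source there has $\nsO(j_{1})\mtimes\dotsb\mtimes\nsO(j_{m})\mtimes\mS$, and the composition used is $\Gamma^{m+1}_{j_{1},\dots,j_{m},1}$). What you actually need is the other kind of substitution: $\mu_{\nU A}$ on the summand $\nsO(m+1)\mtimes A^{(m)}\mtimes\nsO(n+1)\mtimes A^{(n)}$ is built from $\circ_{m+1}=\Gamma^{m+1}_{1,\dots,1,n+1}$, which plugs $\nsO(n+1)$ into the $M$-slot, and what is required is the identity
\[
\zeta_{m+n}\bigl(\Gamma^{m+1}_{1,\dots,1,n+1}(o;1,\dots,1,o'),\,a,\,a',\,x\bigr)
=\zeta_{m}\bigl(o,\,a,\,\zeta_{n}(o',a',x)\bigr).
\]
No instance of the diagram in Figure~\ref{fig:module} yields this: that diagram relates $\zeta_{j}$ to $\zeta_{m}$ by collapsing $A$-blocks via $\xi$, and never produces a nested $\zeta_{n}$ inside a $\zeta_{m}$. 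Concretely, for $\nsO=\nsAss$, $\mC=\Set$, $A=(\bN,+)$, \emph{any} family $\zeta_{m}(n_{1},\dots,n_{m},x)=f(n_{1}+\dots+n_{m})(x)$ with $f(0)=\id_{M}$ satisfies the associativity and unit diagrams of Figure~\ref{fig:module} verbatim, yet gives a left $\bN$-module only when $f$ is a monoid homomorphism. So the associativity of $\xi$ genuinely requires an additional axiom: the module-associativity diagram with a general $\nsO(j_{m+1})$ in the $(m+1)$-th slot and $\zeta_{j_{m+1}-1}$ (rather than $\id_{M}$) in the lower route. Once that strengthened axiom is in hand, your argument closes; but as written, your proof (and, apparently, Figure~\ref{fig:module} itself) is missing precisely the axiom that makes $\xi$ associative, and you should state it explicitly and invoke it at this step rather than the $j_{m+1}=1$ case.
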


We develop some tools to study enveloping algebras in the next
section.  In the meantime, we can identify the enveloping algebra in
some specific examples.

\begin{example}
For $\oO=\oAss$ and $A$ an $\oAss$-algebra (a $\mtimes$-monoid), $U^{\oAss}A$ is $A\mtimes
A^{\op}$, the usual enveloping algebra for a $\mtimes$-monoid.
Viewing $A$ as an $\nsAss$-algebra, $\nU^{\nsAss}A$ is the 
$\mtimes$-monoid $A$.  If $A$ is a $\oCom$-algebra (a
commutative $\mtimes$-monoid), then
$U^{\oCom}A$ makes sense and is also the $\mtimes$-monoid $A$.
\end{example}

\begin{example}
Let $\oL$ denote the Boardman-Vogt linear isometries operad of
Example~\ref{ex:L}.  For an $\oL$-algebra, the underlying space of
$U^{\oL}A$ is the pushout 
\[
\xymatrix{%
\oL(2)\times_{\oL(1)} \oL(0)\ar[d]_{\circ_{1}}
   \ar[r]^-{\id\times\xi_{0}}&\oL(2)\times_{\oL(1)} A\ar[d]\\
\oL(1)\ar[r]&U^{\oL}A
}
\]
where $\circ_{1}$ is the map induced by $1\colon *\to \oL(1)$ and
$\Gamma^{2}_{0,1}$ (as in Construction~\ref{cons:UA}) and the right
action on $\oL(2)$ of $\oL(1)\iso 
\oL(1)\times *$  is via $\Gamma^{2}_{1,1}\circ(\id\times 1)$. 
The inclusions of the $m=0$ and $m=1$ summands induce the map from the pushout
above to the coequalizer
defining $U^{\oL}A$; the inverse isomorphism uses the 
``Hopkins' Lemma'' \cite[I.5.4]{EKMM} isomorphism
\begin{equation}\tag{HL}
\oL(2)\times_{\oL(1)\times \oL(1)}(\oL(i)\times \oL(j))\iso \oL(i+j)
\end{equation}
for $i,j\geq 1$.   The pushout explicitly admits maps in from the
$m=0$ and $m=1$ summands of the coequalizer and for $m>1$, we have the
following map.
\begin{multline*}
\oL(m+1)\times_{\Sigma_{m}} A^{(m)}
\iso \oL(m+1)\times_{\Sigma_{m}\times \oL(1)} (A^{(m)}\times \oL(1))\\
\mathrel{\mathop{\iso}\limits_{\textrm{(HL)}}}
\oL(2)\times_{\oL(1)\times \oL(1)}((\oL(m)\times_{\Sigma_{m}}A^{(m)})\times \oL(1))\\
\overto{\id\times (\xi_{m}\times \id)}
\oL(2)\times_{\oL(1)\times \oL(1)}(A\times \oL(1))
\iso \oL(2)\times_{\oL(1)}A
\end{multline*}
The previous display also indicates how the multiplication of $U^{\oL}A$ works in the
pushout description: it is induced by the map
\begin{multline*}
(\oL(2)\times_{\oL(1)} A)\times (\oL(2)\times_{\oL(1)} A)
\iso (\oL(2)\times \oL(2))\times A^{(2)}
\\\overto{\circ_{2}\times \id}
\oL(3)\times A^{(2)}\to \oL(3)\times_{\Sigma_{2}} A^{(2)}\to
 \oL(2)\times_{\oL(1)}A
\end{multline*}
where the last map is the $m=2$ case of the map above.  It turns out
that the map $U^{\oL}A\to A$ induced by the operadic algebra action
maps is always a weak equivalence. (The proof is not obvious but uses
the ideas from EKMM, especially~\cite[I.8.5,XI.3.1]{EKMM} in the
context of the theory of $\oL(1)$-spaces as in for
example~\cite[\S6]{BasterraMandell-Homology},
\cite[\S4.6]{Blumberg-Progress}, or
\cite[\S4.3]{BlumbergCohenS-THHThom}.)  If we forget the symmetries in
$\oL$ to create a non-symmetric operad $\oL\NS$, then
$\nU^{\oL\NS}A\iso U^{\oL}A$.  Even when $A$ is just an
$\oL\NS$-algebra, $\nU^{\oL\NS}A$ can still be identified as
the same pushout construction pictured above using the analogous
comparison isomorphisms with the coequalizer
definition~\eqref{eq:nUA}.  Analogous formulations also hold in the
context of orthogonal spectra, symmetric spectra, and EKMM $S$-modules
using the operad $\Sigma^{\infty}_{+}\oL$ (in the respective
categories).  In the context of Lewis-May spectra, these observations
are closely related to the foundations of EKMM $S$-modules and the
properties of the smash product ($\sma_{\oL}$, $\sma$, and
$\sma_{A}$); this is the start of a much longer story on monoidal
products and balanced products for $A_{\infty}$ module
categories (see, for example, \cite{Smash}
and~\cite[\S17-18]{BlumbergMandell-TTHH}).
\end{example}

Although in both of the previous two examples, we had an isomorphism
of enveloping algebras for symmetric and non-symmetric constructions,
this is not a general phenomenon, as can be seen, for example, by
comparing $U^{\oAss}$ and $\nU^{\oAss\NS}$ where $\oAss\NS$ is the
non-symmetric operad formed from $\oAss$ by forgetting the symmetry.
(In this style of notation, $\nsAss=\oCom\NS$.)  

The left module enveloping algebra construction for the non-symmetric
little $1$-cubes operad, $\nU^{\nsC_{1}}(-)$, also admits a concrete
description~\cite[\S2]{Smash}, which we review in
Section~\ref{sec:rectass}.  It shares the feature with the
previous two examples that for any $\nsC_{1}$-algebra $A$,
$\nU^{\nsC_{1}}A$ is weakly equivalent to $A$ (see
[\textit{ibid.},1.1] or Proposition~\ref{prop:UAhty}).

Given Propositions~\ref{prop:modulecat} and~\ref{prop:modulecat2}, the
homotopy theory of modules over operadic algebras reduces to (1) the
homotopy theory of modules over $\mtimes$-monoids and (2) the
homotopy theory of $U^{\oO}A$ (or $\nU^{\nsO}A$) as a functor of
$\oO$ (or $\nsO$) and $A$.  The latter first requires a study of the
homotopy theory of operadic algebras that we review (in part) in the
next few sections before returning to this question in
Corollary~\ref{cor:modcomp}.  On the other hand the homotopy theory of modules
over $\mtimes$-monoids is very straightforward, and we give
a short review of the
main results in the remainder of this section.  
We discuss this in terms of closed model
categories.  (For an overview of closed model categories as a setting
for homotopy theory, we refer the reader to~\cite{DwyerSpalinski}.)  
The following theorem gives a comprehensive result in some categories
of primary interest.

\begin{thm}\label{thm:modone}
Let $(\mC,\mtimes,\mS)$ be the category of simplicial sets, spaces, symmetric
spectra, orthogonal spectra, EKMM $S$-modules, simplicial abelian
groups, chain complexes, or any category of modules over a commutative
monoid object in one of these categories, with the usual
monoidal product and one of the standard cofibrantly generated model
structures.  Let $A$ be a monoid object in $\mC$.  The category 
of $A$-modules is a closed model category with weak equivalences and
fibrations created in $\mC$.
\end{thm}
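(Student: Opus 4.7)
The plan is to apply a standard transfer theorem for cofibrantly generated model structures along the free–forgetful adjunction $F\colon \mC\rightleftarrows A\text{-mod}\colon U$, where $F(X)=A\mtimes X$ is the free left $A$-module functor. This adjunction exists because $\mtimes$ preserves colimits in each variable, so that $F$ is well-defined and automatically left adjoint to the forgetful functor $U$. The generating cofibrations $I$ and generating trivial cofibrations $J$ of $\mC$ produce candidate generating sets $FI$ and $FJ$ for $A\text{-mod}$, and the transferred model structure (if it exists) will have weak equivalences and fibrations created by $U$, exactly as in the statement.

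First I would verify that $A\text{-mod}$ is bicomplete. Limits are created in $\mC$ because $U$ preserves them (being a right adjoint, or by direct inspection: a limit of modules in $\mC$ carries a canonical module structure). Colimits take more care; they are obtained as reflexive coequalizers in $\mC$ of a diagram built from the module action maps and the unit, which exists because $\mtimes$ preserves the relevant colimits. Second, I would invoke the standard recognition theorem (Kan's lemma for cofibrantly generated model categories, or equivalently the Schwede--Shipley transfer result) whose hypotheses reduce to: (a) the domains of $FI$ and $FJ$ are small relative to the $FI$- and $FJ$-cell maps, which follows from smallness of the domains of $I$ and $J$ together with the fact that $U$ preserves filtered colimits; and (b) the acyclicity condition, namely that every map in the class of transfinite compositions of pushouts of maps in $FJ$ is a weak equivalence in $\mC$.

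The main obstacle is condition (b). This is precisely the Schwede--Shipley \emph{monoid axiom}, which asserts that for every trivial cofibration $j$ of $\mC$ and every object $X$, transfinite compositions of pushouts (taken in $\mC$) of maps $X\mtimes j$ are weak equivalences. Once this holds in $\mC$, one notes that a pushout in $A\text{-mod}$ of $F(j) = A\mtimes j$ along a map $F(K)\to M$ is computed in $\mC$ as a pushout of $B\mtimes j$ for some $B$ constructed from $A$ and $M$ (or more directly by a standard filtration of the pushout), so the acyclicity in $A\text{-mod}$ follows from the monoid axiom in $\mC$. For each category listed, the monoid axiom is known: in simplicial sets, spaces, and EKMM $S$-modules one has the stronger fact that $\mtimes$ with any object preserves weak equivalences; in simplicial abelian groups and chain complexes over a commutative ring the monoid axiom holds by a direct filtration argument; and in symmetric spectra and orthogonal spectra the standard (positive) stable model structures are set up precisely so that the monoid axiom holds. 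The final clause of the theorem, about modules over a commutative monoid object $R$, follows by the same argument once one observes that the category of $R$-modules in $\mC$ is itself a cofibrantly generated symmetric monoidal model category (with $\mtimes_R$) satisfying the monoid axiom, so the theorem applied within $R$-modules yields the model structure on $(R\text{-mod})\text{-algebras} \cong A\text{-mod}$.
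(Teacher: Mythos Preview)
Your proposal is correct and follows essentially the same approach the paper indicates: the paper does not give a detailed proof but simply points to \cite[VII\S4]{EKMM} and \cite[2.3, 4.1]{SS-AlgMod}, which is precisely the transfer-along-free/forgetful plus monoid axiom argument you outline.

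One small simplification worth noting: for modules over a monoid (as opposed to algebras over a general operad), the forgetful functor $U$ creates \emph{all} colimits, not just filtered ones, because the monad $A\mtimes(-)$ preserves all colimits. So your reflexive-coequalizer description of colimits, while correct, is more elaborate than needed here, and the acyclicity check is correspondingly cleaner: a pushout in $A\text{-mod}$ of $F(j)=A\mtimes j$ has underlying $\mC$-map exactly a pushout in $\mC$ of $A\mtimes j$, so the monoid axiom applies directly without the ``some $B$'' or filtration detour you mention.
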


The proof in all cases is much like the argument
in~\cite[VII\S4]{EKMM} or~\cite[2.3]{SS-AlgMod}.  Heuristically,
whenever the small object argument applies and $\mtimes$ behaves well
with respect to weak equivalences, pushouts, and sequential or
filtered colimits, a version of the previous theorem should hold.
For an example of a more general statement, see~\cite[4.1]{SS-AlgMod}.

A map of monoid objects $A\to B$ induces an obvious \term{restriction
of scalars} functor from the category of $B$-modules to the category
of $A$-modules.  When $\mC$ admits coequalizers and $\mtimes$ preserves coequalizers in
each variable (as is the case in the examples in the previous
theorem), the restriction of scalars functor admits a left adjoint \term{extension
of scalars} functor $B\mtimes_{A}(-)$ which on the underlying objects
is constructed as the coequalizer
\[
\xymatrix@C-1pc{%
B\mtimes A\mtimes M\ar@<-.5ex>[r]\ar@<.5ex>[r]&B\mtimes M\ar[r]&B\mtimes_{A}M,
}
\]
where one map is induced by the $A$-action on $M$ and the other by the
$A$ action on $B$ (induced by the map of monoid objects).  In the case
when the categories of modules have closed model structures with weak
equivalences and fibrations created in the underlying category $\mC$,
this adjunction is automatically a Quillen adjunction, which implies
a derived adjunction on homotopy categories.  When the map $A\to B$ is
a weak equivalence, we can often expect the Quillen adjunction to be a
Quillen equivalence and induce an equivalence of homotopy categories;
this is in particular the case in the setting of the previous theorem.

\begin{thm}
Let $\mC$ be one of the symmetric monoidal model categories of
Theorem~\ref{thm:modone}.  A weak equivalence of monoid objects
induces a Quillen equivalence on categories of modules.
\end{thm}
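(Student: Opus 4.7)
The plan is first to observe that $(\phi_{!},\phi^{*})=(B\mtimes_{A}(-),\phi^{*})$ is a Quillen adjunction essentially by construction: Theorem~\ref{thm:modone} tells us that weak equivalences and fibrations of modules are created in $\mC$, so restriction $\phi^{*}$ preserves both classes, and hence its left adjoint preserves cofibrations and acyclic cofibrations. Moreover $\phi^{*}$ preserves \emph{and} reflects all weak equivalences (not merely those between fibrant objects), so to upgrade to a Quillen equivalence it suffices to verify that for every cofibrant $A$-module $M$ the unit
\[
\eta_{M}\colon M\to \phi^{*}\phi_{!}M\iso B\mtimes_{A}M
\]
is a weak equivalence in $\mC$.

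I would prove this by cell induction on $M$ along the generating cofibrations of the transferred model structure, which have the form $A\mtimes i$ for $i\colon X\to Y$ a generating cofibration of $\mC$. For a free $A$-module $A\mtimes X$ with $X$ cofibrant, there is a canonical isomorphism $B\mtimes_{A}(A\mtimes X)\iso B\mtimes X$ under which $\eta_{A\mtimes X}$ identifies with $\phi\mtimes \id_{X}$; this is a weak equivalence because $\phi$ is and because tensoring a weak equivalence with a cofibrant object preserves weak equivalences in each category of Theorem~\ref{thm:modone} (the pushout--product/monoid axiom built into the standard monoidal model structures). Since $\phi_{!}$ is a left adjoint it commutes with the pushouts and transfinite colimits out of which any cofibrant $A$-module is built from free ones, so the induction reduces to checking that the property ``$\eta$ is a weak equivalence'' is preserved by pushouts along the $A\mtimes i$ and by transfinite compositions of such.

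The main obstacle is the pushout step, because pushouts in $\mC$ do not in general preserve weak equivalences. The key input is that in each of the categories of Theorem~\ref{thm:modone} the underlying map in $\mC$ of a cofibration of $A$-modules with cofibrant source is sufficiently well-behaved for pushouts along it to preserve weak equivalences---this is essentially Schwede--Shipley's monoid axiom~\cite{SS-AlgMod} together with left properness (or, in EKMM $S$-modules, the good behavior of CW cell attachments as in~\cite[VII\S4]{EKMM}). Granted this, the gluing lemma applied to the attaching pushout square and its image under $\phi_{!}$ yields the inductive step, while retracts and transfinite compositions of weak equivalences between cofibrant objects are handled routinely, completing the proof.
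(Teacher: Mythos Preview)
Your argument is correct and is the standard one. The paper, however, does not actually prove this theorem: it simply states the result and refers the reader to~\cite{LewisMandell-MMMC} (Theorem~8.3 and the ``Extension of scalars'' discussion) for a proof in greater generality. So there is nothing to compare directly.

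That said, your cell-induction strategy is exactly the argument the paper sketches later for the closely related Corollary~\ref{cor:modcomp}: there the author notes that both adjoints ``preserve coproducts, homotopy cofiber sequences, and sequential homotopy colimits up to weak equivalence'' and concludes that the unit is a weak equivalence on every cofibrant module. Your version unpacks this into an explicit induction over cell attachments, with the free-module base case reducing to $\phi\mtimes\id_{X}$ and the inductive step handled by a gluing-lemma argument. One small point worth tightening: what you really need at the pushout step is that the maps $A\mtimes i$ and $B\mtimes i$ (for $i$ a generating cofibration of $\mC$) are ``good enough'' that pushouts along them in $\mC$ are homotopy pushouts---in the listed categories this holds because such maps are Hurewicz cofibrations (or monomorphisms of simplicial sets), which is slightly more specific than invoking the monoid axiom per se. With that clarification your proof is complete and in line with how the paper treats the analogous situation.
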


Again, significantly more general results hold; see, for
example, \cite{LewisMandell-MMMC}, especially Theorem~8.3 and the
subsection of Section~1 entitled ``Extension of scalars''.

\section{Limits and Colimits in Categories of Operadic Algebras}
\label{sec:limit}

Before going on to the homotopy theory of categories of operadic
algebras, we say a few words about certain constructions,
limits and colimits in this section, and geometric realization in the
next section.  While limits of operadic algebras are
pretty straightforward (as explained below), colimits tend to be more
complicated and we take some space to describe in detail what certain
colimits look like.  

We start with limits.  Let $D\colon \aD\to \mC[\oO]$ be a diagram,
i.e., a functor from a small category $\aD$, where $\mC$ is a
symmetric monoidal category and $\oO$ is an operad in $\mC$.  By
neglect of structure, we can regard $D$ as a diagram in $\mC$, and
suppose the limit $L$ exists in $\mC$.  Then for each $d\in \aD$,
we have the canonical map $L\to D(d)$, and using the $\oO$-algebra
structure map for $D(d)$, we get a map
\[
\oO(m)\mtimes L^{(m)}\to \oO(m)\mtimes D(d)^{(m)}\to D(d).
\]
These maps satisfy the required compatibility to define a map
\[
\oO(m)\mtimes L^{(m)}\to L,
\]
which together are easily verified to provide structure maps
for an $\oO$-algebra structure on $L$. This $\oO$-algebra structure
has the universal property for the limit of $D$ in $\mC[\oO]$.

\begin{prop}
For any symmetric monoidal category $\mC$, any operad $\oO$ in $\mC$,
and any diagram of $\oO$-algebras, if the limit exists in $\mC$, then
it has a canonical $\oO$-algebra structure that gives the limit in $\mC[\oO]$.
\end{prop}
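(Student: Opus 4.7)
The plan is to follow the sketch already indicated in the paragraph preceding the proposition and make the constructions and verifications precise using the universal property of the limit. Let $L = \lim_{\aD} D$ in $\mC$ with projections $\pi_{d}\colon L\to D(d)$, and write $\xi^{d}_{m}\colon \oO(m)\mtimes D(d)^{(m)}\to D(d)$ for the structure maps of $D(d)$.

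First I would construct the structure maps on $L$. For each $m$ and each $d\in \aD$, form the composite
\[
\oO(m)\mtimes L^{(m)}\overto{\id\mtimes \pi_{d}^{(m)}}\oO(m)\mtimes D(d)^{(m)}\overto{\xi^{d}_{m}}D(d).
\]
For any morphism $f\colon d\to d'$ in $\aD$, the fact that $D(f)$ is a map of $\oO$-algebras combined with $D(f)\circ \pi_{d}=\pi_{d'}$ shows that these composites form a cone over $D$, so by the universal property of $L$ in $\mC$ they factor uniquely through a map $\xi^{L}_{m}\colon \oO(m)\mtimes L^{(m)}\to L$ with $\pi_{d}\circ \xi^{L}_{m}$ equal to the composite above for every $d$.

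Next I would verify that $\{\xi^{L}_{m}\}$ satisfies the axioms of Definition~\ref{def:algebra}, namely equivariance and the associativity and unit diagrams. By the uniqueness clause in the universal property of $L$, it suffices to check each axiom after post-composing with every $\pi_{d}$, and in each case this reduces — using the naturality of $\mtimes$, the compatibility $\pi_{d}\circ \xi^{L}_{m}=\xi^{d}_{m}\circ (\id\mtimes \pi_{d}^{(m)})$, and the fact that $L^{(m)}\to D(d)^{(m)}$ is just $\pi_{d}^{(m)}$ — to the corresponding axiom for $D(d)$. This step is routine but is the bulk of the work; the potential obstacle is purely bookkeeping, ensuring the diagrams in Definition~\ref{def:algebra} reduce cleanly, but since the projections intertwine all the structure they do.

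Finally I would check that $(L,\{\xi^{L}_{m}\})$ has the universal property for the limit in $\mC[\oO]$. Given an $\oO$-algebra $(A,\{\xi^{A}_{m}\})$ and a cone $f_{d}\colon A\to D(d)$ in $\mC[\oO]$, the underlying $\mC$-cone produces a unique $f\colon A\to L$ with $\pi_{d}\circ f=f_{d}$. To see $f$ is a map of $\oO$-algebras, one compares $\xi^{L}_{m}\circ (\id\mtimes f^{(m)})$ and $f\circ \xi^{A}_{m}$ by composing with each $\pi_{d}$: both sides become $f_{d}\circ \xi^{A}_{m}=\xi^{d}_{m}\circ (\id\mtimes f_{d}^{(m)})$, so they agree by the universal property of $L$. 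Conversely any $\oO$-algebra map $A\to L$ gives such a cone by post-composing with the $\pi_{d}$ (which are algebra maps, again by construction and the universal property applied axiom-by-axiom). This establishes the bijection of cones and completes the proof.
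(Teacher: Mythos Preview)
Your proof is correct and follows exactly the approach sketched in the paper, simply filling in the details the paper leaves as ``easily verified'' and the claim that the resulting algebra structure has the limit universal property. There is nothing to add.
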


We cannot expect general colimits of operadic algebras to be formed in
the underlying category, as can be seen from the examples of
coproducts of $\mtimes$-monoids ($\oAss$-algebras) or of commutative
$\mtimes$-monoids ($\oCom$-algebras).  The discussion of colimits
simplifies if we assume that $\mC$ has countable colimits and that
$\mtimes$ preserves countable colimits in each variable, so that
Proposition~\ref{prop:algmon} holds and the category of $\oO$-algebras
is the category of algebras over the monad $\bO$.  The main technical
tool in this case is the following proposition; because we have
assumed in particular that $\mtimes$ preserves coequalizers
in each variable, it follows that the $m$th $\mtimes$-power functor
preserves reflexive coequalizers (see \cite[II.7.2]{EKMM} for a proof)
and the filtered colimits that exist (by an easy cofinality argument).

\begin{prop}\label{prop:reflex}
If $\mC$ satisfies the hypotheses of Proposition~\ref{prop:monad},
then for any operad $\oO$, the monad $\bO$ preserves reflexive
coequalizers in $\mC$ and the filtered colimits that exist in $\mC$. 
\end{prop}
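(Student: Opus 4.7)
The plan is to reduce to showing that the iterated monoidal power functor $X \mapsto X^{(m)}$ preserves the relevant colimits, and then handle the two cases (reflexive coequalizers, filtered colimits) separately.

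First I would break up $\bO$ into its summands. Since $\bO X = \coprod_m \oO(m) \mtimes_{\Sigma_m} X^{(m)}$ and coproducts commute with all colimits in $\mC$, it suffices to prove that each summand $X \mapsto \oO(m) \mtimes_{\Sigma_m} X^{(m)}$ preserves reflexive coequalizers and the filtered colimits that exist in $\mC$. The coinvariant functor $(-)/\Sigma_m$ is itself a colimit (the coequalizer of the $\Sigma_m$-action maps), so it commutes with all colimits; and $\oO(m) \mtimes (-)$ preserves all colimits by the standing hypothesis that $\mtimes$ preserves countable colimits in each variable. Thus everything reduces to showing that $X \mapsto X^{(m)}$ preserves reflexive coequalizers and the filtered colimits that exist.

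For filtered colimits, I would proceed by induction on $m$, the base case $m=1$ being trivial. The inductive step uses that $\mtimes$ preserves filtered colimits in each variable separately, combined with the standard cofinality fact that the diagonal $\aD \to \aD \times \aD$ is final for any filtered category $\aD$; this shows that the binary functor $(X,Y) \mapsto X \mtimes Y$ preserves filtered colimits jointly, and then iterating yields the result for $X^{(m)}$.

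For reflexive coequalizers, this is the content of \cite[II.7.2]{EKMM} and the argument I would give follows that reference. Given a reflexive pair $f,g \colon X_1 \toto X_0$ with common section $s \colon X_0 \to X_1$ and coequalizer $X$, I would show by induction on $m$ that the induced diagram $X_1^{(m)} \toto X_0^{(m)} \to X^{(m)}$ is a coequalizer. The key technical step, and the main obstacle, is establishing that $X_0^{(k)} \mtimes X_1 \mtimes X_0^{(m-k-1)} \toto X_0^{(m)} \to X_0^{(k)} \mtimes X \mtimes X_0^{(m-k-1)}$ is a coequalizer using coequalizer preservation in a single variable, and then showing that these $m$ separate coequalizer presentations of $X_0^{(m)} \to X^{(m)}$ can be combined into a single coequalizer presentation using the reflexive pair $X_1^{(m)} \toto X_0^{(m)}$. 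This combination step is precisely where reflexivity is essential: the common section $s$ produces the comparison maps that identify the individual one-variable coequalizers with the iterated coequalizer, and a standard $3\times 3$-style diagram chase (or equivalently a small colimit interchange argument) completes the identification. Without reflexivity the argument fails, which is why only reflexive coequalizers (not arbitrary ones) are preserved in general.
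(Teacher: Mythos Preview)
Your proposal is correct and follows essentially the same approach as the paper: reduce to showing that the $m$th $\mtimes$-power functor preserves reflexive coequalizers (citing \cite[II.7.2]{EKMM}) and filtered colimits (via the cofinality of the diagonal). The paper gives only a one-sentence justification before the proposition rather than a formal proof, so your writeup is in fact more detailed than the paper's own treatment.
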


Recall that a reflexive coequalizer is a coequalizer
\[
\xymatrix@C-1pc{%
X\ar@<.5ex>[r]^{a}\ar@<-.5ex>[r]_{b}&Y\ar[r]^{c}&C
}
\]
where there exists a map $r\colon Y\to X$ such that $a\circ r=\id_{Y}$
and $b\circ r=\id_{Y}$; $r$ is called a \term{reflexion}.  The
proposition says that if the above coequalizer exists in $\mC$ and is
reflexive then the diagram obtained by applying $\bO$
\[
\xymatrix@C-1pc{%
\bO X\ar@<.5ex>[r]^{\bO a}\ar@<-.5ex>[r]_{\bO b}&\bO Y\ar[r]^{\bO c}&\bO C
}
\]
is also a (reflexive) coequalizer diagram in $\mC$.  Now suppose that 
$a$ and $b$ are maps of $\oO$-algebras.  Then the diagrams 
\[
\xymatrix@-1pc{%
\bO X\ar[r]^{\bO a}\ar[d]&\bO Y\ar[d]
&&\bO X \ar[r]^{\bO b}\ar[d]&\bO Y\ar[d]\\
X\ar[r]_{a}&Y&&X\ar[r]_{b}&Y
}
\]
commute (where the vertical maps are the $\oO$-algebra structure maps)
and we get an induced map 
\[
\bO C\to C.
\]
Repeating this for $\bO X\toto \bO Y$ and the two maps $\bO\bO X\toto
\bO\bO Y$ to $\bO X\toto \bO Y$, we see that the map $\bO C\to C$
constructed above is an $\oO$-algebra structure map and an easy check
of universal properties shows that $C$ with this $\oO$-algebra
structure is the coequalizer in $\mC[\oO]$.  This shows that if a pair
of parallel arrows in $\mC[\oO]$ has a reflexion in $\mC$, then the
coequalizer in $\mC$ has the canonical structure of an $\oO$-algebra
and is the coequalizer in $\mC[\oO]$. 

We can turn the observation in the previous paragraph into a
construction of colimits of arbitrary shapes in $\mC[\oO]$.  Given a
diagram $D\colon \aD\to \mC[\oO]$, assume that the colimit of the
underlying functor to $\mC$ exists, and denote it by
$\colim^{\mC}D$.  If $\colim^{\mC}\bO D$ also exits, then we get a
pair of parallel arrows
\begin{equation}\label{eq:coeq}
\xymatrix@C-1pc{%
\bO(\colim^{\mC} \bO D)\ar@<.5ex>[r]\ar@<-.5ex>[r]&\bO(\colim^{\mC} D)
}
\end{equation}
where one arrow is induced by the $\oO$-algebra structure maps $\bO
D(d)\to D(d)$ and the other is the composite
\[
\bO(\colim^{\mC} \bO D)\overto{\bO i} \bO\bO(\colim^{\mC} D)\overto{\mu} \bO(\colim^{\mC} D)
\]
where $\mu$ is the monadic multiplication $\bO\bO\to \bO$ and 
\[
i\colon \colim^{\mC} \bO D\to \bO(\colim^{\mC} D)
\]
is the map assembled from the maps $\bO D(d)\to \bO(\colim^{\mC} D)$
induced by applying $\bO$ to the canonical maps $D(d)\to \colim^{\mC}
D$.  We also have a reflexion
\[
\bO(\colim^{\mC} D)\to \bO(\colim^{\mC} \bO D)
\]
induced by the unit map $D(d)\to \bO D(d)$.  Thus, the coequalizer
of~\eqref{eq:coeq} in $\mC$ has the canonical structure of an
$\oO$-algebra which provides the coequalizer in $\mC[\oO]$; a check of
universal properties shows that the coequalizer is the colimit in
$\mC[\oO]$ of~$D$.

\begin{prop}
Assume $\mC$ satisfies the hypotheses of
Proposition~\ref{prop:monad}.  For any operad $\oO$ and any diagram
$D\colon \aD\to \mC[\oO]$, if the colimit of $D$ and the colimit
of $\bO D$ exist in $\mC$, then the colimit of $D$ exists in
$\mC[\oO]$ and is given by the coequalizer of the reflexive pair
displayed in~\eqref{eq:coeq}.
\end{prop}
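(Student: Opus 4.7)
The plan is to realize $\colim^{\mC[\oO]} D$ as a reflexive coequalizer in $\mC[\oO]$ and then read off existence and the explicit formula using the construction outlined just before the statement. First I would observe that both objects in~\eqref{eq:coeq} are free $\oO$-algebras (the objects $\bO(\colim^{\mC}\bO D)$ and $\bO(\colim^{\mC}D)$) and that the two parallel arrows are in fact maps of $\oO$-algebras: for the arrow built from the structure maps $\bO D(d)\to D(d)$, this is the free extension of a map in $\mC$; for the composite $\mu\circ \bO i$, this is a map of free $\oO$-algebras because $\mu$ equips $\bO Y$ with its canonical $\oO$-algebra structure for any $Y$.

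Next I would exhibit a reflexion of~\eqref{eq:coeq} in $\mC[\oO]$. The unit $\eta\colon D(d)\to \bO D(d)$ assembles (via the universal property of the colimit in $\mC$) to a map $\colim^{\mC}D\to \colim^{\mC}\bO D$, which by freeness extends to a map of $\oO$-algebras $\bO(\colim^{\mC}D)\to \bO(\colim^{\mC}\bO D)$. A short diagram chase using the monad unit identities (in particular $\xi\circ\eta=\id$ on each $D(d)$ and $\mu\circ\bO\eta=\id$) shows that this is a common section of the two parallel arrows.

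At this point I invoke the discussion immediately preceding the statement: since the parallel arrows in~\eqref{eq:coeq} admit a reflexion in $\mC[\oO]$ (hence certainly in $\mC$), and since their underlying coequalizer in $\mC$ exists (because $\mC$ has countable colimits and in particular coequalizers — alternatively one can verify this using that coequalizers of pairs with a common section can be built from the colimits already assumed to exist), Proposition~\ref{prop:reflex} and the paragraph following it show that this coequalizer inherits a canonical $\oO$-algebra structure and provides the coequalizer in $\mC[\oO]$.

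Finally I would verify the universal property, which is the only place where real work happens. Given an $\oO$-algebra $B$ equipped with a compatible cocone $D(d)\to B$ in $\mC[\oO]$, the underlying maps assemble to $\colim^{\mC}D\to B$, which induces $\bO(\colim^{\mC}D)\to \bO B\to B$; I would then check that this map coequalizes~\eqref{eq:coeq} (one composite uses the $\oO$-algebra action on $B$, the other uses the operadic multiplication and then the action, and these agree by the $\oO$-algebra axiom \ref{def:algebra}.(i) for $B$). The main obstacle, and the only nontrivial bookkeeping, is this last check — tracing the two composites through the defining diagrams of the monad and the $\oO$-algebra structure — together with confirming that the resulting factorization is unique, which follows because any map of $\oO$-algebras out of the coequalizer is determined by its restriction to $\colim^{\mC}D$.
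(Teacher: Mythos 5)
Your argument is correct and is essentially the paper's own: the paper assembles the pair~\eqref{eq:coeq}, notes the reflexion coming from the monad unit, invokes Proposition~\ref{prop:reflex} (via the discussion preceding the statement) to put an $\oO$-algebra structure on the $\mC$-coequalizer and identify it as the coequalizer in $\mC[\oO]$, and then closes with ``a check of universal properties,'' which is precisely the verification you spell out. One small imprecision worth correcting: in checking that the induced map $\bO(\colim^{\mC}D)\to B$ coequalizes~\eqref{eq:coeq}, the ingredient that does the real work is not the associativity axiom~\ref{def:algebra}.(i) for $B$ alone (that, together with naturality of $\mu$, only reduces the two composites to a common shape) but the hypothesis that each leg $D(d)\to B$ of the cocone is a map of $\oO$-algebras; restricting to the $d$-indexed pieces of $\colim^{\mC}\bO D$ turns the required equality into exactly the square in Definition~\ref{def:algebra} saying that $D(d)\to B$ commutes with the action maps.
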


For example, the coproduct $A\amalg^{\mC[\oO]} B$ in $\mC[\oO]$ can be constructed as the
coequalizer 
\[
\xymatrix@C-1pc{%
\bO(\bO A \amalg \bO B)\ar@<.5ex>[r]\ar@<-.5ex>[r]
&\bO(A \amalg B)\ar[r]&A\amalg^{\mC[\oO]}B.
}
\]
In the case when $B=\bO X$ for some $X$ in $\mC$, we can say more by
recognizing that the category of $\oO$-algebras under $A$ is itself
the category of algebras over an operad.  

\begin{cons}[The enveloping operad]\label{cons:eop}
For
$m\geq 0$, define $\oU^{\oO}_{A}(m)$ by the coequalizer diagram
\begin{equation*}\label{eq:env}
\xymatrix@C-1pc{%
\myop\coprod_{\ell=0}^{\infty}\oO(\ell+m)\mtimes_{\Sigma_{\ell}} (\bO A)^{(\ell)}\ar@<.5ex>[r]\ar@<-.5ex>[r]
&\myop\coprod_{\ell=0}^{\infty}\oO(\ell+m)\mtimes_{\Sigma_{\ell}} A^{(\ell)}\ar[r]
&\oU^{\oO}_{A}(m)
}
\end{equation*}
where one arrow is induced by the operadic multiplication 
\[
\Gamma^{\ell+m}_{j_{1},\dotsc,j_{\ell},1,\dotsc,1}\colon  \oO(\ell+m)\mtimes
\oO(j_{1})\mtimes \dotsb \mtimes \oO(j_{\ell})\mtimes 
\mS\mtimes\dotsb \mtimes\mS\to \oO(j+m)
\]
and the other by the $\oO$-algebra action map $\bO A\to A$.  We think
of the $\ell$ factors of $A$ (or $\bO A$) as being associated with the
first $\ell$ inputs of $\oO(\ell+m)$, leaving the last $m$ inputs open.  We
then have a $\Sigma_{m}$-action induced from the $\Sigma_{m}$-action
on $\oO(\ell+m)$ on the open inputs, unit map $1\colon \mS\to
\oU^{\oO}_{A}(1)$ induced by the unit map of $\oO$ (on the summand
$\ell=0$), and operadic composition $\Gamma$ induced by applying the
operadic multiplication of $\oO$ using the open inputs.  
\end{cons}

This operad is called the \term{enveloping operad} of $A$ and
generalizes the enveloping algebra $U^{\oO}A$ of
Construction~\ref{cons:UA}: for $m=1$, $\oU^{\oO}_{A}(1)$ is precisely the
coequalizer defining $U^{\oO}A$ and the operad unit and multiplication
$\Gamma^{1}_{1}$ coincide with the $\mtimes$-monoid unit and
multiplication. 

To return to the discussion of the category of $\oO$-algebras under $A$,
we note that for $m=0$, the coequalizer in Construction~\ref{cons:eop} is
\[
\xymatrix@C-1pc{%
\bO \bO A\ar@<.5ex>[r]\ar@<-.5ex>[r]&\bO A\ar[r]&\oU^{\oO}_{A}(0),
}
\]
giving a canonical isomorphism $A\to \oU^{\oO}_{A}(0)$, and so a
$\oU^{\oO}_{A}$-algebra $T$ comes with a structure map $A\to T$.
Looking at the summands with $\ell=0$ above, we get a map of operads
$\oO\to \oU^{\oO}_{A}$, giving $T$ an
$\oO$-algebra structure; the map $A\to T$ is a map of $\oO$-algebras.
On the other hand, given an $\oO$-algebra $B$ and a map of
$\oO$-algebras $A\to B$, we have maps
\[
\oO(\ell+m)\mtimes A^{(\ell)} \mtimes B^{(m)}\to 
\oO(\ell+m)\mtimes B^{(\ell)} \mtimes B^{(m)} \to B
\]
which together induce maps $\oU^{\oO}_{A}(m)\mtimes B^{(m)}\to B$ that are
easily checked to provide
$\oU^{\oO}_{A}$-algebra structure maps.  This gives a bijection
between the structure of an $\oO$-algebra under $A$ and the structure
of a $\oU^{\oO}_{A}$-algebra.

\begin{prop}
When $\mC$ satisfies the hypotheses of Proposition~\ref{prop:monad},
then for an object $X$ of $\mC$, the set of $\oU^{\oO}_{A}$-algebra
structures on $X$ is in bijective correspondence with the set of
ordered pairs consisting of an $\oO$-algebra structure on $X$ and a
map of $\oO$-algebras $A\to X$ for that structure.
\end{prop}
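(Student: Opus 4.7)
The plan is to turn the bijection already sketched in the paragraphs preceding the statement into a rigorous argument, checking that both directions are well-defined and mutually inverse.

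First I would construct the forward map. Given a $\oU^{\oO}_{A}$-algebra structure on $X$, the $\ell = 0$ summands in the coequalizer defining $\oU^{\oO}_{A}(m)$ give a map of operads $\oO \to \oU^{\oO}_{A}$, and restricting structure along this map endows $X$ with an $\oO$-algebra structure. Separately, the zero-ary case $m = 0$ of the coequalizer collapses to the reflexive coequalizer $\bO\bO A \rightrightarrows \bO A$, whose coequalizer computes $A$ itself since $A$ is an $\oO$-algebra (via Proposition~\ref{prop:reflex} and the fact that $A$ is a split coequalizer of the free monadic resolution); thus the zero-ary structure map $\oU^{\oO}_{A}(0) \to X$ yields a canonical map $A \to X$. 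A straightforward diagram chase shows this map commutes with the $\oO$-algebra action maps on both sides, so it is a map of $\oO$-algebras.

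Next I would construct the inverse. Given $(X,\xi)$ an $\oO$-algebra with $f \colon A \to X$ a map of $\oO$-algebras, for each $m$ form the composite
\[
\oO(\ell+m) \mtimes A^{(\ell)} \mtimes X^{(m)} \overto{\id \mtimes f^{(\ell)} \mtimes \id} \oO(\ell+m) \mtimes X^{(\ell+m)} \overto{\xi_{\ell+m}} X
\]
for each $\ell \geq 0$. These assemble into a map out of $\coprod_{\ell} \oO(\ell+m) \mtimes_{\Sigma_{\ell}} A^{(\ell)} \mtimes X^{(m)}$, and the key verification is that this map coequalizes the two parallel arrows in the coequalizer defining $\oU^{\oO}_{A}(m) \mtimes X^{(m)}$. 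This uses exactly that $f$ is an $\oO$-algebra map (to coequalize the $\bO A \to A$ arrow) together with the associativity condition \ref{def:algebra}.(i) for $\xi$ (to coequalize the operad-composition arrow). This produces $\tilde{\xi}_{m} \colon \oU^{\oO}_{A}(m) \mtimes X^{(m)} \to X$; equivariance and the unit and associativity axioms for a $\oU^{\oO}_{A}$-algebra follow from the same properties for $\oO$ together with the definitions of the $\Sigma_{m}$-action, unit, and composition $\Gamma$ on the enveloping operad.

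Finally, I would check the two assignments are mutually inverse. Starting with a $\oU^{\oO}_{A}$-algebra structure, extracting the $\oO$-algebra structure and map $A \to X$ and then reassembling recovers the original structure because the $\oU^{\oO}_{A}(m) \mtimes X^{(m)} \to X$ map is determined by its restriction along the canonical surjections from $\oO(\ell+m) \mtimes_{\Sigma_{\ell}} A^{(\ell)}$. Conversely, starting from $(\xi, f)$, the extracted $\oO$-algebra structure (from the $\ell = 0$ summand) is $\xi$ by construction, and the extracted map $A \to X$ is $f$ because on the $\ell = 0, m = 0$ piece the composite is $\xi_{1} \circ (1 \mtimes f) = f$. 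The main obstacle is the coequalizer check in the second step: one must verify that the two parallel arrows in Construction~\ref{cons:eop}, tensored with $X^{(m)}$, produce the same composite into $X$, and this is where the hypothesis that $f$ is a map of $\oO$-algebras is essential; everything else is a bookkeeping exercise on associativity and equivariance diagrams.
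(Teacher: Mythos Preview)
Your proposal is correct and follows essentially the same approach as the paper: the paper sketches precisely this argument in the paragraphs immediately preceding the proposition (forward direction via the $\ell=0$ summands and the $m=0$ coequalizer identifying $\oU^{\oO}_{A}(0)\iso A$; inverse direction via the composite $\oO(\ell+m)\mtimes A^{(\ell)}\mtimes X^{(m)}\to X$ using $f$ and $\xi$), leaving the verifications as ``easily checked.'' Your write-up simply supplies those verifications in more detail; the only minor slip is the index in your final sentence (the recovery of $f$ is seen on the $m=0$ case via the $\ell=1$ summand and the unit $1\colon \mS\to \oO(1)$, not literally the ``$\ell=0,m=0$ piece''), but the formula $\xi_{1}\circ(1\mtimes f)=f$ you wrote is the correct check.
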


As a consequence we have the following description of the coproduct of
$\oO$-algebras $A\amalg^{\mC[\oO]} \bO X$, since $A\amalg^{\mC[\oO]}\bO(-)$ is
the left adjoint of the forgetful functor from $\oO$-algebras under
$A$ to $\mC$.

\begin{prop}
When $\mC$ satisfies the hypotheses of Proposition~\ref{prop:monad},
\[
A\amalg^{\mC[\oO]} \bO X\iso \bU^{\oO}_{A}X = 
\myop\coprod_{m=0}^{\infty}\oU^{\oO}_{A}(m)\mtimes_{\Sigma_{m}}X^{(m)}
\]
(where the coproduct symbol undecorated by a category denotes coproduct
in $\mC$).
\end{prop}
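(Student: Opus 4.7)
The plan is to identify both sides of the claimed isomorphism as values of left adjoints to the same forgetful functor and then invoke the uniqueness of adjoints. Throughout, I use $\bU^{\oO}_{A}$ ambiguously for both the enveloping operad (as in Construction~\ref{cons:eop}) and the associated monad (as in Notation~\ref{notn:monad} applied to that operad).

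First I would observe that by Proposition~\ref{prop:algmon} applied to the operad $\oU^{\oO}_{A}$, the category $\mC[\oU^{\oO}_{A}]$ of algebras over the operad coincides with the category of algebras over the monad $\bU^{\oO}_{A}$, so the forgetful functor $\mC[\oU^{\oO}_{A}]\to \mC$ has left adjoint $X\mapsto \bU^{\oO}_{A}X$. Next, by the preceding proposition, there is an isomorphism of categories $\mC[\oU^{\oO}_{A}]\iso A/\mC[\oO]$, and the main verification I would carry out is that this isomorphism intertwines the two forgetful functors to $\mC$. This is visible from the construction: a $\oU^{\oO}_{A}$-algebra structure on $X$ packages an $\oO$-algebra structure on $X$ together with a map $A\to X$ of $\oO$-algebras (induced by the $\ell=0$ summands together with the canonical isomorphism $A\iso \oU^{\oO}_{A}(0)$), and both correspondences leave the underlying object $X$ of $\mC$ unchanged. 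Thus $\bU^{\oO}_{A}(-)$ is also the left adjoint of the composite forgetful functor $A/\mC[\oO]\to \mC[\oO]\to \mC$.

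On the other hand, I would factor $X\mapsto A\amalg^{\mC[\oO]}\bO X$ as the composite of $\bO\colon \mC\to \mC[\oO]$ followed by $A\amalg^{\mC[\oO]}(-)\colon \mC[\oO]\to A/\mC[\oO]$. The first functor is left adjoint to the forgetful functor $\mC[\oO]\to \mC$ by Proposition~\ref{prop:algmon}, and the second is left adjoint to the forgetful functor $A/\mC[\oO]\to \mC[\oO]$ by the universal property of the coproduct in a slice category (a map $A\amalg^{\mC[\oO]}B\to C$ under $A$ is the same as a map $B\to C$ in $\mC[\oO]$). Composing adjoints, $A\amalg^{\mC[\oO]}\bO(-)$ is left adjoint to the very same composite forgetful functor $A/\mC[\oO]\to \mC$.

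Applying uniqueness of left adjoints up to canonical natural isomorphism then yields the desired natural isomorphism $A\amalg^{\mC[\oO]}\bO X\iso \bU^{\oO}_{A}X$. The only genuine content, and the step I expect to require the most care, is the verification in the first paragraph that the categorical isomorphism supplied by the previous proposition is compatible with the underlying-object functors; once that is in hand the rest is purely formal.
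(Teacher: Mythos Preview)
Your proposal is correct and follows exactly the approach the paper takes: the paper introduces the proposition with the sentence ``since $A\amalg^{\mC[\oO]}\bO(-)$ is the left adjoint of the forgetful functor from $\oO$-algebras under $A$ to $\mC$,'' which is precisely the uniqueness-of-left-adjoints argument you spell out. Your writeup simply makes explicit the compatibility check (that the isomorphism $\mC[\oU^{\oO}_{A}]\iso A/\mC[\oO]$ commutes with the forgetful functors to $\mC$) that the paper leaves implicit.
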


The decomposition above can be useful even without further
information on $\oU^{\oO}_{A}$, but in fact we can be more concrete
about what $\oU^{\oO}_{A}$ looks like in the case when $A$ is built up
iteratively from pushouts of free objects in $\mC[\oO]$.  As a base
case, an easy calculation gives
\[
\oU^{\oO}_{\bO X}(m)=\myop\coprod_{\ell=0}^{\infty}\oO(\ell+m)\mtimes_{\Sigma_{\ell}} X^{(\ell)}.
\]
Now suppose $A'=A\amalg^{\mC[\oO]}_{\bO X}\bO Y$ for some maps $X\to
A$ and $X\to Y$ in $\mC$; we can then describe $\oU^{\oO}_{A'}$ in
terms of $\oU^{\oO}_{A}$ and pushouts in $\mC[\oO]$ as follows.  (In
particular, the calculation of $\oU^{\oO}_{A'}(0)$ describes $A'$ in
these terms and the calculation of $\oU^{\oO}_{A'}(1)$ describes $UA'$
in these terms.)  First, using the observations on colimits above, a
little work shows that the coequalizer defining $\oU^{\oO}_{A'}$
simplifies in this case to
\[
\xymatrix@C-1pc{%
\myop\coprod_{\ell=0}^{\infty}\oU^{\oO}_{A}(\ell+m)
  \mtimes_{\Sigma_{\ell}}(X\amalg Y)^{(\ell)}
  \ar@<.5ex>[r]\ar@<-.5ex>[r]
&\myop\coprod_{\ell=0}^{\infty}\oU^{\oO}_{A}(\ell+m)
  \mtimes_{\Sigma_{\ell}}Y^{(\ell)}
   \ar[r]&\oU^{\oO}_{A'}(m)
}
\]
where one map is induced by the map $X\to A$ ($=\oU^{\oO}_{A}(0)$) and
the other is induced by the map $X\to Y$.
We then have a filtration on $\oU^{\oO}_{A'}(m)$ by powers of $Y$;
specifically, define $F^{\kay}\oU^{\oO}_{A'}(m)$ by the coequalizer
\[
\xymatrix@C-1pc{%
\myop\coprod_{\ell=0}^{\kay}\oU^{\oO}_{A}(\ell+m)
  \mtimes_{\Sigma_{\ell}}(X\amalg Y)^{(\ell)}
  \ar@<.5ex>[r]\ar@<-.5ex>[r]
&\myop\coprod_{\ell=0}^{\kay}\oU^{\oO}_{A}(\ell+m)
  \mtimes_{\Sigma_{\ell}}Y^{(\ell)}
   \ar[r]&F^{\kay}\oU^{\oO}_{A'}(m)
}
\]
Then $\colim_{\kay} F^{\kay}\oU^{\oO}_{A'}(m)=\oU^{\oO}_{A'}(m)$.
Comparing the universal properties for $F^{\kay-1}\oU^{\oO}_{A'}(m)$
and $F^{\kay}\oU^{\oO}_{A'}(m)$, we see that
the following diagram is a pushout (in $\mC$).
\[
\xymatrix{%
\oU^{\oO}_{A}(\kay+m)\mtimes_{\Sigma_{\kay-1}}(X\mtimes Y^{(\kay-1)})
  \ar[r]\ar[d]
&\oU^{\oO}_{A}(\kay+m)\mtimes_{\Sigma_{\kay}}Y^{(\kay)}\ar[d]\\
F^{\kay-1}\oU^{\oO}_{A'}(m)\ar[r]&F^{\kay}\oU^{\oO}_{A'}(m)
}
\]

This describes $\oU^{\oO}_{A'}$ in terms of iterated pushouts in
$\mC$, but we can do somewhat better, as can be seen in the example
where $\mC$ is the category of spaces and $X\to Y$ is a closed
inclusion.  In the pushout above, the top horizontal map comes from
the map
\[
\Sigma_{\kay}\times_{\Sigma_{\kay-1}}(X\times Y^{\kay-1})\to Y^{\kay}
\]
which fails to be an inclusion for $\kay>1$ except in trivial cases;
however, the image of this map can be described as an iterated pushout, starting
with $X^{\kay}$ and gluing in higher powers of $Y$.  This works as
well in the general case (which we now return to).

Let $Q^{\kay}_{0}(X\sto Y)=X^{(\kay)}$, an object of $\mC$ with
a $\Sigma_{\kay}$-action and a $\Sigma_{\kay}$-equivariant map to
$Y^{(\kay)}$.  Inductively, for $i>0$, define $Q^{\kay}_{i}(X\sto Y)$
as the pushout 
\begin{equation}\label{eq:Ql}
\begin{gathered}
\xymatrix@C-1ex{%
\Sigma_{\kay}\times_{\Sigma_{\kay-i}\times\Sigma_{i}}
  (X^{(\kay-i)}\mtimes Q^{i}_{i-1}(X\sto Y))\ar[r]\ar[d]&
\Sigma_{\kay}\times_{\Sigma_{\kay-i}\times \Sigma_{i}} 
  (X^{(\kay-i)}\mtimes Y^{(i)})\ar[d]\\
Q^{\kay}_{i-1}(X\sto Y)\ar[r]
&Q^{\kay}_{i}(X\sto Y)
}
\end{gathered}
\end{equation}
with the evident $\Sigma_{\kay}$-action and
$\Sigma_{\kay}$-equivariant map
\[
Q^{\kay}_{i}(X\sto Y)\to Y^{(\kay)}.
\]
Then for all $j>0$, we have a $(\Sigma_{j}\times
\Sigma_{\kay})$-equivariant map 
\[
X^{(j)}\mtimes Q^{\kay}_{i}(X\sto Y)\to Q^{j+\kay}_{i}(X\sto Y)
\]
induced by the map 
\[
X^{(j)}\mtimes X^{(\kay-i)}\mtimes Y^{(i)}\iso X^{(j+\kay-i)}\mtimes Y^{(i)}\to 
   Q^{j+\kay}_{i}(X\sto Y)
\]
and the compatible (inductively defined) map
\[
X^{(j)}\mtimes Q^{\kay}_{i-1}(X\sto Y)\to Q^{j+\kay}_{i-1}(X\sto Y)
\to Q^{j+\kay}_{i}(X\sto Y),
\]
which allows us to continue the induction.  In the case when $\mC$ is
the category of topological spaces and $X\to Y$ is a closed inclusion,
the maps
\[
Q^{\kay}_{0}(X\sto Y)\to \dotsb \to Q^{\kay}_{\kay-1}(X\sto Y)\to Y^{(\kay)}
\]
are closed inclusions with $Q^{\kay}_{i}(X\sto Y)$ the subspace of $Y^{\kay}$
where at most $i$ coordinates are in $Y\setminus X$. In the general case, an
inductive argument shows that the map 
\[
\Sigma_{\kay}\times_{\Sigma_{\kay-i}\times \Sigma_{i}}(X^{(\kay-i)}\mtimes Y^{(i)})
\to Q^{\kay}_{i}(X\sto Y)
\]
is a categorical epimorphism and that the map
\[
\oU^{\oO}_{A}(\kay+m)\mtimes_{\Sigma_{\kay-1}}
  (X\mtimes Y^{(\kay-1)})
\to \oU^{\oO}_{A}(\kay+m)\mtimes_{\Sigma_{\kay}}Q^{\kay}_{\kay-1}(X\sto Y)
\]
is a categorical epimorphism.  Since this factors the map
\[
\oU^{\oO}_{A}(\kay+m)\mtimes_{\Sigma_{\kay-1}}
  (X\mtimes Y^{(\kay-1)})
\to \oU^{\oO}_{A}(\kay+m)\mtimes_{\Sigma_{\kay}}Y^{(\kay)},
\]
we get the following more sophisticated
identification of $F^{\kay}\oU^{\oO}_{A'}(m)$ as a pushout.
\begin{equation}\label{eq:poenv}
\begin{gathered}
\xymatrix{%
\oU^{\oO}_{A}(\kay+m)\mtimes_{\Sigma_{\kay}}Q^{\kay}_{\kay-1}(X\sto Y)
  \ar[r]\ar[d]
&\oU^{\oO}_{A}(\kay+m)\mtimes_{\Sigma_{\kay}}Y^{(\kay)}\ar[d]\\
F^{\kay-1}\oU^{\oO}_{A'}(m)\ar[r]&F^{\kay}\oU^{\oO}_{A'}(m)
}
\end{gathered}
\end{equation}
In practice, the map $Q^{\kay}_{\kay-1}(X\to Y)\to Y^{(\kay)}$ is some
kind of cofibration when $X\to Y$ is nice enough; the above
formulation is then useful for deducing homotopical
information in the presence of cofibrantly generated model category
structures, as discussed in Section~\ref{sec:model}.

\section{Enrichment and Geometric Realization}
\label{sec:enrich}

Categories of operadic algebras in spaces or spectra come with a
canonical enrichment in spaces, i.e., they have mapping spaces and an
intrinsic notion of homotopy.  While more abstract notions of
homotopy, for example, in terms of model structures, now play a
more significant role in homotopy theory, the topological enrichment
provides some powerful tools, including and especially geometric
realization of simplicial objects.

We begin with a general discussion of enrichment of operadic algebra
categories.
When $\mC$ satisfies the hypotheses of Proposition~\ref{prop:monad},
Proposition~\ref{prop:algmon} describes the maps in the category of
$\oO$-algebras as an equalizer:
\[
\xymatrix@C-1pc{%
\mC[\oO](A,B)\ar[r]&\mC(A,B)\ar@<-.5ex>[r]\ar@<.5ex>[r]&\mC(\bO A,B)
}
\]
where one arrow $\mC(A,B)\to \mC(\bO A,B)$ is induced by the action
map $\bO A\to A$ and the other arrow is induced by applying the
functor $\bO\colon \mC(A,B)\to \mC(\bO A,\bO B)$ and then using the
action map $\bO B\to B$.  When $\mC$ is enriched over a complete
symmetric monoidal category (for example, when the mapping sets of
$\mC$ are topologized or simplicial), then $\mC[\oO]$ becomes enriched
exactly when $\bO$ has the structure of an enriched functor, defining
the enrichment of $\mC[\oO]$ by the equalizer above.  Clearly it is
not always possible for $\bO$ to be enriched: in the case when $\mC$
is the category of abelian groups and $\oO=\oAss$ or $\oCom$, then
$\bO$ is not an additive functor so cannot be enriched over abelian
groups; this corresponds to the fact that the category of rings and
the category of commutative rings are not enriched over abelian
groups.  On the other hand, enrichments over spaces and simplicial
sets are usually inherited by algebra categories; the reason, as we
now explain, derives from the fact that spaces and simplicial sets are
cartesian.

For convenience, consider the case when $\mC$ is a closed symmetric
monoidal category, let $\mE,\metimes,\mSE$ be a symmetric monoidal
category (which we will eventually assume to be cartesian), and let
$L\colon \mE\to \mC$ be a strong symmetric monoidal functor that is a
left adjoint; let $R$ denote its right adjoint.  For formal reasons
$R$ is then lax symmetric monoidal and in particular $RF$ provides an
$\mE$-enrichment of $\mC$ (where, as always, $F$ denotes the mapping
object in $\mC$).  These hypotheses are not all necessary but avoid
some review of enriched category theory and concisely state a lot
coherence data that more minimal hypotheses would force us to spell
out.  The iterated symmetric monoidal product in $\mC$ then gives a
multivariable enriched functor
\[
RF(A_{1},B_{1}) \metimes \dotsb \metimes RF(A_{m},B_{m})\to 
RF(A_{1}\mtimes \dotsb \mtimes A_{m},B_{1}\mtimes \dotsb \mtimes B_{m}).
\]
Now assume that $\metimes$ is a cartesian monoidal product, meaning
that it is the categorical product, the unit is the final object,
and the symmetry and unit isomorphisms are the universal ones.  With
this assumption, we have a natural diagonal map $E\to E\metimes E$,
which we can apply in particular to the object $RF(A,B)$ to get a
natural map 
\begin{equation}\label{eq:enr}
RF(A,B)\to RF(A,B)\metimes\dotsb \metimes RF(A,B)\to RF(A^{(m)},B^{(m)}).
\end{equation}
This makes the $m$th $\mtimes$-power into an $\mE$-enriched functor
for $m>0$.  In the case $m=0$, we have the final map 
\[
RF(A,B)\to *\to R\mS\overto{\iso} RF(A^{(0)},B^{(0)}).
\]
From here the rest is easy: 
the $\mtimes,F$ adjunction gives a natural (and $\mE$-natural) map
\[
RF(A^{(m)},B^{(m)})\to RF(\oO(m)\mtimes A^{(m)},\oO(m)\mtimes B^{(m)})
\]
and the composite to $RF(\oO(m)\mtimes
A^{(m)},\oO(m)\mtimes_{\Sigma_{m}} B^{(m)})$  admits a canonical factorization
\[
RF(A,B)\to RF(\oO(m)\mtimes_{\Sigma_{m}} A^{(m)},\oO(m)\mtimes_{\Sigma_{m}} B^{(m)})
\]
since the target is a limit (in $\mE$) that exists by right adjoint
considerations when the quotient
$\oO(m)\mtimes_{\Sigma_{m}} B^{(m)}=(\oO(m)\mtimes
B^{(m)})/\Sigma_{m}$ in $\mC$ exists.  When we assume that $\mC$ has
countable coproducts, composing further into
\[
RF(\oO(m)\mtimes_{\Sigma_{m}} A^{(m)},\bO B),
\]
the countable categorical
product over $m$ exists, giving an $\mE$-natural map
\[
RF(A,B)\to RF(\bO A,\bO B)
\]
which provides the $\mE$-enrichment of $\bO$.  We state this as the
following theorem. 

\begin{thm}\label{thm:enrich}
Let $\mC$ be a closed symmetric monoidal category with countable colimits,
and let $\oO$ be an operad in $\mC$.  Let $\mE$ be a cartesian monoidal
category and let $\mE\to \mC$ be a strong
symmetric monoidal functor with a right adjoint.  Regarding $\mC$ as
$\mE$-enriched over the right adjoint, the category $\mC[\oO]$ of
$\oO$-algebras has a canonical $\mE$-enrichment for which the forgetful
functor $\mC[\oO]\to \mC$ is $\mE$-enriched.
\end{thm}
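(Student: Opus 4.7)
My plan is to reverse-engineer the enriched structure from the equalizer description of $\oO$-algebra maps recalled just before the theorem. Since $\mC$ has countable colimits and is closed, Proposition~\ref{prop:monad} and Proposition~\ref{prop:algmon} apply, so for any $\oO$-algebras $A$ and $B$ in $\mC$ the set of $\oO$-algebra maps fits into the equalizer $\mC[\oO](A,B) \to \mC(A,B) \rightrightarrows \mC(\bO A, B)$. The strategy is: first promote the endofunctor $\bO$ to an $\mE$-enriched functor, then define the mapping object $\mC[\oO](A,B)$ in $\mE$ by the analogous equalizer
\[
\mC[\oO](A,B) \longrightarrow RF(A,B) \rightrightarrows RF(\bO A, B),
\]
which exists since $\mE$ is complete (being cartesian monoidal with a final object and having the structure needed to take equalizers, which we implicitly assume as part of ``cartesian monoidal category''). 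Composition and identities for $\mC[\oO]$ will then be induced from those of $\mC$ (i.e., from $RF$) by the universal property.

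The substantive step, already sketched in the prose, is the construction of the $\mE$-enrichment on $\bO$. First I would build the enriched $m$-fold $\mtimes$-power: the lax symmetric monoidal structure on $R$ (which is automatic since $L$ is strong symmetric monoidal and a left adjoint) gives an $m$-variable enriched map $RF(A_1,B_1)\metimes\dotsb\metimes RF(A_m,B_m)\to RF(A_1\mtimes\dotsb\mtimes A_m,B_1\mtimes\dotsb\mtimes B_m)$, and then precomposing with the diagonal $RF(A,B)\to RF(A,B)^{\metimes m}$ (which uses that $\metimes$ is \emph{cartesian}) yields the enriched $m$-th power. The cartesian hypothesis is essential here; without it there is no canonical diagonal and no such enrichment. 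For $m=0$ the map is the unique map to the terminal object, followed by the unit map of $R$. Then $\oO(m)\mtimes_{\Sigma_m}(-)^{(m)}$ inherits an $\mE$-enriched structure because $(\oO(m)\mtimes A^{(m)})\to(\oO(m)\mtimes B^{(m)})$ is natural in the $m$-th power construction, and the quotient by $\Sigma_m$ is a colimit that the $\mtimes,F$ adjunction turns into a limit on the mapping-object side. Finally, the countable coproduct defining $\bO$ on objects becomes a countable product on the mapping-object side, again by the same adjunction, giving the desired $\mE$-natural map $RF(A,B)\to RF(\bO A, \bO B)$.

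Once $\bO$ is $\mE$-enriched, both parallel arrows in the equalizer defining $\mC[\oO](A,B)$ lift to $\mE$: one uses the action map $\bO A\to A$ post-composed, the other uses $\bO$ applied to morphisms and the action $\bO B\to B$. Forming the equalizer in $\mE$ gives the enriched mapping object. The forgetful functor is $\mE$-enriched essentially by construction: the equalizing map $\mC[\oO](A,B)\to RF(A,B)$ is precisely the underlying morphism component, and $R$ applied to it recovers on underlying sets the usual inclusion of $\mC[\oO](A,B)$ into $\mC(A,B)$.

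The main obstacle is not any single step but the coherent bookkeeping: one must verify that the composition, identities, and unit and associativity axioms for the enrichment on $\mC[\oO]$ are induced from those on $\mC$ (equivalently, those of $RF$) via the universal property of the equalizer. The tool for this is that $R$ preserves limits, so equalizers in $\mE$ compute correctly under the $\mE$-enrichment, and the diagonal maps on $\mE$-objects are associative and unital in the standard way. All of the coherence reduces to coherences that already hold for $\mC$ as a closed symmetric monoidal category plus the standard coherence of the cartesian diagonal; there are no genuinely new identities to verify, merely a systematic check that each structure map respects the equalizer in both source and target.
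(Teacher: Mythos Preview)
Your proposal is correct and follows essentially the same argument as the paper: the paper's proof is precisely the prose preceding the theorem statement, which enriches $\bO$ via the cartesian diagonal on $RF(A,B)$, the lax monoidal structure on $R$, and the $\mtimes,F$ adjunction to handle the $\Sigma_m$-quotient and the coproduct, then defines the enriched hom by the equalizer. Your only added remark is the need for equalizers in $\mE$, which the paper also relies on (it speaks of ``a complete symmetric monoidal category'' in the preceding paragraph) but omits from the formal hypotheses.
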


We apply this now in the discussion of geometric realizations of
(co)simplicial objects.  Let 
$\Top$ denote either the category of spaces or of simplicial sets, and
write $C(-,-)$ for the internal mapping objects in $\Top$. To avoid
awkward circumlocutions, we will refer to objects of $\Top$ as spaces
in either case for the rest of the section.  We now assume that $\mC$
is closed symmetric monoidal and has countable coproducts and that we
have a left adjoint symmetric monoidal functor $L$ from $\Top$ to
$\mC$, as above, so that Theorem~\ref{thm:enrich} applies. We write $R$ for the
right adjoint to $L$ as above, so that $RF(-,-)$
provides mapping spaces for $\mC$.  The category $\mC$ then has
\term{tensors} $X\mtensor T$ and \term{cotensors} $T\pitchfork Y$, defined
by the natural isomorphisms
\begin{align*}
RF(X\mtensor T,-)&\iso C(T,RF(X,-))&&\text{(tensor)}\\
RF(-,T\pitchfork Y))&\iso C(T,RF(-,Y))&&\text{(cotensor)}
\end{align*}
for spaces $T$ and objects $X$ and $Y$ of $\mC$,
constructed as follows.  

\begin{prop}
In the context of Theorem~\ref{thm:enrich}, tensors and cotensors with spaces exist and are
given by $X\mtensor T = X\mtimes LT$ and  $T\pitchfork Y=F(LT,Y)$ for
a space $T$ and objects $X,Y$ in $\mC$.
\end{prop}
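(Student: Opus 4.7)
The plan is to verify the two defining adjunction isomorphisms directly, by chaining the closed structure on $\mC$, the $L\dashv R$ adjunction, the strong monoidality of $L$, and Yoneda in $\Top$.

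The key step is to establish the lemma that for a space $T$ and an object $W$ of $\mC$, there is a natural isomorphism $RF(LT,W)\iso C(T,RW)$. By the Yoneda lemma in $\Top$, it suffices to show that both sides represent the same $\Top$-valued functor of a probing space $S$. The calculation is
\begin{align*}
\Top(S,RF(LT,W))
&\iso \mC(LS,F(LT,W))\\
&\iso \mC(LS\mtimes LT,W)\\
&\iso \mC(L(S\times T),W)\\
&\iso \Top(S\times T,RW)\\
&\iso \Top(S,C(T,RW)),
\end{align*}
using $L\dashv R$, the $\mtimes,F$ adjunction in $\mC$, strong monoidality of $L$ (which gives $L(S\times T)\iso LS\mtimes LT$ since $L$ carries the cartesian unit and product in $\Top$ to $\mS$ and $\mtimes$), $L\dashv R$ again, and the internal hom in $\Top$.

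Given the lemma, the tensor formula is immediate. Using the closed structure on $\mC$ to swap the two factors inside $F$ and then applying the lemma,
\[
RF(X\mtimes LT,Z)\iso RF(LT,F(X,Z))\iso C(T,RF(X,Z))
\]
naturally in $Z$, so $X\mtimes LT$ represents the functor $C(T,RF(X,-))$, giving $X\mtensor T=X\mtimes LT$. The cotensor formula goes the same way: by the closed structure and the lemma,
\[
RF(X,F(LT,Y))\iso RF(X\mtimes LT,Y)\iso C(T,RF(X,Y))
\]
naturally in $X$, so $F(LT,Y)$ represents $C(T,RF(-,Y))$, giving $T\pitchfork Y=F(LT,Y)$.

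The only nontrivial content is really the lemma, and its only subtlety is that one must invoke strong monoidality of $L$ (not merely lax), so that $L(S\times T)\iso LS\mtimes LT$ canonically; this is exactly the doctrinal adjointness that makes the underlying $\Set$-level adjunction $L\dashv R$ automatically $\Top$-enriched when $\Top$ is cartesian, and everything else is formal bookkeeping with the $\mtimes,F$ adjunction.
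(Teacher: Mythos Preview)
Your proof is correct and follows essentially the same route as the paper: both reduce the proposition to the key isomorphism $RF(LT,W)\iso C(T,RW)$ and then read off the tensor and cotensor formulas from it. The only cosmetic difference is that the paper establishes this isomorphism by writing down explicit mutually inverse maps (built from units, counits, and evaluation), whereas you verify it by a Yoneda argument probing with spaces $S$; both arguments use the strong monoidality of $L$ in the same essential way.
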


The proposition is an easy consequence of the formal isomorphism 
\begin{equation}\label{eq:MSComp}
RF(LT,X)\iso C(T,RX),
\end{equation}
natural in spaces $T$ and objects $X$ of $\mC$; the isomorphism in
the forward direction is adjoint to the map 
\[
RF(LT,X)\times T\to RF(LT,X)\times RLT\to R(F(LT,X)\mtimes LT)\to RX
\]
and the isomorphism in the backwards direction is adjoint to the map
$LC(T,RX)\to F(LT,X)$ adjoint to the map
\[
LC(T,RX)\mtimes LT\iso L(C(T,RX)\times T)\to LRX\to X.
\]

Let $RF^{\mC[\oO]}(-,-)$ denote the mapping spaces constructed above
for the category of $\oO$-algebras;
despite the suggestion of the notation, this 
is not typically a composite functor.  For an $\oO$-algebra $A$,
$F(-,A)$ does not typically carry a canonical $\oO$-algebra structure,
but for a space $T$, $F(LT,A)=T\pitchfork A$ does:  the structure map 
\[
\oO(n)\mtimes (T\pitchfork A)^{(n)}\to T\pitchfork A
\]
is adjoint to the map
\[
\oO(n)\mtimes (T\pitchfork A)^{(n)}\mtimes LT =
\oO(n)\mtimes (F(LT,A))^{(n)}\mtimes LT \to A
\]
constructed as the composite
\begin{multline*}
\oO(n)\mtimes (F(LT,A))^{(n)}\mtimes LT \to
\oO(n)\mtimes (F(LT,A))^{(n)}\mtimes (LT)^{(n)} \\\to
\oO(n)\mtimes A^{(n)}\to A
\end{multline*}
using the diagonal map on the space $T$ and the structure map on $A$.
A check of universal properties then shows that $T\pitchfork A$ is the
cotensor of $A$ with $T$ in the category of $\oO$-algebras.  Tensors
in $\mC[\oO]$ can be constructed as reflexive coequalizers
\[
\xymatrix@C-1pc{%
\bO(\bO A\mtensor T)
  \ar@<.5ex>[r]\ar@<-.5ex>[r]
&\bO(A\mtensor T)\ar[r]
&A\mtensor^{\mC[\oO]} T.
}
\]
Writing $\Delta[n]$ for the standard $n$-simplex, we then have the
standard definition of geometric realization of simplicial objects in
$\mC$ and $\mC[\oO]$ (without additional assumptions) and geometric
realization (often called ``Tot'') of cosimplicial objects in $\mC$
and $\mC[\oO]$ when certain limits exist.  Given a simplicial object
$X\subdot$ or a cosimplicial object $Y\supdot$, the degeneracy
subobject $sX_{n}$ of $X_{n}$ is defined as the colimit of the
denegeracy maps and the degeneracy quotient object $sY^{n}$ of $Y^{n}$
is defined as the limit (if it exists) of the degeneracy maps. (In
some literature, $sX_{n}$ is called the ``latching object'' and
$sY^{n}$ the ``matching object''; see
\cite[\S15.2]{Hirschhorn-ModelCategories}.)  The geometric realization
of $X\subdot$ in $\mC$ or $\mC[\oO]$ is then the sequential colimit
of $|X\subdot|_{n}$, where $|X\subdot|_{0}=X_{0}$ and
$|X\subdot|_{n}$ is defined inductively as the pushout
\[
\xymatrix@C-1pc{%
(sX_{n}\mtensor \Delta[n]) \cup_{(sX_{n}\mtensor \partial \Delta[n])} 
  (X_{n}\mtensor \partial \Delta[n])\ar[r]\ar[d]
&X_{n}\mtensor \Delta[n]\ar[d]\\
|X\subdot|_{n-1}\ar[r]&|X\subdot|_{n}
}
\] 
with both the tensor and the pushouts performed in $\mC$ to define the
geometric realization in $\mC$ or performed in $\mC[\oO]$ to define
the geometric realization in $\mC[\oO]$.  The analogous, opposite
construction defines the geometric realization of $Y\supdot$ when all
the limits exist.  Because cotensors and limits (when they exist)
coincide in $\mC$ and $\mC[\oO]$, geometric realization of
cosimplicial objects (when it exists) also coincides in $\mC$ and
$\mC[\oO]$.  Because pushouts generally look very different in $\mC$
than in $\mC[\oO]$, one might expect that geometric realization of
simplicial objects in $\mC$ and in $\mC[\oO]$ would also look very
different; this turns out not to be the case.

\begin{thm}\label{thm:georeal}
Assume $\mC$ satisfies the hypotheses of Theorem~\ref{thm:enrich} for
$\mE$ either the category of spaces or the category of simplicial sets.
\begin{enumerate}
\item Let $A\supdot$ be a cosimplicial object in $\mC[\oO]$. If the
limits defining the geometric realization (``Tot'') exist in $\mC$, then that
geometric realization has the canonical structure of an $\oO$-algebra
and is isomorphic to the geometric realization (``Tot'') in $\mC[\oO]$.
\item Let $A\subdot$ be a simplicial object in $\mC[\oO]$. Then the
geometric realization of $A\subdot$ in $\mC$ has the canonical
structure of an $\oO$-algebra and is isomorphic to the geometric
realization of $A\subdot$ in $\mC$.
\end{enumerate}
\end{thm}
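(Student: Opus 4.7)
For part (i), we appeal to the paragraph just before the statement: the forgetful functor $\mC[\oO]\to\mC$ creates all limits, and the cotensor $T\pitchfork A=F(LT,A)$ carries a canonical $\oO$-algebra structure realizing the correct universal property. Since Tot of a cosimplicial object is assembled by iterated pullbacks along matching maps together with cotensors with the spaces $\Delta[n]$ and $\partial\Delta[n]$, the construction agrees in $\mC$ and $\mC[\oO]$ whenever the required limits exist in $\mC$.

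For part (ii), the plan is to show that the monad $\bO$ preserves geometric realization of simplicial objects: there is a natural isomorphism $\bO|A\subdot|\iso|\bO A\subdot|$ in $\mC$ for any simplicial object $A\subdot$ of $\mC$. Given this, the levelwise $\oO$-algebra action maps of a simplicial $\oO$-algebra assemble to a map of simplicial objects $\bO A\subdot\to A\subdot$ in $\mC$, which realizes to $|\bO A\subdot|\to|A\subdot|$; composing with the inverse of the isomorphism yields a candidate action $\bO|A\subdot|\to|A\subdot|$. The associativity and unit axioms for this action are then checked by applying the isomorphism one level higher (in particular $\bO\bO|A\subdot|\iso|\bO\bO A\subdot|$) and reducing to the corresponding axioms degree by degree. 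For the universal property, a map of $\oO$-algebras $|A\subdot|\to B$ unwinds via adjunction for realization in $\mC$ together with the isomorphism $\bO|A\subdot|\iso|\bO A\subdot|$ to a simplicial map $A\subdot\to B$ whose components are $\oO$-algebra maps; this identifies $|A\subdot|$ equipped with its $\oO$-algebra structure as the realization in $\mC[\oO]$.

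To establish the isomorphism $\bO|A\subdot|\iso|\bO A\subdot|$, decompose $\bO X=\coprod_{m}\oO(m)\mtimes_{\Sigma_{m}}X^{(m)}$. Coproducts, quotients by $\Sigma_{m}$, and the tensor $\oO(m)\mtimes(-)$ all commute with realization since each is a colimit or preserves colimits in the relevant variable. The crux is therefore to show the $m$-fold $\mtimes$-power commutes with realization, i.e., $|A\subdot|^{(m)}\iso|A\subdot^{(m)}|$, where $A\subdot^{(m)}$ denotes the diagonal simplicial object $[n]\mapsto A_{n}^{(m)}$. Using that $\mtimes$ preserves colimits in each variable and that $L$ is strong symmetric monoidal, the power $|A\subdot|^{(m)}$ is identified with the multi-realization in $\mC$ of the multisimplicial object sending $([n_{1}],\dotsc,[n_{m}])$ to $A_{n_{1}}\mtimes\dotsb\mtimes A_{n_{m}}$ tensored with $L\Delta[n_{1}]\mtimes\dotsb\mtimes L\Delta[n_{m}]\iso L(\Delta[n_{1}]\times\dotsb\times\Delta[n_{m}])$; the cartesian diagonals $\Delta[n]\to\Delta[n]^{m}$ in $\mE$ then furnish the comparison map to $|A\subdot^{(m)}|$.

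The main obstacle is verifying that this last comparison is an isomorphism; this is the classical \emph{realization commutes with products} theorem, whose proof uses both the cartesian structure of $\mE$ (for the existence of the diagonal on $\Delta[n]$) and an induction up the skeletal filtration exploiting the cellular decomposition of products of simplices in $\mE$. This step is exactly where the restriction of $\mE$ to spaces or simplicial sets is essential: in a general symmetric monoidal enrichment, $\mtimes$-powers do not commute with geometric realization, and the theorem fails.
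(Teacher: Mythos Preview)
Your strategy is the paper's: establish that geometric realization in $\mC$ is strong symmetric monoidal (so that $\bO|A\subdot|\iso|\bO A\subdot|$), use this to put an $\oO$-algebra structure on $|A\subdot|$, and then identify it with $|A\subdot|^{\mC[\oO]}$.  Two points of comparison are worth making.

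For the key isomorphism $|X\subdot|^{(m)}\iso|X\subdot^{(m)}|$, the paper packages the argument with coends rather than a skeletal induction: writing $|X\subdot|=\int^{n}X_{n}\mtensor\Delta[n]$ and using a Yoneda-style identity to rewrite $|X\subdot\mtimes Y\subdot|$ as a double coend over $\DDelta^{\op}\times\DDelta^{\op}$, one lands on $\int^{(m,n)}(X_{m}\mtimes Y_{n})\mtensor|\Delta[m]\subdot\times\Delta[n]\subdot|$, which reduces in one step to the classical fact that $|\Delta[m]\subdot\times\Delta[n]\subdot|\iso\Delta[m]\times\Delta[n]$ in spaces.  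Your multisimplicial/skeletal sketch reaches the same classical input but is heavier to make precise; the coend argument is both shorter and makes the symmetric-monoidal coherence transparent.

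Your deduction of the universal property has a gap as written.  Maps $|A\subdot|\to B$ in $\mC$ are \emph{not} in bijection with simplicial maps $A\subdot\to B$ into the constant object; realization is adjoint to $[n]\mapsto\Delta[n]\pitchfork B$, not to the constant functor.  The paper sidesteps this by constructing the comparison map $|A\subdot|^{\mC[\oO]}\to|A\subdot|$ directly from the tensors $A_{n}\mtensor^{\mC[\oO]}\Delta[n]\to|A\subdot|$, checking it is an isomorphism when $A\subdot=\bO X\subdot$ (where both sides are $\bO|X\subdot|$), and then reducing the general case to the free case via the reflexive coequalizer $\bO\bO A\subdot\rightrightarrows\bO A\subdot\to A\subdot$ together with the fact that both realizations preserve reflexive coequalizers.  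If you want to argue via universal properties instead, you should compare against $\Delta[\bullet]\pitchfork B$ and use that cotensors agree in $\mC$ and $\mC[\oO]$.
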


As discussed above, only~(ii) requires additional argument. For
clarity in the argument for the theorem, we will write $|{\cdot}|$ for
geometric realization in $\mC$ and $|{\cdot}|^{\mC[\oO]}$ for
geometric realization in $\mC[\oO]$.  The key fact is the following
lemma.

\begin{lem}\label{lem:prod}
For $\mC$ as in the previous theorem, geometric realization in
$\mC$ is strong symmetric monoidal.
\end{lem}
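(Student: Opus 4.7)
The plan is to reduce to the classical fact that geometric realization in $\mE$ itself is strong symmetric monoidal: for simplicial sets this follows formally from the Eilenberg--Zilber theorem, and for (compactly generated) spaces this is the standard computation that uses our ``convenient category'' assumption.

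First, I would identify the inductive definition of $|X\subdot|$ in $\mC$ with the coend
\[
|X\subdot| \iso \int^{[n]\in \DDelta} X_n \mtensor \Delta[n],
\]
by checking that the latching-object pushouts in the text's skeletal construction assemble the usual coend presentation of geometric realization. This identification is formal and uses only that the tensor $(-)\mtensor(-)$ commutes with colimits in each variable (automatic since $\mtensor = (-)\mtimes L(-)$ is a composite of left adjoints and $L$ is a left adjoint).

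Next, for simplicial objects $X\subdot, Y\subdot$ in $\mC$, I would compute the external product. Since $\mtimes$ preserves colimits in each variable (hence preserves coends),
\[
|X\subdot|\mtimes |Y\subdot| \iso \int^{[m],[p]} (X_m \mtensor \Delta[m])\mtimes (Y_p \mtensor \Delta[p]).
\]
Because $L$ is strong symmetric monoidal, $L\Delta[m]\mtimes L\Delta[p]\iso L(\Delta[m]\times \Delta[p])$, which unpacks to a natural isomorphism
\[
(X_m \mtensor \Delta[m])\mtimes (Y_p \mtensor \Delta[p]) \iso (X_m \mtimes Y_p) \mtensor (\Delta[m]\times \Delta[p]).
\]
Substituting gives
\[
|X\subdot|\mtimes |Y\subdot| \iso \int^{[m],[p]} (X_m\mtimes Y_p)\mtensor (\Delta[m]\times \Delta[p]).
\]

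Finally, I would apply the bisimplicial collapse
\[
\int^{[m],[p]} Z_{m,p}\mtensor (\Delta[m]\times \Delta[p]) \iso \int^{[n]} Z_{n,n}\mtensor \Delta[n]
\]
to $Z_{m,p} = X_m\mtimes Y_p$, whose right side is $|X\subdot\mtimes Y\subdot|$. This is where the hypothesis on $\mE$ enters decisively: the statement holds in $\mE$ by Eilenberg--Zilber (simplicial sets) or by the classical realization--product theorem (convenient spaces), and transfers to $\mC$ upon applying $L$ and tensoring. For the unit, the constant simplicial object at $\mS$ has realization $\mS \mtensor |\Delta\subdot|_\mE \iso \mS \mtimes L(|\Delta\subdot|_\mE)$, and since $|\Delta\subdot|_\mE \iso *$ in $\mE$ and $L$ preserves the unit, this is canonically $\mS$.

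The hard part is tracking coherence of the associator, symmetry, and unit isomorphisms through all of these manipulations. I would handle this by bootstrapping off the classical coherence in $\mE$: every isomorphism constructed above arises by applying $L$ to a standard isomorphism in $\mE$ (or tensoring one with objects of $\mC$), and the coherence axioms for $|{-}|: s\mC \to \mC$ then follow from those for $|{-}|: s\mE \to \mE$ together with the strong monoidality of $L$ and the hexagon/pentagon already satisfied by $\mtimes$ in $\mC$.
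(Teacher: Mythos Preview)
Your proof is correct and takes essentially the same approach as the paper: both identify realization as a coend, express $|X\subdot|\mtimes|Y\subdot|$ as a double coend using that $L$ is strong symmetric monoidal, and reduce the comparison with $|X\subdot\mtimes Y\subdot|$ to the classical fact that $|\Delta[m]\subdot\times\Delta[n]\subdot|\iso\Delta[m]\times\Delta[n]$ in $\mE$. The only difference is presentational: the paper unwinds your ``bisimplicial collapse'' step explicitly via a co-Yoneda identification and a Fubini-for-coends manipulation, whereas you package this as a single transfer from $\mE$ to $\mC$.
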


\begin{proof}
Although we wrote a more constructive definition of geometric
realization above, it can also be described as a coend
\[
|X\subdot|=\int^{\DDelta^{\op}}X\subdot\mtensor \Delta[\ssdot],
\]
where $\DDelta$ denotes the category of simplexes (the category with
objects $[n]=\{0,\dotsc,n\}$ for $n=0,1,2,\dotsc$, and maps the
non-decreasing functions) and $\Delta[n]$ denotes the standard
$n$-simplex in spaces or simplicial sets.
Because the symmetric monoidal product $\mtimes$ for $\mC$ is assumed
to commute with colimits in each variable,  we can
identify the product of geometric realizations also as a coend
\begin{equation*}\label{eq:grc}
|X\subdot|\mtimes|Y\subdot| \iso 
  \int^{\DDelta^{\op}\times \DDelta^{\op}}
    (X\subdot\mtimes Y\subdot)\mtensor 
       (\Delta[\ssdot]\times \Delta[\ssdot]).
\end{equation*}
On the other hand,
\[
|X\subdot \mtimes Y\subdot|=
   \int^{\DDelta^{\op}}\diag(X\subdot\mtimes Y\subdot) \mtensor \Delta[\ssdot].
\]
Next, we need a purely formal observation, which is an adjoint form of
the Yoneda lemma: if coproducts of appropriate cardinality exist in
$\aC$, then given a functor 
$F\colon \aC\to \aD$, functoriality of $F$ induces a natural isomorphism 
\[
\int^{c\in \aC}F(c)\times \aC(c,-)\overto{\iso}F(-)
\]
(where $\times$ denotes coproduct over the given set; the coend
exists and the identification holds with no further hypotheses on
$\aC$ or $\aD$).  Applying this to 
\[
F((\bullet,\bullet))=X\subdot \mtimes Y\subdot\colon
\DDelta^{\op}\times \DDelta^{\op}\to \mC
\]
and pre-composing with $\diag$, we get an isomorphism
\[
X_{p}\mtimes Y_{p}\iso \int^{(m,n)\in \DDelta^{\op}\times
\DDelta^{\op}}(X_{m}\mtimes Y_{n})\times (\DDelta^{\op}(m,p)\times \DDelta (n,p))
\]
of functors $p\in \DDelta^{\op}\to \mC$.  Commuting coends, we can reorganize the
double coend
\[
|X\subdot \mtimes Y\subdot|\iso
   \int^{p\in \DDelta^{\op}}\hspace{-1ex}
\bigg(
   \int^{(m,n)\in \DDelta^{\op}\times \DDelta^{\op}} \hspace{-1.5ex}
   (X_{m}\mtimes Y_{n})
   \times (\DDelta^{\op}(m,p)\times \DDelta^{\op}(n,p))
\bigg) 
\mtensor \Delta[p]
\]
as 
\[
\int^{(m,n)\in \DDelta^{\op}\times \DDelta^{\op}}
   (X_{m}\mtimes Y_{n})
\mtensor
\bigg(
   \int^{p\in \DDelta^{\op}}
   (\DDelta^{\op}(m,p)\times \DDelta^{\op}(n,p))\times 
   \Delta[p]
\bigg).
\]
In the latter formula, the expression in parentheses is the coend
formula for the geometric realization (in spaces) of the product of
standard simplices (in simplicial sets) $\Delta[m]\subdot\times
\Delta[n]\subdot$, which is $\Delta[m]\times \Delta[n]$ by the
``classic'' version of the lemma for geometric realization in
spaces. This then constructs the natural isomorphism
$|X\subdot|\mtimes |Y\subdot|\iso |X\subdot\mtimes Y\subdot|$, and a
little more fiddling with coends shows that this natural
transformation is symmetric monoidal.
\end{proof}

As a consequence of the previous lemma, we have a natural isomorphism
$\bO|X\subdot|\iso |\bO X\subdot|$, making the appropriate diagrams
commute so that the geometric realization (in $\mC$) of a simplicial
object $A\subdot$ in $\mC[\oO]$ obtains the natural structure of an
$\oO$-algebra.  Moreover, the canonical maps $A_{n}\mtensor
\Delta[n]\to |A\subdot|$
induce maps of $\oO$-algebras $A_{n}\mtensor^{\mC[\oO]}\Delta[n]\to
|A\subdot|$ that assemble into a natural map of $\oO$-algebras
\[
|A\subdot|^{\mC[\oO]}\to |A\subdot|.
\]
In the case when $A\subdot=\bO X\subdot$, under the identification of
colimits $|\bO X\subdot|^{\mC[\oO]}=\bO|X\subdot|$, this map is the isomorphism 
$\bO|X\subdot|\to |\bO X\subdot|$
above.  To see that it is an isomorphism for arbitrary $A\subdot$,
write $A\subdot$ as a (reflexive) coequalizer
\[
\xymatrix@C-1pc{%
\bO \bO A\subdot\ar@<.5ex>[r]\ar@<-.5ex>[r]&\bO A\subdot\ar[r]&A\subdot,
}
\]
apply the functors, and compare diagrams.

\section{Model Structures for Operadic Algebras}
\label{sec:model}

The purpose of this section is to review the construction of model
structures on some of the categories of operadic algebras that are of interest in
homotopy theory; we use these in the next section in comparison theorems giving Quillen
equivalences between some of these categories. 
Constructing model structures for algebras over operads is a special
case of constructing model structures for algebras over monads;
chapter~VII of EKMM~\cite{EKMM} seems to be an early reference for
this kind of result, but it concentrates on the category of
LMS-spectra and related categories.
Schwede-Shipley~\cite{SS-AlgMod} studies the
general case of monads in cofibrantly generated monoidal model
categories, which Spitzweck~\cite{Spitzweck-Operads} specializes to the
case of operads.  Because less sharp results hold in the general case
than in the special cases of interest, we state the results on model 
structures as a list of examples.  This is an ``example theorem'' both
in the sense that it gives a list of examples, but also in the sense
that it fits into the 
general rubric of the kind of theorem that should hold very generally under
appropriate technical hypotheses with essentially the same proof outline.
Some terminology and notation is
explained after the statement.

\begin{ethm}\label{ethm:model}
Let $\mC$ be a symmetric monoidal category with a cofibrantly
generated model structure 
and let $\oO$ be an operad in $\mC$
from one of the examples listed below.  Then the category of
$\oO$-algebras in $\mC$ is a closed model category with:
\begin{enumerate}
\item Weak equivalences the underlying weak equivalences in $\mC$
\item Fibrations the underlying fibrations in $\mC$
\item Cofibrations the retracts of regular $\bO I$-cofibrations
\end{enumerate}
This theorem holds in particular in the examples:
\begin{enumerate}\deflist
\item $\mC$ is the category of symmetric spectra (of spaces or
simplicial sets) with its positive stable model structure or
orthogonal spectra with its positive stable model structure or the
category of EKMM $S$-modules with its standard model structure (with
$\mtimes$ the smash product, $\mS$ the sphere spectrum) and $\oO$ is
any operad in $\mC$. \cite[8.1]{ChadwickMandell}
\item $\mC$ is the category of spaces or simplicial sets (with
$\mtimes=\times$, $\mS=*$), or simplicial $R$-modules for some
simplicial commutative ring $R$ (with $\mtimes=\otimes_{R}$, $\mS=R$) and $\oO$
is any operad. 
\item $\mC$ is the category of (unbounded) chain
complexes in $R$-modules for a commutative ring $R$ (with $\mtimes
=\otimes_{R}$, $\mS=R$) and either $R\supset \bQ$ or $\oO$ admits a
map of operads $\oO\to \oO\otimes \oE$ which is a section for the map
$\oO\otimes \oE\to \oO\otimes \oCom\iso \oO$, where $\oE$ is any
$E_{\infty}$ operad that naturally acts on the normalized cohains of
simplicial sets. \cite[3.1.3]{BergerFresse-Combinatorial}
\item $\mC$ is a monoidal model category in the sense of
\cite[3.1]{SS-AlgMod} that satisfies the Monoid
Axiom of \cite[3.3]{SS-AlgMod} and $\oO$ is a
cofibrant operad in the sense of
\cite[\S3]{Spitzweck-Operads}. \cite[\S4, Theorem~4]{Spitzweck-Operads}
\end{enumerate}
\end{ethm}

The category of EKMM $\bL$-spectra \cite[I\S4]{EKMM} also fits into
example~(a) if we allow $\mC$ to be a ``weak'' symmetric monoidal
category in the sense of \cite[II.7.1]{EKMM}; the theorem then covers
categories of operadic algebras in LMS spectra for operads over the
linear isometries operad that have the form $\oO\times \oL\to \oL$;
see \cite[3.5]{ChadwickMandell}.

In part~(c), we note that for an operad that satisfies the section
condition (or when $R\supset \bQ$),
the functor $\oO(n)\times_{R[\Sigma_{n}]}(-)$ preserves
preserve exactness of (homologically)
bounded-below exact sequences of $R$-free $R[\Sigma_{n}]$-modules (for all $n$).
For operads that satisfy this more general condition but not
necessarily the section condition, the algebra category still has a
theory of cofibrant objects and a good homotopy theory for those
objects; see, for example, \cite[\S2]{Einfty}. 

It is beyond the scope of the present chapter to do a full review of
closed model category theory terminology, but we recall that a
``cofibrantly generated model category'' has a set $I$ of ``generating
cofibrations'' and a set $J$ of ``generating acyclic cofibrations''
for which the Quillen small object argument can be done (perhaps
transfinitely, but in the examples of (a), (b), and (c), sequences
suffice).  Then
\[
\bO I = \{ \bO f \mid f\in I\}
\] 
is the set of maps of $\oO$-algebras obtained by applying $\bO$ to
each of the maps in $I$.  The point of $\bO I$ is that a map of
$\oO$-algebras has the left lifting property with respect to $\bO I$
in $\oO$-algebras exactly when the underlying map in $\mC$ has the
left lifting property with respect to $I$.  The same definition and observations
apply replacing $I$ with $J$.  The strategy for proving the previous
theorem is to define the fibrations and weak equivalences of
$\oO$-algebras as in (i),(ii), and define cofibrations in terms of the
left lifting property (obtaining the characterization in (iii) as a
theorem).  The advantage of this approach is that fibrations and
acyclic fibrations are also characterized by lifting properties: a map
of $\oO$-algebras is a fibration 
if and only if it has the right lifting property with respect to $\bO
J$ and a map of $\oO$-algebras is an acyclic fibration if and only if
it has the right lifting property with respect to $\bO I$.  For
these lifting properties, we can attempt the small object argument.  We
now outline the remaining steps in this approach.

Recall that a \term{regular $\bO I$-cofibration} is a map formed as a
(transfinite) composite of pushouts along coproducts of maps in $\bO
I$. This is the generalization of the notion of a \term{relative
$\bO I$-cell complex} which is the colimit of a sequence of pushouts
of coproducts of maps in $\bO I$; in the case of examples~(a), (b),
and (c), in a regular $\bO I$-cofibration the transfinite composite
can always be replaced simply by a sequential composite and so a
regular $\bO I$-cofibration is a relative $\bO I$-cell complex.   The
small object argument for $I$ and $J$ in $\mC$ implies the small
object argument for $\bO I$ and $\bO J$, which gives factorization in
$\oO$-algebras of a map as either a regular $\bO I$-cofibration
followed by an acyclic fibration or a regular $\bO J$-cofibration
followed by a fibration.  (A small wrinkle comes up 
in going from the small object argument in $\mC$ to the small object
argument in $\mC[\oO]$ in the topological examples of (a) and (b): we
need to check that a regular $\bO I$-cofibrations are nice maps, for
example, closed inclusions on the constituent spaces; see the
``Cofibration Hypothesis'' of \cite[VII\S4]{EKMM} or
\cite[5.3]{MMSS}.)

This gets us most of the way to a model structure.  Having defined a
cofibration of $\oO$-algebras as a map that has the left lifting
property with respect to the acyclic fibrations, the free-forgetful
adjunction shows that regular $\bO I$-cofibrations are cofibrations;
moreover, it follows formally that any cofibration is the retract of a
regular $\bO I$-cofibration:
given a cofibration $f\colon A\to B$, factor it as
$p\circ i$ for $i\colon A\to B'$ a regular $\bO I$-cofibration and
$p\colon B'\to B$ an acyclic
fibration, then solving the lifting problem
\[
\xymatrix@-1pc{%
A\ar[r]^{i}\ar[d]_{f}&B'\ar[d]^{p}\\
B\ar@{..>}[ur]_{g}\ar[r]_{\id}&B
}
\]
to produce a map $g\colon B\to B'$ exhibits $f$ as a retract of $i$.
\[
\xymatrix@-1pc{%
A\ar[r]^{\id}\ar[d]_{f}&A\ar[r]^{\id}\ar[d]_{i}&A\ar[d]_{f}\\
B\ar[r]_{g}&B'\ar[r]_{p}&B
}
\]
We can try the same thing with regular $\bO J$-cofibrations; they have
the left lifting property with respect to all fibrations so are in
particular cofibrations, but are they weak equivalences?  This is the
big question and what keeps us from having a fully general result for
Theorem~\ref{ethm:model} (especially in (c)).  If regular $\bO
J$-cofibrations are weak equivalences, 
then the trick in the previous argument shows that every acyclic
cofibration is a retract of a regular $\bO J$-cofibration, and the
lifting property for acyclic cofibrations follows as does the other
factorization, proving the model structure.  (Conversely, if the model
structure exists, because regular $\bO J$-cofibrations have the left
lifting property for all fibrations, it follows that they are weak
equivalences.)  

In many examples, including examples (a) and (b) in the theorem above,
the homogeneous filtration on the pushout that we studied in
Section~\ref{sec:limit} can be used to prove that regular $\bO
J$-cofibrations are weak equivalences. Specifically, for $X\to Y$ a
map in $J$, taking $A'=A\amalg^{\mC[\oO]}_{\bO X}\bO Y$, the case
$m=0$ of the filtration on the enveloping operad for $A$ gives a
filtration on $A'$ by objects of $\mC$ starting from $A$.  Now from
the inductive definition of $Q^{\kay}_{\kay-1}(X\sto Y)$
in~\eqref{eq:Ql}, it can be checked in examples (a) and (b) that the
map $Q^{\kay}_{\kay-1}(X\sto Y)\to Y^{(\kay)}$ is an equivariant
Hurewicz cofibration of the underlying spaces or a monomorphism of the
underlying simplicial sets as well as being a weak equivalence.  The
pushout~\eqref{eq:poenv} then identifies the maps in the filtration of
$A'$ as weak equivalences as well.  (This approach can also be used to
prove versions of the ``Cofibration Hypothesis'' of \cite[VII\S4]{EKMM} or
\cite[5.3]{MMSS} that regular $\bO
I$-cofibrations are closed inclusions on the constituent spaces.)

Example~(d) is similar, except that it uses a filtration argument on
the construction of a cofibrant operad; see \cite[\S4]{Spitzweck-Operads}.  

Example~(c) fits into the case of the general
theorem of Schwede-Shipley \cite[2.3]{SS-AlgMod}, where every object
is fibrant and has a path object.  To complete the argument here, we
need to show that every map $f\colon A\to B$ factors as a weak
equivalence followed by a fibration:
\[
\xymatrix@-.75pc{%
A\ar[r]^-{\simeq}&A'\ar@{->>}[r]&B.
}
\]
We then get the factorization of an acyclic cofibration followed by a
fibration by using the factorization already established:
\[
\xymatrix@-.75pc{%
A\ar@{{ >}{-}{>}}[r]^-{\simeq}&A''\ar@{->>}[r]^-{\simeq}&A'\ar@{->>}[r]&B.
}
\]
In the case of~(c) where we
hypothesize a map of operads $\oO\to \oO\otimes \oE$, this map gives a
natural $\oO$-algebra structure on $B\otimes C^{*}(-)$; the hypothesis
that the composite map on $\oO$ is the identity implies that the
canonical isomorphism
\[
B\iso B\otimes C^{*}(\Delta[0])
\]
is an $\oO$-algebra map.  Looking at the maps between $\Delta[0]$ and
$\Delta[1]$, we get maps of $\oO$-algebras
\[
B\to B\otimes C^{*}(\Delta[1])\to B\times B
\]
and the usual mapping path object construction
\[
\xymatrix@-.75pc{%
A\ar[r]^-{\simeq}&A\times_{B}(B\otimes C^{*}(\Delta[1]))\ar@{->>}[r]&B
}
\]
consists of maps of $\oO$-algebras and 
gives the factorization.  In the case when $R\supset \bQ$, the
polynomial de Rham functor $A^{*}$ reviewed in
Section~\ref{sec:cochains} is a functor from simplicial sets to
commutative differential graded $\bQ$-algebras, which can be used in
the same way to construct a factorization
\[
\xymatrix@-.75pc{%
A\ar[r]^-{\simeq}&A\times_{B}(B\otimes_{\bQ} A^{*}(\Delta[1]))\ar@{->>}[r]&B.
}
\]

In the case of operadic algebras in spaces in example~(b) and EKMM
$S$-modules in example~(a), we have another argument taking advantage
of the topological enrichment.  In these examples, the maps in $J$ are
deformation retractions, and so the maps in $\bO J$ are deformation
retractions in the category of $\oO$-algebras.  It follows that
regular $\bO J$-cofibrations are also deformation retractions and in
particular homotopy equivalences.  Since homotopy equivalences are
weak equivalences, regular $\bO J$-cofibrations are weak equivalences
in examples where this argument can be made.  The specific examples
again also fit into the case of \cite[2.3]{SS-AlgMod} where every
object is fibrant and has a path object.

\section{Comparison and Rectification Theorems for Operadic Algebras}
\label{sec:compare}

This section discusses Quillen equivalences and Quillen adjunctions
between the model categories in Example Theorem~\ref{ethm:model}.
In particular, when we change from simplicial sets to spaces or when
we change the underlying symmetric monoidal category between the 
Quillen equivalent modern categories of spectra, we get Quillen
equivalences of categories of operadic algebras under only mild
technical hypotheses on the operad; this gives several comparison
theorems.  We also consider Quillen adjunctions and Quillen
equivalences obtained by change of operads.
In wide generality, the augmentation map $\oA\to \oAss$ for an $A_{\infty}$
operad induces a
Quillen equivalence between categories of algebras.  Likewise, in the case of modern categories of spectra, the
augmentation map $\oE\to \oCom$ for an $E_{\infty}$ operad induces a
Quillen equivalence between categories of algebras.  These comparison
theorems are rectification theorems in that they show that a
homotopical algebraic structure can be replaced up to weak equivalence
with a strict algebraic structure.  

We begin by reviewing the change of operad adjunction.  Let $f\colon
\oA\to \oB$ be a map of operads in a symmetric monoidal category
$\mC$.  Such a map certainly gives a restriction functor $U_{f}$ from
$\oB$-algebras to $\oA$-algebras, and under mild hypothesis, this
functor has a left adjoint.  As in the discussion of colimits in
Section~\ref{sec:limit}, if we assume that $\mC$ satisfies the
hypotheses of Proposition~\ref{prop:monad} then we can define
$P_{f}\colon \mC[\oA]\to \mC[\oB]$ by the reflexive coequalizer
\[
\xymatrix@C-1pc{%
\bB(\bA A)\ar@<.5ex>[r]\ar@<-.5ex>[r]&\bB A\to P_{f}(A)
}
\]
where $\bA$ and $\bB$ denote the monads associated to $\oA$ and $\oB$,
one arrow is induced by the $\oA$-algebra structure on $A$, and the
other arrow is the composite $\bB\bA\to \bB\bB\to \bB$ induced by the
map of operads $f$ and the monadic product on $\bB$.  As a side
remark, not related to the rest of this section, we note that in this
situation the category $\oB$-algebras can be identified as the
category of algebras for the monad $U_{f}P_{f}$ in $\mC[\oA]$ (for a general
formal proof, see~\cite[II.6.6.1]{EKMM}). 

Now suppose that $\mC$ has a closed model structure and $\mC[\oA]$ and
$\mC[\oB]$ are closed model categories with fibrations and weak
equivalences created in $\mC$.  For a map of operads $f\colon \oA\to
\oB$, we then get a Quillen adjunction
\[
P_{f}\colon \mC[\oA]\tofrom \mC[\oB]\noloc U_{f}.
\]
When can we expect it to be a Quillen equivalence?  It is tempting to
define an equivalence of operads in $\mC$ to be a map $f$ such that
derived adjunction induces an equivalence of homotopy categories; then
we have a tautological result that an equivalence of operads induces a
Quillen equivalence of model structures.  Instead we propose the
following definition, which leads to a theorem with some substance
(Example Theorem~\ref{ethm:rect}).  It is the condition used in
practice in proving comparison and rectification theorems.

\begin{defn}\label{def:dmeq}
Let $\mC$ be a closed model category with countable coproducts and
with a symmetric monoidal product that preserves countable colimits in
each variable.  We say that a map $f\colon \oA\to \oB$ of operads in
$\mC$ is a \term{derived monad equivalence} if the induced map $\bA
Z\to \bB Z$ is a weak equivalence for every cofibrant object $Z$
in~$\mC$.
\end{defn}

Though we have not put enough hypotheses on $\mC$ to ensure it, in
practice countable coproducts of reasonable objects in $\mC$ will
preserve and reflect weak equivalences and then $f$ will be a derived
monad equivalence if and only if each of the maps
\[
\oA(m)\mtimes_{\Sigma_{m}}Z^{(m)}\to \oB(m)\mtimes_{\Sigma_{m}}Z^{(m)}
\]
is a weak equivalence. In our examples of main interest, we have more
intrinsic sufficient conditions for a map of operads to be a derived
monad equivalence.

\begin{example}\label{ex:dmeq1}
In the category of spaces (or more generally, any topological or
simplicial model category), a map of operads $f\colon \oA\to \oB$ that
induces an equivariant homotopy equivalence $\oA(m)\to \oB(m)$ for all
$m$ is a derived monad equivalence.  Indeed, the map $\bA Z\to \bB Z$
is a homotopy equivalence for all $Z$ (and a homotopy equivalence in a
topological or simplicial model category is a weak equivalence).  As a
special case, when $\nsA$ is a non-symmetric operad with $\nsA(m)$
contractible for all $m$, the map of operads $\oA\to \oAss$ is a
derived monad equivalence.
\end{example}

\begin{example}\label{ex:dmeq2}
In the category of symmetric spectra (of spaces or simplicial sets)
with its positive stable model structure or the category of orthogonal
spectra with its positive model structure, a map of operads $f\colon
\oA\to \oB$ that induces a (non-equivariant) weak equivalence
$\oA(n)\to \oB(n)$ is a derived monad equivalence.  This can be proved
by generalizing the argument of~\cite[15.5]{MMSS}
(see~\cite[8.3.(i)]{ChadwickMandell} for slightly more details).  In
the case of EKMM $S$-modules, if $f\colon \oA\to \oB$ is a map of
operads of spaces that is a (non-equivariant) homotopy
equivalence $\oA(n)\to \oB(n)$ for all $n$, then $\Sigma^{\infty}_{+}f$ is a
derived monad equivalence.  This can be proved by generalizing the
argument of~\cite[III.5.1]{EKMM}.
(See~\cite[8.3.(ii)]{ChadwickMandell} for a more general statement.)
In particular, in these categories, the augmentation map $\oE\to
\oCom$ for an $E_{\infty}$ operad (assumed to come from spaces in the
EKMM $S$-module case) is a derived monad equivalence.
\end{example}

\begin{example}\label{ex:dmeq3}
In the context of chain complexes of $R$-modules, a map of operads
$\oA\to \oB$ that is an $R[\Sigma_{n}]$-module chain homotopy
equivalence $\oA(n)\to \oB(n)$ for all $n$ is a derived monad
equivalence.  If the functors
$\oA(n)\otimes_{R[\Sigma_{n}]}(-)$ and
$\oB(n)\otimes_{R[\Sigma_{n}]}(-)$ preserve exactness of (homologically)
bounded-below exact sequences of $R$-free
$R[\Sigma_{n}]$-modules (for all $n$), then a weak equivalence $\oA\to
\oB$ is a derived monad equivalence.  This occurs in particular for
part~(c) of Example Theorem~\ref{ethm:model} when $\oA$ and $\oB$ both
satisfy the stated operad hypotheses.
\end{example}

To go with these examples, we have the following example theorem.

\begin{ethm}\label{ethm:rect}
Let $\mC$ be a symmetric monoidal category and $f\colon \oA\to \oB$
a map of operads in $\mC$, where $\mC$, $\oA$, and $\oB$ fall into one
of the examples of Example Theorem~\ref{ethm:model}.(a)-(c).  If $f$
is a derived monad equivalence then the Quillen adjunction $P_{f}\colon
\mC[\oA]\tofrom \mC[\oB]\noloc U_{f}$  is a Quillen equivalence.
\end{ethm}

Again, as in the previous section, this is an ``example theorem'' in
that it gives an example of the kind of theorem that holds much more
generally with a proof that can also be adapted to work much more
generally.  We outline the proof after the change of categories
theorem below, as the arguments for both are quite similar.

In terms of change of categories, one should expect comparison
theorems of the following form to hold quite generally:
\begin{quotation}
Let 
\[
L\colon \mC\tofrom \mC'\noloc R
\]
be a Quillen equivalence between
monoidal model categories with $L$ strong symmetric monoidal, and let
$\oO$ be an operad in $\mC$.  With some technical hypotheses, the
adjunction 
\[
L\colon \mC[\oO]\tofrom \mC'[L\oO]\noloc R
\]
on 
operadic algebra categories is also a Quillen
equivalence
\end{quotation}
A minimal technical hypothesis is that $L\oO$ be ``the right
thing'' and an easy way to ensure this is to put some kind of
cofibrancy condition on the objects $\oO(n)$.  In our cases of
interest, we could certainly state such a theorem, but it would not
cover the example in modern categories of spectra when $\oO$ is the
suspension spectrum 
functor applied to an operad of spaces; for such an operad, the
spectra $\oO(n)$ will not be cofibrant.  On the other hand, in these
examples the right adjoint preserves all weak equivalences and not
just weak equivalences between fibrant objects; in this setup it
seems more convenient to consider an operad $\oO'$ in $\mC'$ and a map of
operads $\oO\to R\oO'$ (or equivalently, $L\oO\to \oO'$) that induces
a weak equivalence 
\[
\bO Z\to R(\bO' LZ)
\]
for all cofibrant objects $Z$ of $\mC$.  We state such a theorem for 
our examples of interest.

\begin{ethm}\label{ethm:compare}
Let $L\colon \mC\tofrom \mC'\noloc R$ be one of the Quillen
adjunctions of symmetric monoidal categories listed below.  Let $\oA$
be an operad in $\mC$, let $\oB$ be an operad in $\mC'$, and let
$f\colon \oA\to R\oB$ be a map of operads that induces a weak
equivalence 
\[
\bA Z\to R(\bB LZ)
\]
for all cofibrant objects $Z$ of $\mC$.  Then the induced Quillen
adjunction 
\[
P_{L,f}\colon \mC[\oA]\tofrom \mC'[\oB]\noloc U_{R,f}
\]
is a Quillen equivalence.
This theorem holds in particular in the examples:
\begin{enumerate}\deflist
\item $\mC$ is the category of simplicial sets (with the usual
model structure) or the category of
symmetric spectra of simplicial sets, $\mC'$ is the category of spaces
or the category of symmetric spectra in spaces (resp.), and $L,R$ is
the geometric realization, singular simplicial set adjunction.
\item $\mC$ is the category of symmetric spectra, $\mC'$ is the
category of orthogonal spectra and $L,R$ is the prolongation,
restriction adjunction of~\cite[p.~442]{MMSS}.
\item $\mC$ is the category of symmetric spectra or orthogonal
spectra, $\mC'$ is the category of EKMM $S$-modules, and $L,R$ is the
adjunction of \cite{Schwede-EKMM} or \cite[I.1.1]{MMM}.
\end{enumerate}
\end{ethm}

As indicated in the paragraph above the statement, the statement takes
advantage of the fact that in the examples being considered in this
section, the right adjoint preserves
all weak equivalences; a general statement for other examples should
use a fibrant replacement for $\bB LZ$ in place of $\bB LZ$.  The
proof sketch below also takes advantage of this property of the right
adjoint.  In generalizing the argument to the case when
fibrant replacement is required in the statement, the fibrant
replacement of the filtration can be performed in $\mC'$.

The proof of the theorems above uses the homogeneous filtration on a
pushout of the form $A'=A\amalg^{\mC[\oO]}_{\bO X}\bO Y$ studied in
Section~\ref{sec:limit}.  This is the $m=0$ case of the filtration on
the enveloping operad $\oU^{\oO}_{A'}$, and we will need to use the
filtration on the whole operad for an inductive argument even though
we are only interested in the $m=0$ case in the end.  We will use
uniform notation in the sketch proof that follows, taking $\mC'=\mC$
with adjoint functors $L$ and $R$ to be the identity in the case of
Example Theorem~\ref{ethm:rect}.  We use the notation $I$ for the
preferred set of generators for the cofibrations of $\mC$ (as in
Section~\ref{sec:model}).

Because fibrations and weak equivalences in the algebra categories are
created in the underlying symmetric monoidal categories, the
adjunction $P_{L,f}$,$U_{R,f}$ is automatically a Quillen adjunction
(as indicated already in the statements), and we just have to prove
that the unit of the adjunction
\begin{equation}\label{eq:uniteq}
A\to U_{R,f}(P_{L,f}A)
\end{equation}
is a weak equivalence for any cofibrant $\oA$-algebra $A$.  Every
cofibrant $\oA$-algebra is the retract of an $\bA I$-cell
$\oA$-algebra, and so it suffices
to consider the case when $A$ is an $\bA I$-cell $\oA$-algebra; then
$A=\colim A_{n}$ where $A_{0}=\oA(0)$ and each $A_{n+1}$ is formed
from $A_{n}$ by cell attachment (of possibly an infinite coproduct of
cells).  As mentioned parenthetically in Section~\ref{sec:model} and
as we shall see below, the underlying maps $A_{n}\to A_{n+1}$ are nice
enough that $A$ is the homotopy colimit (in $\mC$ or $\mC[\oA]$) of
the system of the finite stages $A_{n}$ (and likewise for $P_{L,f}A$,
which is a cell $\bB LI$-algebra with stages $P_{L,f}A_{n}$).  Thus,
it will be enough to see that~\eqref{eq:uniteq} is a weak equivalence
for each $A_{n}$.  By the hypothesis of the theorem, we know that this
holds for $A_{0}$ (which is the free $\oA$-algebra on the initial
object of $\mC$); moreover, as the enveloping operad of $A_{0}$ is
$\oA$ and the enveloping operad of $P_{L,f}A_{0}$ is $\oB$, we can
assume as an inductive hypothesis that
\[
\bU^{\oA}_{A_{n}}Z\to \bU^{\oB}_{P_{L,f}A_{n}}LZ
\]
is a weak equivalence for all cofibrant $Z$; in other words, we can
assume by induction that the
hypothesis of the theorem holds for the map of enveloping operads
$\oU^{\oA}_{A_{n}}\to R(\oU^{\oB}_{P_{L,f}A_{n}}$).  It then suffices
to prove that the hypothesis of the theorem holds for the map of
enveloping operads $\oU^{\oA}_{A_{n+1}}\to
R(\oU^{\oB}_{P_{L,f}A_{n+1}})$; this is because in the categories
$\mC$ and $\mC'$ of the examples, countable coproducts preserve and
reflect weak equivalences and the unit map $A_{n+1}\to
U_{R,f}(P_{L,f}A_{n+1})$ is the restriction of the map of monads to
the homogeneous degree zero summand (at least in the homotopy category
of $\mC$).

To prove this, let $X\to Y$ be the coproduct of maps in $I$ such that
$A_{n+1}=A_{n}\amalg^{\mC[\oA]}_{\bA X}\bA Y$ and consider the
filtration on $\oU^{\oA}_{A_{n+1}}(m)$ and
$\oU^{\oB}_{P_{L,f}A_{n+1}}(m)$ studied in Section~\ref{sec:limit}.
We note that the induction hypothesis on $A_{n}$ also implies that the
map
\begin{multline*}
\oU^{\oA}_{A_{n}}(m)
\mtimes_{\Sigma_{m_{1}}\times \dotsb \times \Sigma_{m_{i}}}
(Z_{1}^{(m_{1})}\mtimes \dotsb \mtimes Z^{(m_{i})}_{i})
\\\to 
R(\oU^{\oB}_{P_{L,f}A_{n}}(m)
\mtimes_{\Sigma_{m_{1}}\times \dotsb \times \Sigma_{m_{i}}}
(LZ_{1}^{(m_{1})}\mtimes \dotsb \mtimes LZ^{(m_{i})}_{i}))
\end{multline*}
is a weak equivalence for all cofibrant objects $Z_{1},\dotsc,Z_{i}$
(where $m=m_{1}+\dotsb +m_{i}$) as this is a summand of the map
\[
\oU^{\oA}_{A_{n}}(m)
\mtimes_{\Sigma_{m}}
(Z_{1}\amalg \dotsb \amalg  Z_{i})^{(m)}
\to 
R(\oU^{\oB}_{P_{L,f}A_{n}}(m)
\mtimes_{\Sigma_{m}}
L(Z_{1}\amalg \dotsb \amalg  Z_{i})^{(m)}).
\]
Looking at the pushout square~\eqref{eq:Ql} that inductively defines
$Q^{\kay}_{i}(X\sto Y)$, a bit of analysis shows that in our example
categories the maps $Q^{\kay}_{i-1}\to Q^{\kay}_{i}$ are
$\Sigma_{\kay}$-equivariant Hurewicz
cofibrations (or in the simplicial categories, maps that geometrically
realize to such).  It follows that for any cofibrant object $Z$, the
maps
\begin{multline*}
\oU^{\oA}_{A_{n}}(\kay+m)
\mtimes_{\Sigma_{\kay}\times \Sigma_{m}}
(Q^{\kay}_{i-1}(X\sto Y) \mtimes Z^{(m)})\\\to
\oU^{\oA}_{A_{n}}(\kay+m)
\mtimes_{\Sigma_{\kay}\times \Sigma_{m}}
( Q^{\kay}_{i}(X\sto Y) \mtimes Z^{(m)})
\end{multline*}
are (or geometrically realize to) Hurewicz cofibrations (likewise in
$\mC'$) and that the maps
\begin{multline*}
\oU^{\oA}_{A_{n}}(\kay+m)
\mtimes_{\Sigma_{\kay}\times \Sigma_{m}}
(Q^{\kay}_{i}(X\sto Y)\mtimes Z^{(m)})\\\to
R(\oU^{\oB}_{P_{L,f}A_{n}}(\kay+m)
\mtimes_{\Sigma_{\kay}\times \Sigma_{m}}
(Q^{\kay}_{i}(LX\sto LY)\mtimes LZ^{(m)}))
\end{multline*}
are weak equivalences.  Now the pushout square~\eqref{eq:poenv} shows
that for any cofibrant object $Z$, at each filtration level $\kay$,
the map
\[
F^{\kay-1}\oU^{\oA}_{A_{n+1}}(m)\mtimes_{\Sigma_{m}}Z^{(m)}\to 
F^{\kay}\oU^{\oA}_{A_{n+1}}(m)\mtimes_{\Sigma_{m}}Z^{(m)}
\]
is (or geometrically realizes to) a Hurewicz cofibration (likewise
in $\mC'$) and that the maps
\[
F^{\kay}\oU^{\oA}_{A_{n+1}}(m)\mtimes_{\Sigma_{m}}Z^{(m)}
\to
R(F^{\kay}\oU^{\oB}_{P_{L,f}A_{n+1}}(m)\mtimes_{\Sigma_{m}}LZ^{(m)})
\]
are weak equivalences.  The colimit is then weakly equivalent to the
homotopy colimit and we get a weak equivalence 
\[
\oU^{\oA}_{A_{n+1}}(m)\mtimes_{\Sigma_{m}}Z^{(m)}
\to
R(\oU^{\oB}_{P_{L,f}A_{n+1}}(m)\mtimes_{\Sigma_{m}}LZ^{(m)}),
\]
completing the induction and the sketch proof of Example
Theorems~\ref{ethm:rect} and~\ref{ethm:compare}.

The argument above proved the comparison theorems by proving equivalences of
enveloping operads.  Since the unary part of the enveloping operad is
the enveloping algebra, we also get module category comparison
results.  We state this as the following corollary, which says that as
long as the algebras are cofibrant, changing categories by Quillen
equivalences and the algebras by derived monad equivalences results in
Quillen equivalent categories of modules.

\begin{cor}\label{cor:modcomp}
Let $L\colon \mC\tofrom \mC'\noloc R$ be one of the Quillen
adjunctions of symmetric monoidal categories in Example
Theorem~\ref{ethm:compare} or the identity functor adjunction on one
of the categories in Example Theorem~\ref{ethm:rect}.  Let $f\colon
\oA\to R\oB$ be a map of operads that induces a weak equivalence $\bA
Z\to R(\bB LZ)$ for all cofibrant objects $Z$, and let $g\colon A\to
RB$ be a weak equivalence of $\oA$-algebras for an $\oA$-algebra $A$
and a $\oB$-algebra $B$.  If $A$ and $B$ are cofibrant (in $\mC[\oA]$ and
$\mC'[\oB]$, respectively), then $f$ and $g$ induce a Quillen equivalence of
the category of $(\oA,A)$-modules and the category of
$(\oB,B)$-modules.
\end{cor}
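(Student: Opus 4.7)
The strategy is to reduce the corollary to a comparison of categories of modules over monoid objects, where the homotopy theory is well-understood. By Propositions~\ref{prop:modulecat} and~\ref{prop:modulecat2}, the category of $(\oA,A)$-modules (resp.\ $(\oB,B)$-modules) is isomorphic to the category of left modules over the enveloping monoid $U^{\oA}A$ in $\mC$ (resp.\ $U^{\oB}B$ in $\mC'$), equipped with the model structure of Theorem~\ref{thm:modone}. The pair $(f,g)$ induces a canonical map of monoid objects $L(U^{\oA}A)\to U^{\oB}B$ in $\mC'$ using the strong symmetric monoidality of $L$, and the module-category adjunction in the statement is extension/restriction of scalars along this map. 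By the final theorem of Section~\ref{sec:modules}, it therefore suffices to show that this map of monoid objects is a weak equivalence.

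I would factor the comparison through $U^{\oB}(P_{L,f}A)$. The first factor $L(U^{\oA}A)\to U^{\oB}(P_{L,f}A)$ is the $m=1$, $Z=\mS$ case (under the $(L,R)$ adjunction) of the comparison of enveloping operads
\[
\oU^{\oA}_{A}(m)\mtimes_{\Sigma_{m}}Z^{(m)}\to R\bigl(\oU^{\oB}_{P_{L,f}A}(m)\mtimes_{\Sigma_{m}}LZ^{(m)}\bigr),
\]
which was proved to be a weak equivalence for cofibrant $A$ and cofibrant $Z$ as the inductive step of the proof sketch of Example Theorems~\ref{ethm:rect} and~\ref{ethm:compare}. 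The second factor $U^{\oB}(P_{L,f}A)\to U^{\oB}B$ comes from the map $P_{L,f}A\to B$ of cofibrant $\oB$-algebras, which is a weak equivalence because it is adjoint to the weak equivalence $g\colon A\to RB$ under the Quillen equivalence provided by Example Theorem~\ref{ethm:compare} or~\ref{ethm:rect}.

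The main obstacle is establishing that the second factor $U^{\oB}(P_{L,f}A)\to U^{\oB}B$ is itself a weak equivalence, that is, that the enveloping algebra functor $U^{\oB}(-)$ preserves weak equivalences between cofibrant $\oB$-algebras. I would handle this by repeating the filtration technique used in Section~\ref{sec:limit} and in the proof of Example Theorems~\ref{ethm:rect} and~\ref{ethm:compare}: factor $P_{L,f}A\to B$ as an acyclic cofibration of cell $\oB$-algebras followed by an acyclic fibration, and track the enveloping algebra through the cell attachments using the pushout~\eqref{eq:poenv} at $m=1$. For a cell attached along an acyclic cofibration $X\to Y$, the relevant map $Q^{\kay}_{\kay-1}(X\sto Y)\to Y^{(\kay)}$ is an equivariant acyclic cofibration, as used in Example Theorem~\ref{ethm:model} to show that regular $\bB J$-cofibrations are weak equivalences; this gives a weak equivalence at each filtration stage, and passing to the colimit completes the verification. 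Composing with the first factor gives the desired weak equivalence $L(U^{\oA}A)\to U^{\oB}B$ of monoid objects, and the corollary follows.
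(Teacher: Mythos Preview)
Your approach is correct and closely parallels the paper's. The paper's sketch simply asserts that ``the argument above'' yields the weak equivalence $U^{\oA}A\to R(U^{\oB}B)$ of $\mtimes$-monoids; your factorization through $U^{\oB}(P_{L,f}A)$ is the natural way to unpack that assertion, and your observation that the second factor requires $U^{\oB}$ to preserve weak equivalences between cofibrant $\oB$-algebras is a detail the paper suppresses. To finish that step cleanly, invoke Ken Brown's lemma rather than a factorization: your filtration argument shows $U^{\oB}$ takes acyclic cofibrations between cofibrant $\oB$-algebras to weak equivalences, and Ken Brown then gives the conclusion for all weak equivalences (your factorization into an acyclic cofibration followed by an acyclic fibration leaves the acyclic fibration unhandled). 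One further point: the final theorem of Section~\ref{sec:modules} is stated for a weak equivalence of monoids within a single category, so it does not literally cover the change-of-categories adjunction here. The paper instead argues directly that both adjoints on module categories preserve coproducts, homotopy cofiber sequences, and sequential homotopy colimits up to weak equivalence, and then checks the derived unit on free modules; you could either repeat that argument or compose the Section~\ref{sec:modules} theorem with the (standard) Quillen equivalence between $U^{\oA}A$-modules in $\mC$ and $L(U^{\oA}A)$-modules in $\mC'$ induced by the underlying Quillen equivalence $L\dashv R$.
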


\begin{proof}[Sketch proof]
The argument above shows that under the given hypotheses, the map of
$\mtimes$-monoids $U^{\oA}A\to R(U^{\oB}B)$ is a weak
equivalence. The left and right adjoint functors in the Quillen adjunction on
module categories are given by
$U^{\nsB}B\mtimes_{L(U^{\nsA}A)}L(-)$ and $R$, respectively. These
both preserve coproducts, homotopy cofiber sequences, and 
sequential homotopy colimits up to weak equivalence.  It follows that
the unit of the adjunction $X\to R(U^{\nsB}B\mtimes_{L(U^{\nsA}A)}LX)$
is a weak equivalence for every cofibrant $A$-module~$X$.
\end{proof}

The analogous result also holds for modules over algebras on
non-symmetric operads, proved by essentially the same filtration
argument: we have a non-symmetric version $\nU^{\nsO}_{A}(m)$ of
Construction~\ref{cons:eop}.  In this case, the resulting objects do
not assemble into an operad; nevertheless, $\nU^{\nsO}_{A}(1)$
still has the structure of a $\mtimes$-monoid and coincides with the
(non-symmetric) enveloping algebra $\nU^{\nsO}A$.  The
non-symmetric analogue of~\eqref{eq:poenv} holds, and the filtration
argument (under the hypotheses of the previous corollary) proves that
the map $\nU^{\nsA}A\to R(\nU^{\nsB}B)$ is a weak equivalence of
$\mtimes$-monoids.  We conclude that the unit map $X\to
R(\nU^{\nsB}B\mtimes_{L\nU^{\nsA}A}LX)$ is a weak equivalence for every
cofibrant $A$-module $X$.

\section{Enveloping Algebras, Moore Algebras, and Rectification}
\label{sec:rectass}

In the special case of Example~\ref{ex:dmeq1}, Example
Theorem~\ref{ethm:rect} gives an equivalence of the homotopy category
of $A_{\infty}$ algebras (over a given $A_{\infty}$ operad) with the
homotopy category of associative algebras, in
particular constructing an associative algebra rectification of an
$A_{\infty}$ algebra. We know another way to construct an associative algebra
from an $A_{\infty}$ algebra, namely the (non-symmetric) enveloping
algebra.  In the case when the $A_{\infty}$ operad is the operad of little
$1$-cubes $\nsC_{1}$, there is also a classical rectification called
the Moore algebra.  The purpose of this section is to compare these
constructions.

We first consider the rectification of Example Theorem~\ref{ethm:rect} and the
non-symmetric enveloping algebra.  Let $\nsO$ be a non-symmetric operad
and $\epsilon \colon \nsO\to \nsAss$ a weak equivalence.  Under the
hypotheses of Example Theorem~\ref{ethm:rect}, the rectification
(change of operads) functor $P_{\epsilon}$ associated to $\epsilon$
gives a $\mtimes$-monoid $P_{\epsilon}A$ and a map of $\nsO$-algebras
$A\to P_{\epsilon}A$ that is a weak equivalence when $A$ is cofibrant.
As part of the proof of Example Theorem~\ref{ethm:rect}, we get a weak
equivalence of enveloping operads
\[
\oU^{\oO}_{A}\to \oU^{\oAss}_{P_{\epsilon}A}.
\]
As mentioned at the end of the previous section, the non-symmetric
version of this argument also works to give a weak equivalence of
$\mtimes$-monoids
\[
\nU^{\nsO}A\to \nU^{\nsAss}(P_{\epsilon}A).
\]
Moreover, in the case of the associative algebra operad $\nsAss$, we have a natural
isomorphism of $\mtimes$-monoids $\nU^{\nsAss}M\to M$ for any
$\mtimes$-monoid $M$. 
Putting this together, we get the following theorem.

\begin{thm}\label{thm:Uwk}
Let $\mC$ be a symmetric monoidal category and $\oO$ an
$A_{\infty}$ operad that fall into one of the examples of
Theorem~\ref{ethm:model}.(a)-(c).  Write $\epsilon\colon
\nsO\to \nsAss$ for the weak equivalence identifying $\oO$ as an
$A_{\infty}$ operad.  If $A$ is a cofibrant
$\oO$-algebra then the natural maps 
\[
A\to P_{\epsilon}A\iso \nU^{\nsAss}P_{\epsilon}A\from \nU^{\nsO}A
\]
are weak equivalences of $\oO$-algebras.
\end{thm}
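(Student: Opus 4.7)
The plan is to verify each of the three maps in the displayed chain separately, relying on material already developed in earlier sections. For the first map $A\to P_{\epsilon}A$, the intent is to apply Example Theorem~\ref{ethm:rect} to the induced map of symmetric operads $\oO\iso \nsO\mtimes \oAss \to \nsAss\mtimes \oAss\iso \oAss$ obtained by multiplying $\epsilon$ with the identity on $\oAss$. Since $\oO$ is $A_{\infty}$, each $\nsO(m)$ is (equivariantly) contractible, and Examples~\ref{ex:dmeq1}--\ref{ex:dmeq3} then identify this induced map as a derived monad equivalence in whichever of the categories of Example Theorem~\ref{ethm:model}.(a)--(c) we are working in. Example Theorem~\ref{ethm:rect} consequently gives a Quillen equivalence $P_{\epsilon}\colon \mC[\oO]\tofrom \mC[\oAss]$, so the unit map $A\to U_{\epsilon}P_{\epsilon}A$ is a weak equivalence for any cofibrant $\oO$-algebra $A$.

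For the middle isomorphism $P_{\epsilon}A\iso \nU^{\nsAss}P_{\epsilon}A$, the $\oAss$-algebra $P_{\epsilon}A$ corresponds to a $\mtimes$-monoid by Example~\ref{ex:mon}, and viewed as an $\nsAss$-algebra any such $\mtimes$-monoid $M$ satisfies $\nU^{\nsAss}M\iso M$, as recorded in the example following Proposition~\ref{prop:modulecat2}. Inspecting the defining coequalizer~\eqref{eq:nUA} directly also makes this transparent: the identity summand of $\nsAss(m+1)$ together with the $\mtimes$-monoid multiplication identifies the coequalizer with $M$.

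The third map $\nU^{\nsO}A\to \nU^{\nsAss}P_{\epsilon}A$ is the substantive step; my plan is to invoke the non-symmetric version of the filtration argument used in the proof of Example Theorems~\ref{ethm:rect} and~\ref{ethm:compare}, summarized in the paragraph following Corollary~\ref{cor:modcomp}. Write $A$ as a regular $\bO I$-cell $\oO$-algebra with sequential stages $A_{n}$, and induct on $n$ to show that $\nU^{\nsO}A_{n}\to \nU^{\nsAss}P_{\epsilon}A_{n}$ is a weak equivalence. The base case $A_{0}=\oO(0)$ reduces (on passing to the relevant summand of the underlying functor) to the derived monad equivalence established in the first step. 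For the inductive step with $A_{n+1}=A_{n}\amalg^{\mC[\oO]}_{\bO X}\bO Y$ for a coproduct of cells $X\to Y$ in $I$, the non-symmetric analogue of the pushout identification~\eqref{eq:poenv} filters both $\nU^{\nsO}_{A_{n+1}}$ and $\nU^{\nsAss}_{P_{\epsilon}A_{n+1}}$ compatibly by pushouts along the maps $Q^{\kay}_{\kay-1}(X\sto Y)\to Y^{(\kay)}$. In the examples~(a)--(c) these gluing maps are (geometric realizations of) equivariant Hurewicz cofibrations, so weak equivalences propagate through the filtration and through its colimit, yielding the desired weak equivalence at stage $n+1$; passing to the colimit in $n$ gives the result for $A$.

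The main obstacle is not conceptual but bookkeeping: one must set up the non-symmetric enveloping operad construction $\nU^{\nsO}_{A}(m)$ in parallel with Construction~\ref{cons:eop} and verify that the filtration/pushout machinery of Section~\ref{sec:limit} transfers intact when the symmetric group coinvariants $\mtimes_{\Sigma_{m}}$ are replaced by their non-symmetric counterparts. Granted that transfer, the combinatorics of $Q^{\kay}_{i}(X\sto Y)$ and the induction carry over verbatim from the symmetric case treated in Section~\ref{sec:compare}, and the theorem follows by concatenating the three weak equivalences.
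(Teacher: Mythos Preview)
Your proposal is correct and follows essentially the same approach as the paper: the paper's argument (given in the paragraph immediately preceding the theorem statement) likewise treats the three maps separately, using Example Theorem~\ref{ethm:rect} for $A\to P_{\epsilon}A$, the identification $\nU^{\nsAss}M\iso M$ for the middle isomorphism, and the non-symmetric version of the enveloping-operad filtration argument from Section~\ref{sec:compare} (specifically, the paragraph after Corollary~\ref{cor:modcomp}) for $\nU^{\nsO}A\to \nU^{\nsAss}P_{\epsilon}A$. Your only addition is an explicit sketch of the inductive filtration step, which the paper simply cites by reference.
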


We now focus on $A_{\infty}$ algebras for the little $1$-cubes operad
$\nsC_{1}$, where we can describe results both more concretely and in
much greater generality.  For the rest of the section we work in the
context of a symmetric monoidal category enriched over topological
spaces as in Section~\ref{sec:enrich}: let $\mC$ be a closed symmetric
monoidal category with countable colimits, and let $L\colon \Top\to
\mC$ be strong symmetric monoidal left adjoint functor (whose right
adjoint we denote as $R$).  Then as per Theorem~\ref{thm:enrich},
$\mC$ becomes enriched over topological spaces and we have a notion of
homotopies and homotopy equivalences in $\mC$, defined in terms of
mapping spaces or equivalently in terms of tensor with the unit
interval.  We also have $L\nsC_{1}$ as a non-symmetric operad in
$\mC$; for an $L\nsC_{1}$-algebra $A$, we give a concrete construction
of the enveloping algebra $\nU A$, mostly following~\cite[\S2]{Smash}.
We first write the formulas and then explain where they come from.

\begin{cons}\cite[\S2]{Smash}
Let $\bar D$ be the space of subintervals of $[0,1]$ and let
$D$ be the subspace of $\bar D$ of those intervals that do not start at
$0$. We have a canonical isomorphism $\bar D\iso \nsC_{1}(1)$ (sending
a subinterval to the $1$-tuple containing it) that we elide notation for.
Under this isomorphism, the composition law $\Gamma^{1}_{1}$ defines a
pairing $\gamma \colon \bar D\times \bar D\to \bar D$ that satisfies
the formula  
\[
\gamma ([x,y],[x',y'])=[x+(y-x)x',x+(y-x)y'].
\]
We note that $\gamma$ restricts to a pairing $D\times D\to D$ and for
formal reasons $\gamma$ is associative
\begin{multline*}
\gamma (\gamma ([x,y],[x',y']),[x'',y''])=\\
[x+(y-x)x'+(y-x)(y'-x')x'',x+(y-x)x'+(y-x)(y'-x')y'']\\
=\gamma([x,y],\gamma ([x',y'],[x'',y'']))
\end{multline*}
and unital
\[
\gamma ([0,1],[x,y])=[x,y]=\gamma([x,y],[0,1]),
\]
making $\bar D$ a topological monoid and $D$ a sub-semi-group.  Define 
$\alpha \colon D\times D\to \nsC_{1}(2)$ by
\[
\alpha([x,y],[x',y'])=([0,\tfrac{x}{x+(y-x)x'}], [\tfrac{x}{x+(y-x)x'},1]).
\]
Let $DA$ be
the object of $\mC$ defined by the following pushout diagram
\[
\xymatrix@-1pc{%
LD \mtimes \mS \ar[d]\ar[r]&LD \mtimes A\ar[d]\\
L\bar D\mtimes \mS\ar[r]&DA
}
\]
where the top map is induced by the composite of the isomorphism
$\mS\iso L\nsC_{1}(0)$ (from the strong symmetric monoidal structure
on $L$) and the $L\nsC_{1}$-action map $L\nsC_{1}(0)\to A$.  We use
$\gamma$ and $\alpha$ to define a multiplication on $DA$ as follows.
We use the map
\[
(LD\mtimes A)\mtimes (LD\mtimes A)\to LD\mtimes A\to DA
\]
coming from the map
\begin{multline*}
(LD\mtimes A)\mtimes (LD\mtimes A)\iso 
L(D\times D)\mtimes (A\mtimes A)
\to\\
L(D\times \nsC_{1}(2))\mtimes (A\mtimes A)
\iso
LD \mtimes (L\nsC_{1}(2)\mtimes (A\mtimes A))
\to LD\mtimes A
\end{multline*}
induced by the map $(\gamma,\alpha)\colon D\times D\to D\times
\nsC_{1}(2)$ and the $L\nsC_{1}$-action map on $A$.  We note that both
associations
\[
(LD\mtimes A)\mtimes (LD\mtimes A)\mtimes (LD\mtimes A)\to LD\mtimes A
\]
coincide: both factor through the map
\[
(LD\mtimes A)\mtimes (LD\mtimes A)\mtimes (LD\mtimes A)
\iso
L(D\times D\times D)\mtimes A^{(3)}\to L(D\times \nsC_{1}(3))\mtimes A^{(3)}
\]
induced by the map $D\times D\times D\to D\times \nsC_{1}(3)$ given on
the $D$ factor as $\gamma \circ (\gamma \times \id)=\gamma \circ
(1\times \gamma)$ and on the $\nsC_{1}(3)$ factor by the formula
\[
[x,y],[x',y'],[x'',y'']\mapsto ([0,a],[a,b],[b,1])
\]
where 
\[
a=\frac{x}{x+(y-x)(x'+(y'-x')x'')}, \qquad 
b=\frac{x+(y-x)x'}{x+(y-x)(x'+(y'-x')x'')}.
\]
When restricted to maps
\[
(LD\mtimes \mS)\mtimes (LD\mtimes A), 
(LD\mtimes A)\mtimes (LD\mtimes \mS)\to DA,
\]
this map coincides with the map induced by just $\gamma$ and the unit
isomorphism of $\mC$ and so extends to compatible maps
\begin{align*}
(L\bar D \mtimes \mS)\mtimes (L\bar D \mtimes \mS)&\to DA\\
(L\bar D \mtimes \mS)\mtimes (LD \mtimes A)&\to DA\\
(LD \mtimes A)\mtimes (L\bar D \mtimes \mS)&\to DA
\end{align*}
and defines an associative multiplication on $DA$.  The map $\mS\to
DA$ induced by the inclusion of the element $[0,1]$ of $\bar D$ is a
unit for this multiplication.
\end{cons}

To understand the construction, it is useful to think of $D$ as a
subspace of $\nsC_{1}(2)$ rather than a subspace of $\nsC_{1}(1)$ via
the embedding
\[
[x,y]\mapsto ([0,x],[x,y]).
\]
Then we have a map $DA\to \nU A$ sending $L\bar D\mtimes \mS$ and
$LD\mtimes A$ to the 0 and 1 summands 
\[
L\bar D\mtimes \mS\iso L\nsC_{1}\mtimes A^{(0)}\qquad \text{and}\qquad
LD\mtimes A\to L\nsC_{1}(2)\mtimes A
\]
in the coequalizer~\eqref{eq:nUA} for $\nU A$.  We also have a map back that sends
the summand $L\nsC_{1}(n+1)\mtimes A^{(n)}$ (for $n\geq 1$) to
$LD\mtimes A$ by remembering just the last interval and using the rest
to do the multiplication on $A$; specifically, for
$[x_{1},y_{1}],\dotsc,[x_{n+1},y_{n+1}]$, we use the element of
$\nsC_{1}(n)$ corresponding to
\[
[\tfrac{x_{1}}{x_{n+1}},\tfrac{y_{1}}{x_{n+1}}],\dotsc,[\tfrac{x_{n}}{x_{n+1}},\tfrac{y_{n}}{x_{n+1}}]
\]
for the map $A^{(n)}\to A$.  It is straightforward to check that
these give inverse isomorphisms of objects of $\mC$; see
\cite[2.5]{Smash}.  

The isomorphism of the previous paragraph then forces the formula for
the multiplication.  Intuitively speaking, the first box in $D$
(viewed as a subset of $\nsC_{1}(2)$) holds the algebra (from the
tensor) and the second box is a placeholder to plug in the module
variable; the complement $\bar D \setminus D$ corresponds to the first
box having length zero and then only the unit of the algebra can go
there.  For the composition, the right copy gets plugged into the
second box of the left copy to give an element of $\nsC_{1}(3)$ (i.e.,
the operadic composition $\ell\circ_{2}r=\Gamma^{2}_{1,2}(\ell;1,r)$
where $\ell$ is the element of the left copy of $D$ and $r$ is the
element of the right copy of $D$); the first and second boxes are on
the one hand rescaled to an element of $\nsC_{1}(2)$ that does the
multiplication on the copies of $A$ and on the other hand joined to give with the third box the
new element of $D$ (viewed as a subspace of $\nsC_{1}(2)$). The
associativity is straightforward to visualize 
in terms of plugging in boxes when written down on paper. (See
Section~2 of~\cite{Smash}.) In the case when one of the elements
comes from $\bar D \setminus D$, the corresponding copy of $A$ is
restricted to the unit $\mS$ and the first box of zero length also
works like a unit.

Using the isomorphism of $\mtimes$-monoids $\nU A\iso DA$, we now have
the following comparison theorem for the underlying objects of $\nU A$ and $A$.

\begin{prop}\label{prop:UAhty}\cite[1.1]{Smash}
The map of $\nU A$-modules $\nU A\iso \mS\mtimes \nU A\to A$ induced by the map
$\mS\iso L\nsC_{1}(0)\to A$ is a homotopy equivalence of objects of $\mC$.
\end{prop}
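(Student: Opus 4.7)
The plan is to use the concrete description $\nU A \iso DA$ just constructed and to exhibit an explicit homotopy inverse to $DA \to A$. First I would spell out the map on each summand of the pushout. Under $DA \iso \nU A$, the subobject $L\bar D\mtimes \mS$ corresponds to the $n=0$ summand $L\nsC_{1}(1)\mtimes \mS$, and the map to $A$ sends $([x,y], *)$ to $\xi_{1}([x,y], \eta)$, where $\eta$ denotes the composite $(\mS \iso L\nsC_{1}(0) \to A)$; the algebra axiom $\xi_{1} \circ (\id \mtimes \xi_{0}) = \xi_{0}\circ(\Gamma^{1}_{0}\mtimes \id)$ together with $\Gamma^{1}_{0}([x,y];*) = *$ collapses this to $\eta$ itself, independently of $[x,y]$. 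On $LD \mtimes A$ the isomorphism sends $([x,y], a)$ to $(([0,x],[x,y]), a)$ in the $n=1$ summand, which maps to $\xi_{2}(([0,x],[x,y]); a, \eta)$; the associativity identity $\Gamma^{2}_{1,0}(([0,x],[x,y]); [0,1]; *) = [0,x]$ together with the unit axiom for $[0,1]\in \nsC_{1}(1)$ then collapses this to $[0,x]\cdot a$.

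Next I would choose a base-point $d_{0} = [x_{0}, y_{0}] \in D$ (so $x_{0} > 0$) and define a candidate homotopy inverse $\sigma \colon A \to DA$ as the composite $A \iso \mS \mtimes A \to LD \mtimes A \to DA$ determined by $d_{0}$. The composite $A \overto{\sigma} DA \to A$ is then the self-map $a \mapsto [0,x_{0}]\cdot a$ of $A$, which is homotopic to the identity $a = [0,1]\cdot a$ via the $\nsC_{1}(1)$-action along the path $s \mapsto [0, 1-s(1-x_{0})]$ joining $[0,1]$ to $[0, x_{0}]$ in $\bar D$.

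For the reverse composite I would build a homotopy $H \colon DA \mtensor [0,1] \to DA$ piecewise on the pushout, using the straight-line family $\Phi_{t}[x,y] = (1-t)[x,y] + t\, d_{0}$ in $\bar D$. On $LD \mtimes A$ set $H_{t}([x,y], a) = (\Phi_{t}[x,y],\, [0, 1-t(1-x)]\cdot a)$, which is well-defined because $1-t(1-x) > 0$ whenever $x > 0$; on $L\bar D \mtimes \mS$ set $H_{t}([x,y], *) = (\Phi_{t}[x,y], *)$. On the overlap $LD \mtimes \mS$ the first formula applied at $a = \eta$ gives $(\Phi_{t}[x,y], \eta)$ because $\nsC_{1}(1)$ acts trivially on $\eta$, and this is identified in $DA$ with $(\Phi_{t}[x,y], *)$ as soon as the first coordinate $(1-t)x + t x_{0}$ is positive, which it is for $x > 0$. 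Thus $H$ descends to a continuous homotopy on $DA$; at $t=0$ it is $\id_{DA}$, and at $t=1$ both branches land in the image of $\sigma$ and together realize $\sigma \circ (DA \to A)$.

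The main obstacle will be the compatibility check at the gluing, together with the edge case $x = 0$ where the $LD \mtimes A$-formula is unavailable: at that boundary only the $L\bar D \mtimes \mS$-branch defines $H$, and the argument works precisely because $DA \to A$ is already constant equal to $\eta$ on $L\bar D \mtimes \mS$, so the homotopy there needs no $A$-side adjustment. Both compatibility checks ultimately reduce to the single operadic triviality $\nsC_{1}(1)\cdot \eta = \eta$, which is what makes the $LD$- and $L\bar D$-branches of $H$ coincide in the pushout $DA$.
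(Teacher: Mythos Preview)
Your proof is correct and follows essentially the same approach as the paper's: pick a basepoint $d_{0}\in D$ to define a section $\sigma$, use a path in $\nsC_{1}(1)$ to the operad identity for the composite on $A$, and for the composite on $DA$ combine a deformation of the pair $(\bar D,D)$ onto $d_{0}$ with a $\nsC_{1}(1)$-action homotopy on the $A$ factor. Your explicit formulas $\Phi_{t}[x,y]=(1-t)[x,y]+t\,d_{0}$ and $[0,1-t(1-x)]$ are precisely (up to reversing the parameter) specific linear instances of the paper's ``homotopy of self-maps of the pair $(\bar D,D)$'' and ``homotopy of self-maps of $\nsC_{1}(1)$ from the identity to the constant map on $1$'' applied to $[0,x]$; the compatibility check on the gluing via $\nsC_{1}(1)\cdot\eta=\eta$ is exactly what makes the paper's pair-preservation hypothesis work.
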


\begin{proof}
In concrete terms, the map in the statement is induced by the map
\[
LD\mtimes A\to L\nsC_{1}(1)\mtimes A\to A
\]
for the map $D\to \nsC_{1}(1)$ that sends $[x,y]$ to $([0,x])$, which is
compatible with the map 
\[
L\bar D\mtimes \mS\to \mS\to A.
\]
We can use any element of $D$ to produce a map (in $\mC$) from $A$ to
$\nU A$; a path to the operad identity element $1$ in $\nsC_{1}(1)$
(which corresponds to $[0,1]\subseteq [0,1]$) then induces a homotopy
of the composite map $A\to A$ to the identity map of $A$.  We can
construct a homotopy from the composite to the identity on $\nU A$ using
a homotopy of self-maps of $\nsC_{1}(1)$ from the identity to the
constant map on $1$ (combined with the $\nsC_{1}(1)$ action map on
$A$) and a homotopy of self-maps of the pair $(\bar D,D)$ from the
constant map (on the chosen element of $D$) to the identity map.  For
example, if the chosen element of $D$ corresponds to the subinterval
$[a,b]$ (with $a\neq 0$) then the linear homotopy
\[
[x,y],t \mapsto [xt+a(1-t),yt+b(1-t)]
\]
is such a homotopy of self-maps of the pair.
\end{proof}

In the context of spaces, J.~C.~Moore invented an associative version
of the based loop space by parametrizing loops with arbitrary length
intervals.  This idea extends to the current context to give another
even simpler construction of a $\mtimes$-monoid equivalent (in
$\mC$) to an $L\nsC_{1}$-algebra $A$.

\begin{cons}
Define $MA$ to be the object of $\mC$ defined by the pushout diagram
\[
\xymatrix@-1pc{%
L\bR^{>0} \mtimes \mS \ar[d]\ar[r]&L\bR^{>0} \mtimes A\ar[d]\\
L\bR^{\geq 0}\mtimes \mS\ar[r]&MA
}
\]
(where $\bR^{>0}\subset \bR^{\geq 0}$ are the usual subspaces of
positive and non-negative real numbers, respectively).  We give this
the structure of a $\mtimes$-monoid with the unit $\mS\to MA$ induced
by the inclusion of $0$ in $\bR^{\geq 0}$ and multiplication
$MA\mtimes MA\to MA$ induced by the map
\begin{multline*}
(L\bR^{>0} \mtimes A)\mtimes (L\bR^{>0} \mtimes A)
\iso L(\bR^{>0}\times \bR^{>0})\mtimes (A\mtimes A)\\
\to L(\bR^{>0}\times \nsC_{1}(2))\mtimes (A\mtimes A)
\iso
L\bR^{>0}\mtimes (L\nsC_{1}(2)\mtimes (A\mtimes A))
\to L\bR^{>0}\mtimes A
\end{multline*}
induced by the $\nsC_{1}$-action on $A$ and the map 
\[
c\colon (r,s)\in \bR^{>0}\times \bR^{>0} \mapsto 
(r+s, ([0,\tfrac{r}{r+s}],[\tfrac{r}{r+s},1]))
\in \bR^{>0}\times \nsC_{1}(2).
\]
\end{cons}

The idea is that the element of $\bR^{>0}$ specifies a length (with
the zero length only available for the unit) and the multiplication
uses the proportionality of the two lengths to choose an element of
$\nsC_{1}(2)$ for the multiplication on $A$; the two lengths add to
give the length in the result.  In the case when $\mC$ is the category
of spaces and $A=\Omega X$ is the based loop space of a space $X$,
$MA$ is the Moore loop space.  An element is specified by an element
$r$ of $\bR^{\geq 0}$ together with an element of $\Omega X$ (which
must be the basepoint when $r=0$) but can be visualized as a based
loop parametrized by $[0,r]$ (or for $r=0$ the constant length zero
loop at the basepoint).  The multiplication concatenates loops by
concatenating the parametrizations, an operation that is strictly
associative and unital.

We can compare the $\mtimes$-monoids $MA$ and $\nU A$ through a third $\mtimes$-monoid $NA$
constructed as follows.  Let $N=\bR^{> 0}\times \bR^{>0}\times
\bR^{\geq 0}$, let $\bar N=\bR^{\geq 0}\times \bR^{>0}\times \bR^{\geq
0}$, and define $NA$ by the pushout diagram
\[
\xymatrix@-1pc{%
LN \mtimes \mS \ar[d]\ar[r]&LN \mtimes A\ar[d]\\
L\bar N\mtimes \mS\ar[r]&NA.
}
\]
We have maps $\bar N\times \bar N\to \bar N$ and
$N\times N\to \nsC_{1}(2)$ defined by
\begin{align*}
((r,s,t),(r',s',t'))\in \bar N\times \bar N
&\mapsto (r+sr',ss',st'+t) \in \bar N\\
((r,s,t),(r',s',t'))\in N\times N
&\mapsto c(t,st')=([0,\tfrac{r}{r+sr'}],[\tfrac{r}{r+sr'},1]) \in \nsC_{1}(2),
\end{align*}
which we use to construct the multiplication on $NA$ by the same
scheme as above
\[
(LN\mtimes A)\mtimes (LN\mtimes A)\iso
L(N\times N)\mtimes (A\mtimes A)\to
L(N\times \nsC_{1}(2))\mtimes (A\mtimes A)\to LN\mtimes A.
\]
The unit is the map $\mS\to NA$ induced by the inclusion of $(0,1,0)$
in $\bar N$.

The parametrizing space $N=\{(r,s,t)\}$ generalizes $D$ by allowing
$[r,s]$ to be a subinterval of $[0,r+s+t]$ instead of $[0,1]$, or from
another perspective, generalizes lengths in the definition on the
Moore algebra by incorporating a scaling factor $s$ and padding of
length $t$.
In other words, we have maps
\begin{align*}
[x,y]\in \bar D&\mapsto (x,y-x,1-y)\in \bar N\\
r\in \bR^{\geq 0}&\mapsto (r,1,0)\in \bar N.
\end{align*}
These maps induce maps of $\mtimes$-monoids $\nU A\iso DA\to NA$ and $MA\to NA$,
respectively, and the argument of Proposition~\ref{prop:UAhty} shows
that these maps are homotopy equivalences in $\mC$.  We state this as
the following theorem, repeating the conventions of this part of the
section for easy reference.

\begin{thm}\label{thm:UNM}
Let $\mC$ be a closed symmetric monoidal category admitting countable
colimits and enriched over spaces via a strong symmetric monoidal left
adjoint functor $L$.  Then for algebras over the little $1$-cubes
operad ($L\nsC_{1}$-algebras) the non-symmetric enveloping algebra
$\nU A$ and the Moore algebra $MA$ fit in a natural zigzag of $\mtimes$-monoids 
\[
\nU A\to NA\from MA
\]
where the maps are homotopy equivalences in $\mC$.  Moreover, the
canonical maps $\nU A\to A$ and $MA\to A$ are homotopy equivalences in
$\mC$. 
\end{thm}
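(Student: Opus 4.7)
The plan is to construct the zigzag from the formulas already written down, verify the monoid structure, and then prove each comparison map is a homotopy equivalence in $\mC$ by the same contractibility-of-parameters technique used in Proposition~\ref{prop:UAhty}.

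First, I would check that the two displayed maps $\bar D \to \bar N$, $[x,y] \mapsto (x, y-x, 1-y)$, and $\bR^{\geq 0} \to \bar N$, $r \mapsto (r,1,0)$, are continuous, send the unit elements $[0,1]$ and $0$ to $(0,1,0)$, and restrict to $D \to N$ and $\bR^{>0} \to N$. Functoriality of pushouts then induces maps $\nU A \iso DA \to NA$ and $MA \to NA$ in $\mC$. To upgrade these to $\mtimes$-monoid maps I would verify that the multiplication formulas on $NA$ pull back to those already written for $DA$ and $MA$. The key identities are (a) $\gamma([x,y],[x',y'])$ corresponds under $\bar D \hookrightarrow \bar N$ to the pairing $((r,s,t),(r',s',t'))\mapsto (r+sr', ss', st'+t)$, since setting $(r,s,t) = (x, y-x, 1-y)$ recovers $x + (y-x)x'$ and $x + (y-x)y'$; (b) $\alpha([x,y],[x',y']) = c(t, s t')$ under the same substitution; and (c) on $\bR^{\geq 0}\hookrightarrow \bar N$ the formula $(r+sr', ss', st'+t)$ collapses to $r+r'$, matching the Moore algebra multiplication. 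Unit compatibility is immediate.

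Next, I would show that each of the two maps $DA \to NA$ and $MA \to NA$ is a homotopy equivalence in $\mC$. For $DA \to NA$, I would write down a deformation retraction of the pair $(\bar N, N)$ onto the image of $(\bar D, D)$. A convenient choice is to rescale so that $r+s+t=1$, via the linear homotopy
\[
\big((r,s,t),u\big)\longmapsto \Big((1-u)(r,s,t) + u\big(\tfrac{r}{r+s+t},\tfrac{s}{r+s+t},\tfrac{t}{r+s+t}\big)\Big),
\]
which preserves both $N$ and $\bar N\setminus N$. For $MA \to NA$ I would contract the $s,t$ coordinates to $(1,0)$ by the linear homotopy $((r,s,t),u)\mapsto (r, (1-u)s+u, (1-u)t)$, observing that it preserves the locus $r>0$ and the locus $r=0$. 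In both cases, because $L$ is strong symmetric monoidal and provides the enrichment by Theorem~\ref{thm:enrich}, these homotopies of parameter spaces induce homotopies on the pushouts $NA\to NA$ exhibiting a homotopy inverse, exactly as in Proposition~\ref{prop:UAhty}.

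Finally, $\nU A \to A$ is a homotopy equivalence in $\mC$ by Proposition~\ref{prop:UAhty}. For $MA \to A$, the canonical map is the one induced by the collapse $\bR^{\geq 0} \to \{0\} \to \nsC_1(1)$ at the operadic identity $1\in \nsC_1(1)$; on the pushout it sends $L\bR^{\geq 0}\mtimes \mS$ to $A$ via the unit and $L\bR^{>0}\mtimes A$ to $A$ via $\nsC_1(1)\mtimes A \to A$ at the identity. The argument of Proposition~\ref{prop:UAhty} applies verbatim: any choice of length gives a map $A\to MA$ inverse up to homotopy (use a path in $\bR^{\geq 0}$ back to $0$ in one direction, and any null-homotopy of $\bR^{\geq 0}$ composed with a null-homotopy of self-maps of $\nsC_1(1)$ onto $1$ in the other). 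The main obstacle throughout is the bookkeeping at the boundary stratum where $\mS$ replaces $A$ in each pushout; but the formulas have been chosen precisely so that on this stratum the multiplication collapses to $\gamma$ (resp. addition) alone and unit compatibility is automatic, so every verification reduces to an explicit check in parameter space.
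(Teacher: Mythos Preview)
Your proposal is correct and follows essentially the same approach as the paper, which merely asserts that ``the argument of Proposition~\ref{prop:UAhty} shows that these maps are homotopy equivalences in $\mC$'' without spelling out the homotopies; you correctly supply explicit deformation retractions of the parameter pairs $(\bar N,N)$ onto the images of $(\bar D,D)$ and of $(\bR^{\geq 0},\bR^{>0})$, and these induce the required homotopies on the pushouts exactly as you describe. One small remark: your description of the homotopy for $MA\to A$ is slightly garbled (``a path in $\bR^{\geq 0}$ back to $0$''), but the intended argument---choose $r_{0}\in\bR^{>0}$ for a section $A\to MA$, then linearly interpolate lengths to show the composite $MA\to A\to MA$ is homotopic to the identity---works once you note that the interpolation $(r,u)\mapsto (1-u)r+ur_{0}$ stays in $\bR^{>0}$ for $r>0$ and is compatible with the $\mS$-stratum via the pushout gluing.
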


To compare $MA$ and $A$ as $A_{\infty}$ algebras, we use a new
$A_{\infty}$ operad $\nsCl$ defined as follows.  

\begin{cons}
Let $\nsCl(0)=\bR^{\geq 0}$ and for $m>0$, let $\nsCl(m)$
be the set of ordered pairs $(S,r)$ with $r$ a positive real number
and $S$ a list of $m$ almost non-overlapping closed subintervals of
$[0,r]$ in their natural order, topologized analogously as in the
definition of $\nsC_{1}$ (as a semilinear submanifold of
$\bR^{2m+1}$).  The operadic composition is defined by scaling and
replacement of the subintervals: the basic composition
\begin{multline*}
\Gamma^{1}_{j}((([x,y]),r),(([x'_{1},y'_{1}],\dotsc,[x'_{j},y'_{j}]),r'))
=\\(([x+ax'_{1},x+ay'_{1}],\dotsc,[x+ax'_{j},x+ay'_{j}]),r+a(r'-1))
\end{multline*}
(with $a:=y-x$)
scales the interval $[0,r']$ to length $ar'$ and inserts that in
place of $[x,y]\subset [0,r]$; the resulting final interval then has
size $r-a+ar'$.  The general composition
$\Gamma^{m}_{j_{1},\dotsc,j_{m}}$ does this operation on each of the
$m$ subintervals: 
\begin{multline*}
\Gamma^{m}_{j_{1},\dotsc,j_{m}}\colon 
(([x^{0}_{1},y^{0}_{1}],\dotsc,[x^{0}_{m},y^{0}_{m}]),r_{1}),\\
(([x^{1}_{1},y^{1}_{1}],\dotsc,[x^{1}_{j_{1}},y^{1}_{j_{1}}]),r_{1}),
\dotsc, 
(([x^{m}_{1},y^{m}_{1}],\dotsc,[x^{m}_{j_{m}},y^{m}_{j_{m}}]),r_{m}),\\
\mapsto\\
(([x^{0}_{1}+a_{1}x^{1}_{1},x^{0}_{1}+a_{1}y^{1}_{1}],\dotsc, 
[s_{m-1}+x^{0}_{m}+a_{m}x^{m}_{j_{m}},
s_{m-1}+x^{0}_{m}+a_{m}y^{m}_{j_{m}}]),
r_{0}+s_{m})
\end{multline*}
where $a_{i}:=y^{0}_{i}-x^{0}_{i}$ and $s_{i}=a_{1}(r_{1}-1)+\dotsb
+a_{i}(r_{i}-1)$.  In the case when one of the $j_{i}$ is zero, that
$j_{i}$ contributes no subintervals but still scales the original
subinterval $[x^{0}_{i},y^{0}_{i}]$ to length $a_{i}r_{i}$ (or removes
it when $r_{i}=0$).  The operad identity element is the element
$(([0,1]),1)\in \nsCl(1)$. 
\end{cons}

The maps $\nsC_{1}(m)\to \nsCl(m)$ that include $\nsC_{1}(m)$ as
the length $1$ subspace assemble to a map of operads $i\colon
\nsC_{1}\to \nsCl$.  We also have a map of operads $j\colon
\nsAss\to \nsCl$ induced by sending the unique element of
$\nsAss(m)$ to the element
\[
(([0,1],[1,2],\dotsc,[m-1,m]),m)
\]
of $\nsCl(m)$.  Using the map $j$, an $L\nsCl$-algebra has
the underlying structure of a $\mtimes$-monoid.  A straightforward
check of universal properties proves the following proposition.

\begin{prop}
The functor that takes a $\nsC_{1}$-algebra $A$ to its Moore algebra
$MA$ is naturally isomorphic to the functor that takes $A$ to the
underlying $\mtimes$-monoid of the pushforward $P_{Li}A$ for the map of
operads $Li\colon L\nsC_{1}\to L\nsCl$.
\end{prop}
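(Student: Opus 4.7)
The plan is to exhibit $MA$ as the initial $L\nsCl$-algebra under $A$ via an $L\nsC_1$-algebra map, which by the adjunction $P_{Li}\dashv U_{Li}$ characterizes $P_{Li}A$ and yields $P_{Li}A \iso MA$ as $L\nsCl$-algebras; restricting along $Lj\colon L\nsAss\to L\nsCl$ then produces the natural $\mtimes$-monoid isomorphism asserted.

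First I will equip $MA$ with a natural $L\nsCl$-algebra structure whose restriction along $Lj$ recovers the defined $\mtimes$-monoid structure. Using the identifications $\nsCl(m)\iso \nsC_{1}(m)\times \bR^{>0}$ for $m>0$ and $\nsCl(0)=\bR^{\geq 0}$, the action of $(([x_{1},y_{1}],\dotsc,[x_{m},y_{m}]),r)\in \nsCl(m)$ on inputs $(r_{i},b_{i})\in MA$ (with $b_{i}\in A$) is dictated by the $\nsCl$-composition formula: the output has length $r'=r+\sum_{i}a_{i}(r_{i}-1)$ (with $a_{i}=y_{i}-x_{i}$) and content $\tilde c(b_{1},\dotsc,b_{m})\in A$ under the $\nsC_{1}(m)$-action, where $\tilde c\in \nsC_{1}(m)$ is the rescaled-to-$[0,1]$ configuration with $i$th subinterval $[x_{i}+\sum_{j<i}a_{j}(r_{j}-1),\,x_{i}+\sum_{j<i}a_{j}(r_{j}-1)+a_{i}r_{i}]$ in $[0,r']$. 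An input $(0,*)$ from the $L\bR^{\geq 0}\mtimes\mS$ summand is handled by first operadically composing $(c,r)$ with $r_{i}=0\in \nsCl(0)$ at slot $i$ to produce an $(m-1)$-ary $\nsCl$-operation on the remaining inputs; this is consistent with the pushout identification because for $r_{i}>0$ it agrees with substituting the unit $1_{A}$ via the $\nsC_{1}(0)$-action on $A$. Associativity follows by direct comparison with the $\Gamma$ formula for $\nsCl$ together with the $\nsC_{1}$-associativity on $A$.

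Next I will check that $\eta\colon A\to MA$, $a\mapsto (1,a)$, is $L\nsC_{1}$-equivariant: $Li$ sends $c\in \nsC_{1}(m)$ to $(c,1)\in \nsCl(m)$, and evaluating the action on inputs $(1,a_{i})$ yields $r'=1$ and $\tilde c=c$, hence output $(1,c(a_{1},\dotsc,a_{m}))=\eta(c(a_{1},\dotsc,a_{m}))$. For the universal property, given an $L\nsCl$-algebra $(B,\xi^{B})$ and an $L\nsC_{1}$-map $\phi\colon A\to U_{Li}B$, the only possible $L\nsCl$-extension is
\[
\tilde\phi(r,*)=\xi^{B}_{0}(r),\qquad \tilde\phi(r,a)=\xi^{B}_{1}(([0,r],r),\phi(a));
\]
compatibility on the overlap $L\bR^{>0}\mtimes\mS$ uses that $\phi$ preserves the $\nsC_{1}(0)$-unit together with the identity $\Gamma^{1}_{0}(([0,r],r);((),1))=r\in \nsCl(0)$, immediate from the explicit $\nsCl$-composition formula, while $L\nsCl$-equivariance reduces on $L\bR^{>0}\mtimes A$-inputs to the $\nsCl$-identity
\[
\Gamma^{m}_{1,\dotsc,1}((c,r);([0,r_{1}],r_{1}),\dotsc,([0,r_{m}],r_{m}))=\Gamma^{1}_{m}(([0,r'],r');(\tilde c,1)),
\]
both sides equal to $\tilde c$ rescaled to length $r'$. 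Uniqueness holds because every element of $MA$ has the form $\xi^{MA}_{0}(r)$ or $\xi^{MA}_{1}(([0,r],r),\eta(a))$.

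The $\mtimes$-monoid match under $Lj$ is then a direct calculation: $j$ sends the arity-$2$ generator to $(([0,1],[1,2]),2)$, and the action formula with $a_{1}=a_{2}=1$ yields $r'=r_{1}+r_{2}$ and $\tilde c=([0,r_{1}/(r_{1}+r_{2})],[r_{1}/(r_{1}+r_{2}),1])=c(r_{1},r_{2})$, reproducing the Moore multiplication; the arity-$0$ generator maps to $0\in \bR^{\geq 0}$, giving the Moore unit. The main obstacle is the first step: defining the $L\nsCl$-action on $MA$ with enough care to handle both the pushout identification and the degenerate length-zero inputs, and verifying associativity across all cases via the rather intricate rescaling formula for $\Gamma$ in $\nsCl$.
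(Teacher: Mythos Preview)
Your proposal is correct and is exactly the ``straightforward check of universal properties'' that the paper invokes: the paper gives no proof beyond that phrase, and immediately after the proposition it records precisely the $\nsCl$-action on $MA$ you describe (via $\Gamma^{m}_{1,\dotsc,1}$ against $(([0,r_{i}]),r_{i})\in\nsCl(1)$ and the rescaling isomorphism $\nsCl(m)\iso \bR^{>0}\times \nsC_{1}(m)$) together with the unit $A\to MA$ given by inclusion of $1\in\bR^{>0}$. Your key operadic identity $\Gamma^{m}_{1,\dotsc,1}((c,r);([0,r_{1}],r_{1}),\dotsc)=\Gamma^{1}_{m}(([0,r'],r');(\tilde c,1))$ is the substance of the universal property verification, and your computation of the $\mtimes$-monoid structure under $Lj$ matches the defining Moore multiplication formula.
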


The $\nsCl$-action map $L\nsCl(m)\mtimes (MA)^{(m)}\to MA$ is induced
by the map
\[
\nsCl(m)\times (\bR^{>0})^{n}\to \nsCl(m)\times \nsCl(1)^{n}
\overto{\Gamma^{m}_{1,\dotsc,1}} \nsCl(m)\iso \bR^{>0}\times \nsC_{1}(m)
\]
that includes $\bR^{>0}$ in $\nsCl(1)$ by $r\mapsto (([0,r]),r)$,
where the isomorphism is the map that takes an element
$(([x_{1},y_{1}],\dotsc,[x_{m},y_{m}]),r)$ of $\nsCl(m)$ to the
element $(r,([x_{1}/r,y_{1}/r],\dotsc,[x_{m}/r,y_{m}/r]))$ of
$\bR^{>0}\times \nsC_{1}(m)$.

The map of $\nsC_{1}$-algebras that is the unit of the change of
operads adjunction $A\to P_{Li}A$ is induced by the inclusion of $1$
in $\bR^{>0}$ and is a homotopy equivalence by a (simpler) version of
the homotopy argument of Proposition~\ref{prop:UAhty}.  We do not see
how to do a similar argument for the pushforward $P_{Lj}$ from $\mtimes$-monoids
to $\nsCl$-algebras, so we do not have a direct comparison of
$\nsC_{1}$-algebras between $A$ (or $P_{Li}A$) and $MA$ with the
$\nsC_{1}$-algebra structure inherited from its $\mtimes$-monoid structure
without some kind of rectification result (such as Example
Theorem~\ref{ethm:rect}) comparing the category of $L\nsCl$-algebras
with the category of $\nsAss$-algebras.

The argument in~\cite[2.5]{Smash} that identifies $\nU^{\nsC_{1}}A$ as
$DA$ generalizes to identify $\nU^{\nsCl}P_{Li}A$ as $NA$; the maps in
Theorem~\ref{thm:UNM} can then be viewed as the natural maps on
enveloping algebras induced by maps of operads and maps of algebras.

\section{\texorpdfstring{$E_{n}$}{En} spaces and Iterated Loop Space Theory}\label{sec:loop}

The recognition principle for iterated loop spaces provided the first
application for operads.  Although the summary here has been spiced up
with model category notions and terminology (in the adjoint functor
formulation of~\cite[\S8]{May-What}), the mathematics has not 
changed significantly from the original treatment by
May in~\cite{May-GILS}, except for the improvements noted in the
appendix to~\cite{CohenLadaMay}, which extend the results from
connected to grouplike $E_{n}$ spaces. ($E_{n}$ spaces $=$
$E_{n}$ algebras in spaces.)

The original idea for the little $n$-cubes operads $\oC_{n}$ and the
start of the relationship between $E_{n}$ spaces and $n$-fold
loop spaces is the Boardman-Vogt observation  that
every $n$-fold loop spaces comes with the natural structure of a
$\oC_{n}$-algebra.  The action map
\[
\oC_{n}(m)\times \Omega^{n}X \times \dotsb \times \Omega^{n}X\to
\Omega^{n} X
\]
is defined as follows.  We view $S^{n}$ as $[0,1]^{n}/\partial$. Given
an element $c\in \oC_{n}(m)$, and elements $f_{1},\dotsc,f_{m}\colon
S^{n}\to X$ of $\Omega^{n}X$, let $f_{c;f_{1},\dotsc,f_{n}}\colon
S^{n}\to X$ be the function that sends a point $x$ in $S^{n}$ to the
base point if $x$ is not in one of the embedded cubes; the $i$th
embedded cube gets sent to $X$ using the inverse of the embedding and the
quotient map $[0,1]^{n}\to S^{n}$ followed by the map $f_{i}\colon
S^{n}\to X$.  This is a continuous 
based map $S^{n}\to X$ since the boundary of each embedded cube gets
sent to the base point.  Phrased another way, $c$
defines a based map 
\[
S^{n}\to S^{n}\vee \dotsb \vee S^{n}
\]
with the $i$th embedded cube mapping to the $i$th wedge summand of
$S^{n}$ by collapsing all points not in an open cube to the base point
and rescaling; we then apply $f_{i}\colon S^{n}\to X$ to the $i$th
summand to get a composite map $S^{n}\to X$.  

The construction of the previous paragraph factors $\Omega^{n}$ as a
functor from based spaces to $\oC_{n}$-spaces ($=$ $\oC_{n}$-algebras
in spaces).  It is clear that not every $\oC_{n}$-space arises as
$\Omega^{n}X$ because $\pi_{0}\Omega^{n}X$ is a group (for its
canonical multiplication), whereas for the free $\oC_{n}$-space
$\bC_{n}X$,  $\pi_{0}\bC_{n}X$ is
not a group unless $X$ is the empty set; for example,
$\pi_{0}\bC_{n}X\iso \bN$ when $X$ is path connected.  We say that a
$\oC_{n}$-space $A$ is \term{grouplike} when $\pi_{0}A$ is a group
(for its canonical multiplication).  The following is the fundamental
theorem of iterated loop space theory; it gives an equivalence of homotopy
theories between $n$-fold loop spaces and grouplike $\oC_{n}$-spaces.

\begin{thm}[{May~\cite{May-GILS}, Boardman-Vogt~\cite[\S6]{BV-PROP}}]\label{thm:FTILST}
The functor $\Omega^{n}$ from based\break spaces to $\oC_{n}$-spaces is a
Quillen right adjoint.  The unit of the derived adjunction
\[
A\to \Omega^{n}B^{n}A
\]
is an isomorphism in the homotopy category of $\oC_{n}$-spaces if (and
only if) $A$ is grouplike.  The counit of the derived adjunction
\[
B^{n}\Omega^{n}X\to X
\]
is an isomorphism in the homotopy category of spaces if (and only if)
$X$ is $(n-1)$-connected; in general it is an $(n-1)$-connected cover.
\end{thm}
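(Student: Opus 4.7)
The plan is to construct $B^n$ as the monadic two-sided bar construction and to reduce both adjunction statements to the classical approximation theorem of May.

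First, I would set $B^n A := |B_\ssdot(\Sigma^n,\bC_n,A)|$, where $B_p(\Sigma^n,\bC_n,A) = \Sigma^n \bC_n^{p+1} A$ with the usual face and degeneracy operators using the monad structure on $\bC_n$ (Proposition~\ref{prop:monad}), the algebra action $\bC_n A \to A$, and the right $\bC_n$-action on $\Sigma^n$ adjoint to the canonical $\oC_n$-algebra structure on $\Omega^n\Sigma^n(-)$ from the opening paragraph of this section. A formal adjunction check combining Theorem~\ref{thm:georeal} with the ordinary suspension-loop adjunction on based spaces exhibits $B^n$ as left adjoint to $\Omega^n$ at the level of underlying morphism sets. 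Because $\Omega^n$ preserves fibrations and weak equivalences between fibrant based spaces, and because by Example Theorem~\ref{ethm:model}(b) these create the model structure on $\oC_n$-spaces, the adjunction is automatically Quillen.

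The heart of the argument, and the main obstacle, is the approximation theorem: for a connected based CW space $Y$, the canonical map $\alpha_n\colon \bC_n Y \to \Omega^n \Sigma^n Y$ adjoint to the $\oC_n$-action on $\Omega^n\Sigma^n Y$ is a weak equivalence. I would prove this by induction on $n$. The base case $n = 1$ identifies $\bC_1 Y$ with the James reduced product $JY$ and invokes James's classical theorem. For the inductive step, one constructs compatible quasifibrations modelled on the path-loop fibration for $\Sigma^{n+1} Y$, with fibers $\bC_n Y$ and $\Omega^n\Sigma^{n+1} Y$ respectively, compares them via $\alpha_n$ and $\alpha_{n+1}$, and concludes via the five lemma on the long exact sequences of homotopy groups.

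Given the approximation theorem, both adjunction assertions follow from standard bar-construction manipulations. Applying $\Omega^n$ degreewise to $B_\ssdot(\Sigma^n,\bC_n,A)$ yields a simplicial object which, via $\alpha_n$, is levelwise weakly equivalent to $B_\ssdot(\bC_n,\bC_n,A)$ as long as the relevant pieces remain connected; the latter admits an extra degeneracy and collapses to $A$, and grouplikeness of $A$ is precisely the condition that makes the residual group-completion obstruction vanish, yielding the unit assertion. For the counit, $B^n A$ is always $(n-1)$-connected since $\Sigma^n$ produces $(n-1)$-connected spaces and geometric realization preserves this; hence $B^n\Omega^n X \to X$ factors through the $(n-1)$-connected cover $X\langle n{-}1\rangle$, and the approximation theorem applied to the simplicial resolution shows that the resulting map $B^n\Omega^n X \to X\langle n{-}1\rangle$ is a weak equivalence. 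This delivers the general $(n-1)$-connected cover statement and, when $X$ is itself $(n-1)$-connected, the full equivalence.
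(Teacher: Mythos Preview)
Your overall strategy matches the paper's---two-sided bar construction plus the approximation theorem---but several steps do not go through as written. Most seriously, you use the unreduced monad $\bC_n$ throughout, whereas the approximation theorem requires the \emph{based} monad $\tbC_n$ (the pushforward along $\oIdp\to\oC_n$, so that the basepoint becomes the operadic unit): the statement $\bC_n Y\simeq\Omega^n\Sigma^n Y$ is false already for $Y=*$, where $\bC_n(*)=\coprod_m\oC_n(m)/\Sigma_m$. Relatedly, the bar construction is not the point-set left adjoint; the honest adjoint is the coequalizer of $\Sigma^n\tbC_n A\rightrightarrows\Sigma^n A$, and $B(\Sigma^n,\tbC_n,A)$ only models the \emph{derived} functor after one checks cofibrancy and uses the extra-degeneracy equivalence $B(\tbC_n,\tbC_n,A)\simeq A$. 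Your ``formal adjunction check'' via Theorem~\ref{thm:georeal} does not produce a point-set adjunction.

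Two further gaps. Your connected-case approximation theorem yields the unit statement only for connected $A$; the grouplike case requires the May--Cohen--Segal refinement that $\tbC_n X\to\Omega^n\Sigma^n X$ is a \emph{group completion} for every non-degenerately based $X$, and your phrase about the ``residual group-completion obstruction'' does not substitute for that theorem. And when you ``apply $\Omega^n$ degreewise'' you are silently invoking a nontrivial result the paper isolates explicitly: $\Omega^n$ commutes with geometric realization of proper simplicial spaces that are levelwise $(n-1)$-connected. This commutation is precisely what links $B(\Omega^n\Sigma^n,\tbC_n,A)$ to $\Omega^n B(\Sigma^n,\tbC_n,A)$ and cannot be skipped.
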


We have written the derived functor of the left adjoint in
Theorem~\ref{thm:FTILST} as $B^{n}$, suggesting an iterated bar
construction.  Although neither the point-set adjoint functor nor the
model for its derived functor used in the argument of
Theorem~\ref{thm:FTILST} is constructed iteratively,
Dunn~\cite{Dunn-Uniqueness} shows that the derived functor is
naturally equivalent to an iterated bar construction.

As a consequence of the statement of the theorem, the unit
of the derived adjunction $A\to \Omega^{n}B^{n}A$ is the initial map
in the homotopy category of $\oC_{n}$-spaces from $A$ to a grouplike
$\oC_{n}$-space and so deserves to be called ``group completion''.
Group completion has various characterizations and for the purposes of
sketching the ideas behind the proof of the theorem, it works best
to choose one of them as the definition and state the property of the
unit map as a theorem.  One such characterization uses the classifying
space construction, which we understand as the Eilenberg-Mac\ Lane bar
construction (after converting the underlying $\oC_{1}$-spaces to
topological monoids) or the Stasheff bar construction (choosing
compatible maps from the Stasheff associahedra into the spaces
$\oC_{n}(m)$). 

\begin{defn}\label{def:groupcompletion}
A map $f\colon A\to G$ of $\oC_{n}$-spaces is a \term{group completion} if $G$
is grouplike and $f$ induces a weak equivalence of classifying spaces.
\end{defn}

In the case $n>1$ (and under some hypotheses if $n=1$),
Quillen~\cite{Quillen-GroupCompletion} gives a homological criterion
for a map to be group completion: if $G$ is grouplike, then a map
$A\to G$ of $\oC_{n}$-spaces is group completion if and only if
\[
H_{*}(A)[(\pi_{0}A)^{-1}]\to H_{*}(G)
\]
is an isomorphism. Counterexamples exist in the case $n=1$ (indeed,
McDuff~\cite{McDuff-DiscreteMonoids} gives a counterexample for every
loop space homotopy type), but recent work of Braun, Chuang, and
Lazarev~\cite{BCL-DerivedLocalization} give an analogous derived
category criterion in terms of derived localization at the
multiplicative set $\pi_{0}A$.  Using
Definition~\ref{def:groupcompletion} or any equivalent independent
characterization of group completion, we have the following addendum
to Theorem~\ref{thm:FTILST}.

\begin{add}
The unit of the derived adjunction in Theorem~\ref{thm:FTILST} is
group completion.
\end{add}

The homotopical heart of the proof of Theorem~~\ref{thm:FTILST} is the
May-Cohen-Segal Approximation
(Theorem~\cite[\S6--7]{May-GILS}, \cite{Cohen-Bulletin},
\cite{Segal-ConfigurationSpaces}),
which we now review.  This theorem studies a version of the
free $\oC_{n}$-algebra functor $\tbC_{n}$ whose domain is the category
of based spaces, where the base point becomes the identity element in
the $\oC_{n}$-algebra structure.  This version of the free functor has
the advantage that for a connected space $X$, $\tbC X$ is also a connected space;
May's Approximation Theorem identifies $\tbC X$ in this case as a model for
$\Omega^{n}\Sigma^{n}X$.  Cohen (following conjectures of May) and
Segal (working independently) then extended this to non-connected
spaces: the group completion of $\tbC X$ is a model for
$\Omega^{n}\Sigma^{n}X$. 

For a based space $X$, $\tbC_{n}X$ is formed as a quotient of
\[
\bC X= \coprod \oC_{n}(m)\times_{\Sigma_{m}}X^{m}
\]
by the equivalence relation that identifies
$(c,(x_{1},\dotsc,x_{i},*,\dotsc,*))\in \oC_{n}(m)\times X^{m}$ with
$(c',(x_{1},\dotsc,x_{i}))\in \oC_{n}(i)\times X^{i}$ for
$c'=\Gamma(c;1,\dotsc,1,0,\dotsc,0)$ where $1$ denotes the identity
element in $\oC_{n}(1)$ and $0$ denotes the unique element in
$\oC_{n}(0)$.  This is actually an instance of the operad
pushforward construction: let $\oIdp$ be the operad with
$\oIdp(0)=\oIdp(1)=*$ and $\oIdp(m)=\emptyset$ for $m>1$.  The functor
associated to $\oIdp$ is the functor $(-)_{+}$ that adds a disjoint
base point with the monad structure $((-)_{+})_{+}\to (-)_{+}$ that
identifies the two disjoint base points; the category of algebras for
this monad is the category of based spaces.  The functor $\tbC_{n}$
from based spaces to $\oC_{n}$-algebras is the pushforward $P_{f}$ for
$f$ the unique map of operads $\oIdp\to \oC_{n}$: formally $P_{f}$ is the
coequalizer described in Section~\ref{sec:compare}, that in this case
takes the form
\[
\xymatrix@-.75pc{%
\bC_{n}(X_{+})\ar@<.5ex>[r]\ar@<-.5ex>[r]&\bC_{n}X\ar[r]&\tbC_{n}X.
}
\]
As mentioned in an aside in that section (or as can be seen concretely
here using the operad multiplication on $\oC_{n}$ directly), the
endofunctor $\tbC_{n}$ on based spaces (i.e., $U_{f}P_{f}$) has the
structure of a monad, and we can identify the category of
$\oC_{n}$-spaces as the category of algebras over the
monad~$\tbC_{n}$.

The factorization of the functor $\Omega^{n}$ through
$\oC_{n}$-spaces has the formal consequence of producing a map of
monads (in based spaces)
\[
\tbC_{n}\to \Omega^{n}\Sigma^{n}.
\]
Formally the map is induced by the composite 
\[
\tbC_{n}X\overto{\tbC_{n}\eta}\tbC_{n}\Omega^{n}\Sigma^{n}X\overto{\xi} \Omega^{n}\Sigma^{n}X,
\]
where $\eta$ is the unit of the $\Sigma^{n},\Omega^{n}$-adjunction and
$\xi$ is the $\oC_{n}$-action map.  This map has the following
concrete description: an element $(c,(x_{1},\dotsc,x_{m}))\in
\oC_{n}(m)\times X^{m}$ maps to the element $\gamma \colon S^{n}\to
\Sigma^{n}X$ of $\Omega^{n}\Sigma^{n}X$ given by the composite of the
map 
\[
S^{n}\to S^{n}\vee\dotsb \vee S^{n}
\]
associated to $c$ (as described above) and the map 
\[
S^{n}\iso \Sigma^{n} \{x_{i}\}_{+}\subset \Sigma^{n}X
\]
on the $i$th factor of $S^{n}$.  Either using this concrete
description, or following diagrams in a formal categorical argument,
it is straightforward to check that this defines a map of monads. 
We can now state the May-Cohen-Segal Approximation Theorem.

\begin{thm}[{May-Cohen-Segal Approximation Theorem~\cite[6.1]{May-GILS},
\cite[3.3]{Cohen-Bulletin}, \cite[Theorem~2]{Segal-ConfigurationSpaces}}]\label{thm:MA}
\noindent \par\noindent 
For any non-degenerately based space $X$, the map of $\oC_{n}$-spaces
$\tbC_{n}X\to \Omega^{n}\Sigma^{n}X$ is group completion.
\end{thm}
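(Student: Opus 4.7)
The plan is a two-stage argument: first establish the theorem for path-connected $X$, where it takes the stronger form that $\tbC_n X \to \Omega^n\Sigma^n X$ is a weak equivalence (since $\tbC_n X$ is then automatically grouplike); second, bootstrap to the general non-connected case using a homological characterization of group completion.

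For the connected case, I would filter $\tbC_n X$ by word length, setting $F_k\tbC_n X$ to be the image of $\coprod_{m\le k}\oC_n(m)\times_{\Sigma_m}X^m$ in $\tbC_n X$. The non-degenerate basepoint hypothesis makes the inclusions $F_{k-1}\hookrightarrow F_k$ into Hurewicz cofibrations and identifies the successive cofibers as
\[
F_k\tbC_n X / F_{k-1}\tbC_n X \;\cong\; \oC_n(k)_+ \wedge_{\Sigma_k} X^{\wedge k}.
\]
There is a compatible filtration on $\Omega^n\Sigma^n X$ coming from labeled configurations in $[0,1]^n$ via the scanning/collapse construction, and the map in the theorem is filtration-preserving and induces the expected equivalence on each subquotient (using the $\Sigma_k$-equivariant equivalence between $\oC_n(k)$ and the configuration space $C(k,\bR^n)$ recalled in Section~\ref{sec:ainf}). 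A five-lemma induction followed by passage to the colimit (harmless because the filtration inclusions are Hurewicz cofibrations) then yields the weak equivalence. Alternatively, one can appeal to the Snaith splitting: after one suspension both spaces decompose as $\bigvee_k \Sigma(\oC_n(k)_+ \wedge_{\Sigma_k} X^{\wedge k})$ with the map realizing the obvious term-by-term identification, yielding an integral homology equivalence of simple spaces.

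For the general case with $n\ge 2$, I would apply Quillen's homological characterization of group completion for homotopy commutative $H$-spaces: since $\Omega^n\Sigma^n X$ is automatically grouplike, it suffices to show that the induced map
\[
H_*(\tbC_n X;\bZ)[\pi_0(\tbC_n X)^{-1}] \to H_*(\Omega^n\Sigma^n X;\bZ)
\]
is an isomorphism. The left side can be computed from the word-length filtration above via a spectral sequence, and the right side by the Cohen calculation~\cite{CohenLadaMay} using the iterated fibration structure of the configuration spaces. Both computations express the answer in terms of the same homogeneous pieces $\oC_n(k)_+ \wedge_{\Sigma_k} X^{\wedge k}$, and a comparison of the two constructions shows the map realizes this identification after inverting $\pi_0$. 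The case $n=1$ requires a more delicate argument — one can rectify to the Moore algebra of Section~\ref{sec:rectass} and apply the derived-localization approach of~\cite{BCL-DerivedLocalization}.

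The main obstacle is the identification of $\Omega^n\Sigma^n X$ (or, in the disconnected case, its homology) with a model whose natural filtration matches that of $\tbC_n X$ on the nose. Constructing the scanning/collapse map and verifying its compatibility with the operad action on each stratum is technically demanding, as is the identification of the subquotients; this is where the classical papers of May~\cite{May-GILS}, Cohen~\cite{Cohen-Bulletin}, and Segal~\cite{Segal-ConfigurationSpaces} each do substantial work. A secondary delicate point in the disconnected case is the treatment of the localization at $\pi_0$, which requires careful homological bookkeeping on the filtration spectral sequence to ensure that inverting $\pi_0$ on the source accounts precisely for the additional homology coming from the non-trivial components of $\Omega^n\Sigma^n X$.
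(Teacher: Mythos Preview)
The paper does not contain its own proof of this theorem: it is stated with citations to May, Cohen, and Segal and then used as a black box in the sketch proof of Theorem~\ref{thm:FTILST}. So there is no argument in the paper to compare against; I can only comment on your outline relative to the cited literature.

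Your two-stage structure (connected case first, then bootstrap via a homological group-completion criterion) matches the historical development: May's original theorem in \cite{May-GILS} is the connected statement, and the extension to group completion is due to Cohen and Segal. That much is right.

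Where your outline is thin is exactly where the real content lies. The assertion that there is a ``compatible filtration on $\Omega^{n}\Sigma^{n}X$'' whose associated graded matches that of $\tbC_{n}X$ is not something one can simply write down on the loop space; producing such a comparison is essentially equivalent to the theorem itself. May's actual proof in \cite[\S6--7]{May-GILS} does not proceed by a direct filtration comparison but rather through an elaborate inductive quasi-fibration argument, and Segal's scanning approach in \cite{Segal-ConfigurationSpaces} likewise requires substantial geometric input before any filtration comparison becomes available. Your alternative appeal to the Snaith splitting is circular: that splitting is normally deduced \emph{from} the Approximation Theorem, not the other way around. Finally, for the disconnected case you correctly point to the homological criterion, but the verification that the map realizes the claimed isomorphism after localization is again the hard calculation of \cite{Cohen-Bulletin,CohenLadaMay}, not something that falls out of the filtration bookkeeping you describe. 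In short, your proposal identifies the right shape of the argument and the right difficulties, but the steps you mark as ``technically demanding'' are the entire proof, and nothing in the outline indicates how to carry them out.
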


(``Non-degenerately based'' means that the inclusion of the base point is
a cofibration.  Both $\tbC_{n}$ and $\Omega^{n}\Sigma^{n}$ preserve
weak equivalences in non-degenerately based spaces, but for other
spaces, either or both may have the wrong weak homotopy type.)

From here a sketch of the proof of Theorem~\ref{thm:FTILST} goes as
follows.  Since $\Omega^{n}$ as a functor from based spaces to based
spaces has left adjoint $\Sigma^{n}$, a check of universal properties
shows that the functor from $\oC_{n}$-spaces to based spaces defined
by the coequalizer
\[
\xymatrix@C-1pc{%
\Sigma^{n}\tbC_{n}A\ar@<-.5ex>[r]\ar@<.5ex>[r]
&\Sigma^{n}A\ar[r]&\Ln A
}
\]
is the left adjoint to $\Omega^{n}$ viewed as a functor from based spaces to
$\oC_{n}$-spaces. (In the coequalizer, one map is induced by the $\oC_{n}$-action map
on $A$ and the other is adjoint to the map of monads $\tbC\to
\Omega^{n}\Sigma^{n}$.)  Because $\Omega^{n}$ preserves fibrations and
weak equivalences, this is a Quillen adjunction.

The main tool to study the $\Ln(-),\Omega^{n}$-adjunction is the
two-sided monadic bar construction, invented in~\cite[\S9]{May-GILS} for
this purpose.  Given a monad $\bT$ and a right action of $\bT$ on a
functor $F$  (say, to based spaces), the two-sided monadic bar
construction is the functor on $\bT$-algebras $B(F,\bT,-)$ defined as
the geometric realization of the simplicial object 
\[
B_{m}(F,\bT,A)=F\underbrace{\bT\dotsb \bT}_{m}A
\]
with face maps induced by the action map $F\bT\to F$, the
multiplication map $\bT\bT\to\bT$ and the action map $\bT A\to A$, and
degeneracy maps induced by the unit map $\Id\to \bT$.  In the case
when $F=\bT$, the simplicial object $B\subdot(\bT,\bT,A)$ has an extra
degeneracy and the map from $B\subdot(\bT,\bT,A)$ to the constant
simplicial object on $A$ is a simplicial homotopy equivalence (in the
underlying category for $\bT$, though not generally in the category of
$\bT$-algebras). 

Because geometric realization commutes with colimits and finite
cartesian products, we have a canonical isomorphism
\[
\tbC_{n}B(\tbC_{n},\tbC_{n},A)\to 
B(\tbC_{n}\tbC_{n},\tbC_{n},A)
\]
and the multiplication map $\tbC_{n}\tbC_{n}\to \tbC_{n}$ then gives
$B(\tbC_{n},\tbC_{n},A)$ the natural structure of a $\oC_{n}$-algebra.
(See Section~\ref{sec:enrich} for a more general discussion.)
For the same reason, the canonical map
\[
\Ln B(\tbC_{n},\tbC_{n},A)\to B(\Ln\tbC_{n},\tbC_{n},A)=B(\Sigma^{n},\tbC_{n},A)
\]
is an isomorphism.  The latter functor clearly\footnote{At the time
when May wrote the argument, this was far from clear: some of the
first observations about when geometric realization of simplicial
spaces preserves levelwise weak equivalences were developed
in~\cite[\S11]{May-GILS} precisely for this argument.} preserves weak
equivalences of $\oC_{n}$-spaces $A$ whose underlying based spaces are
non-degenerately based.  (In addition to being a hypothesis of
May-Cohen-Segal Approximation Theorem, non-degenerately based here also
ensures that the inclusion of the degenerate subspace (or latching
object) is a cofibration.) As a consequence of
Theorem~\ref{thm:georeal} it follows that when the underlying based
space of $A$ is cofibrant (which is the case in particular when $A$ is
cofibrant as a $\oC_{n}$-space), then $B(\tbC_{n},\tbC_{n},A)$ is a
cofibrant $\oC_{n}$-space.  Because $\Ln(-)$ is a Quillen left
adjoint, it preserves weak equivalences between cofibrant objects, and
looking at a cofibrant approximation $A'\overto{\sim}A$, we see from
the weak equivalences
\[
B(\Sigma^{n},\tbC_{n},A)\overfrom{\sim}B(\Sigma^{n},\tbC_{n},A')\iso 
\Ln B(\tbC_{n},\tbC_{n},A')\overto{\sim}
\Ln A'
\]
that $B(\Sigma^{n},\tbC_{n},A)$ models the derived functor $B^{n}A$ of
$\Ln(-)$ whenever $A$ is non-degenerately based.

To complete the argument, we need the theorem of~\cite[\S12]{May-GILS}
that $\Omega^{n}$ commutes up to weak equivalence with geometric
realization of (proper) simplicial spaces that are $(n-1)$-connected
in each level.  Then for $A$ non-degenerately based, we have that the
vertical maps are weak equivalences of $\oC_{n}$-spaces
\[
\xymatrix{%
B(\tbC_{n},\tbC_{n},A)\ar[r]\ar[d]
&B(\Omega^{n}\Sigma^{n},\tbC_{n},A)\ar[d]\\
A&\Omega^{n}B(\Sigma^{n},\tbC_{n},A)
}
\]
while by the May-Cohen-Segal Approximation Theorem, the horizontal map is
group completion.  This proves that the unit of the derived adjunction
is group completion.

For the counit of the derived adjunction, we have from the
model above that $B^{n}$ is always $(n-1)$-connected and the unit
\[
\Omega^{n}X\to \Omega^{n}B^{n}\Omega^{n}X
\]
on $\Omega^{n}X$ is a weak equivalence.  Looking at $\Omega^{n}$ of the
counit,
\[
\Omega^{n}B^{n}\Omega^{n}X\to \Omega^{n}X,
\]
the composite with the unit is the identity on $\Omega^{n}X$, and so
it follows that $\Omega^{n}$ of the counit is a weak equivalence.
Thus, the counit of the derived adjunction is an $(n-1)$-connected
cover map.

\section{\texorpdfstring{$E_{\infty}$}{Einfty} Algebras in Rational and \texorpdfstring{$p$}{p}-Adic Homotopy Theory}\label{sec:cochains}

In the 1960's and 1970's, Quillen~\cite{Quillen-RHT} and
Sullivan~\cite{Sullivan73,Sullivan-ICT} showed that the rational
homotopy theory of simply connected spaces (or simplicial sets) has an
algebraic model in terms of rational differential graded commutative
algebras or coalgebras.  In the 1990's, the author proved a mostly
analogous theorem relating $E_{\infty}$ differential graded algebras
and $p$-adic homotopy theory and a bit later some results for using
$E_{\infty}$ differential graded algebras or $E_{\infty}$ ring spectra
to identify integral homotopy types.  In this section, we summarize
this theory following mostly the memoir of
Bousfield-Gugenheim~\cite{BG}, and the
papers~\cite{Einfty}\footnote{In the published version, in addition to
several other unauthorized changes, the copy editors changed the
typefaces with the result that the same symbols are used for multiple
different objects or concepts; the preprint version available at the
author's home page
\url{https://pages.iu.edu/~mmandell/papers/einffinal.pdf} does not
have these changes and should be much more readable.}
and~\cite{Integral}.  In what follows $k$ denotes a commutative ring,
which is often further restricted to be a field.

In both the rational commutative differential graded algebra case and
the $E_{\infty}$ $k$-algebra case, the theory simplifies by working
with simplicial sets instead of spaces, and the functor is some
variant of the cochain complex.  Sullivan's approach to rational
homotopy theory uses a rational version of the de Rham complex,
originally due to Thom (unpublished), consisting of forms that are
polynomial on simplices and piecewise matched on faces: 

\begin{defn}
The algebra 
$\nabla^{*}[n]$ of polynomial forms on the standard simplex $\Delta[n]$ is the rational
commutative differential graded algebra free on generators
$t_{0},\dotsc,t_{n}$ (of degree zero), $dt_{0},\dotsc,dt_{n}$ (of
degree one) subject to the relations $t_{0}+\dotsb+t_{n}=1$ and
$dt_{0}+\dotsb+dt_{n}=0$ (as well as the differential relation
implicit in the notation). 
\end{defn}

Viewing $t_{0},\dotsc,t_{n}$ as the
barycentric coordinate functions on $\Delta[n]$ determines their
behavior under face and degeneracy maps, making $\nabla^{*}[\bullet]$
a simplicial rational commutative differential graded algebra.  

\begin{defn}
For a
simplicial set $X$, the rational de Rham complex $A^{*}(X)$ is the
rational graded commutative algebra of maps of simplicial sets from
$X$ to $\nabla^{*}[\bullet]$, or equivalently, the end over the
simplex category
\[
A^{*}(X):=\DDelta^{\op}\Set(X,\nabla^{*}[\bullet])=
\int_{\DDelta^{\op}}\Set(X_{n},\nabla^{*}[n])=
\int_{\DDelta^{\op}} \,\prod_{X_{n}}\nabla^{*}[n]
\]
(the last formula indicating how to regard $A^{*}(X)$ as a rational
commutative differential graded algebra).  
\end{defn}

More concretely,
$A^{*}(X)$ is the rational commutative differential graded algebra where an
element of degree $q$ consists of a choice of element of
$\nabla^{q}[n]$ for each non-degenerate $n$-simplex of $X$ (for all
$n$) which agree under restriction by face maps, with multiplication
and differential done on each simplex. (When $X$ is a finite
simplicial complex $A^{*}(X)$ also has a Stanley-Reisner ring style
description; see \cite[G.{i})]{Sullivan73}.)  The
simplicial differential graded $\bQ$-module $\nabla^{q}[n]$ is a
contractible Kan complex for each fixed $q$ (``the
extension lemma''~\cite[1.1]{BG}) and is acylic in the sense that the inclusion of
the unit $\bQ\to \nabla^{*}[n]$ is a chain homotopy equivalence for
each fixed $n$ (``the Poincar\'e lemma''~\cite[1.3]{BG}).  These
formal properties imply that the cohomology of $A^{*}(X)$ is
canonically naturally isomorphic to $H^{*}(X;\bQ)$, the rational
cohomology of $X$ (even uniquely naturally isomorphic, relative to the
canonical isomorphism $\bQ\iso A^{*}(\Delta[0])$).  The canonical
isomorphism can be realized as a chain map to the normalized cochain
complex $C^{*}(X;\bQ)$ defined in terms of integrating differential
forms; see~\cite[1.4,2.1,2.2]{BG}.

In the $p$-adic case, we can use the normalized cochain complex
$C^{*}(X;k)$ directly as it is naturally an $E_{\infty}$ $k$-algebra.
In the discussion below, we use the
$E_{\infty}$ $k$-algebra structure constructed by
Berger-Fresse~\cite[\S2.2]{BergerFresse-Combinatorial} for the
Barratt-Eccles operad $\oE$ (the normalized chains of the Barratt-Eccles
operad of categories or simplicial sets described in
Example~\ref{ex:be}).  Hinich-Schechtmann~\cite{HinichSchechtman}
and (independently) Smirnov~\cite{Smirnov} appear to be the first to
explicitly describe a natural operadic algebra structure on cochains;
McClure-Smith~\cite{McClureSmith-MultivariableCubes} describes a
natural $E_{\infty}$ structure that generalizes classical $\cup_{i}$
product and bracket operations.  The ``cochain theory'' theory
of~\cite{Cochains} shows that all these structures are equivalent in
the sense that they give naturally quasi-isomorphic functors into a
common category of $E_{\infty}$ $k$-algebras, as does the polynomial
de Rham complex functor $A^{*}$ when $k=\bQ$.

Both $A^{*}(X)$ and $C^{*}(X;k)$ fit into adjunctions of the
contravariant type that send colimits to limits.  Concretely, for a
rational commutative differential graded algebra $A$ and an
$E_{\infty}$ $k$-algebra $E$, define simplicial sets by the formulas 
\[
T(A):=\aC_{\bQ}(A,\nabla^{*}[\bullet]), \qquad
U(E):=\aE_{k}(E,C^{*}(\Delta[\bullet])),
\] 
where $\aC_{\bQ}$ denotes the category of rational commutative
differential graded algebras and $\aE_{k}$ denotes the category of
$E_{\infty}$ $k$-algebras (over the Barratt-Eccles operad). An easy
formal argument shows that 
\[
A^{*}\colon \DDelta^{\op}\Set \tofrom \aC_{\bQ}^{\op}\noloc T,
\qquad
C^{*}\colon \DDelta^{\op}\Set \tofrom \aE_{k}^{\op}\noloc U,
\]
are adjunctions.  As discussed in Section~\ref{sec:model}, both
$\aC_{\bQ}$ and $\aE_{k}$ have closed model structures with weak
equivalences the quasi-isomorphisms and fibrations the surjections.
Because both $A^{*}$ and $C^{*}$ preserve homology isomorphisms and
convert injections to surjections, these are Quillen adjunctions.  The
main theorems of \cite{BG} and \cite{Einfty} then identify
subcategories of the homotopy categories on which the adjunction
restricts to an equivalence.

Before stating the theorems, first recall the $H_{*}(-;k)$-local model
structure on simplicial sets: this has cofibrations the inclusions and
weak equivalences the $H_{*}(-;k)$ homology isomorphisms.  When $k$ is
a field, the weak equivalences depend only on the characteristic, and
we also call this the \term{rational model structure} (in the case of
characteristic zero) or the \term{$p$-adic model structure} (in the
case of characteristic $p>0$); we call the associated homotopy
categories, the \term{rational homotopy category} and \term{$p$-adic
homotopy category}, respectively.  As with any localization, the
local homotopy category is the homotopy category of local objects
(that is to say, the fibrant objects): in the case of 
rational homotopy theory, the local objects are the Kan complexes of
the homotopy type of rational spaces.  In 
$p$-adic homotopy theory, the local objects are the Kan complexes that
satisfy a $p$-completeness property described explicitly
in~\cite[\S5,7--8]{Bousfield-LocSpace}.

We say that a simplicial set $X$ is
\term{finite $H_{*}(-;k)$-type} (or \term{finite rational type} when
$k$ is a field of characteristic zero or \term{finite $p$-type} when
$k$ is a field of characteristic $p>0$) when $H_{*}(X;k)$ is finitely
generated over $k$ in each degree (or, equivalently if $k$ is a field,
when $H^{*}(X;k)$ is finite dimensional in each degree).  Similarly a
rational commutative differential graded algebra or $E_{\infty}$
$k$-algebra $A$ is \term{finite type} when its homology is finitely
generated over $k$ in each degree.  It is \term{simply connected} when
the inclusion of the unit induces an isomorphism $k\to H^{0}(A)$,
$H^{1}(A)\iso 0$, and $H^{n}(A)\iso 0$ for $n<0$ (with the usual
cohomological grading convention that $H^{n}(A):=H_{-n}(A)$).  With
this terminology, the main theorem of \cite{BG} is the following:

\begin{thm}[{\cite[Section~8, Theorem~9.4]{BG}}]\label{thm:rht}
The polynomial de Rham complex functor, $A^{*}\colon \DDelta^{\op}\Set
\to \aC_{\bQ}^{\op}$, is a left Quillen adjoint for the rational model
structure on simplicial sets.  The left derived functor restricts to
an equivalence of the full subcategory of the rational homotopy
category consisting of the simply connected simplicial sets of finite
rational type and the full subcategory of the homotopy category of
rational commutative differential graded algebras consisting of the
simply connected rational commutative differential graded algebras of
finite type.
\end{thm}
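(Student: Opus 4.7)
The plan is to split the proof into two parts: (1) establishing the Quillen adjunction, and (2) reducing the asserted equivalence on homotopy subcategories to a base-case computation on Eilenberg--Mac~Lane objects, via induction along Postnikov towers on one side and Sullivan minimal models on the other.

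First, to see that $A^{*}\colon \DDelta^{\op}\Set\to \aC_{\bQ}^{\op}$ is a left Quillen functor for the rational model structure, I would invoke the two properties of the simplicial CDGA $\nabla^{*}[\bullet]$ cited from \cite{BG}. The extension lemma (that each $\nabla^{q}[\bullet]$ is a Kan complex) implies that for every monomorphism $X\hookrightarrow Y$ the restriction $A^{*}(Y)\to A^{*}(X)$ is surjective, i.e., a fibration in $\aC_{\bQ}$; thus $A^{*}$ sends cofibrations in $\DDelta^{\op}\Set$ to cofibrations in $\aC_{\bQ}^{\op}$. The Poincar\'e lemma, packaged via an acyclic models argument, provides a natural quasi-isomorphism $A^{*}(X)\to C^{*}(X;\bQ)$, so $A^{*}$ converts $H_{*}(-;\bQ)$-isomorphisms (the weak equivalences of the rational model structure) to quasi-isomorphisms; in particular acyclic cofibrations are sent to acyclic cofibrations in $\aC_{\bQ}^{\op}$.

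Second, the equivalence of homotopy subcategories reduces to showing that the unit $\eta_{X}\colon X\to TA^{*}(X)$ is a rational equivalence for simply connected $X$ of finite rational type, and that the counit $\epsilon_{A}\colon A\to A^{*}T(A)$ (viewed as a morphism in $\aC_{\bQ}$) is a quasi-isomorphism for simply connected $A$ of finite type. I would prove both statements by induction: the former through the rational Postnikov tower of $X$, whose principal stages are classified by classes in $H^{n+1}(-;\bQ)$, and the latter through a Sullivan (minimal) KS-filtration of $A$, whose stages are Hirsch extensions by a single generator $\Lambda(x_{n})$. The five lemma applied to the fibration sequence of each Postnikov stage and to its CDGA counterpart propagates the equivalence from one stage to the next, so the two checks reduce to verifying the statements on $K(\bQ,n)$ and on $\Lambda(x_{n})$ respectively. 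The finite type and simple connectivity hypotheses ensure nilpotence and convergence of both towers in the localized homotopy categories.

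The main obstacle, and the heart of the argument, is the base case: I must construct an explicit generator $x_{n}\in A^{n}(K(\bQ,n))$ representing the fundamental rational cohomology class and verify that the resulting CDGA map $\Lambda(x_{n})\to A^{*}(K(\bQ,n))$ is a quasi-isomorphism, and dually that $T(\Lambda(x_{n}))$ realizes the rational homotopy type of $K(\bQ,n)$. This is a concrete calculation using integration of polynomial forms on simplices, driven inductively by the Serre spectral sequence of the path-loop fibration $K(\bQ,n-1)\to PK(\bQ,n)\to K(\bQ,n)$ (with base case $K(\bQ,1)\homeq S^{1}_{\bQ}$ handled directly). With this base case established, the Postnikov/KS induction of the previous paragraph completes the proof.
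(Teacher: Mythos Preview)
Your overall architecture matches the paper's: verify the Quillen adjunction from the extension and Poincar\'e lemmas, then prove the derived unit is a rational equivalence by climbing the Postnikov tower from the base case $K(\bQ,n)$, and identify the essential image via minimal models. That is exactly the route taken in the paper (and in \cite{BG}).

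There is, however, a genuine gap in your inductive step. You write that ``the five lemma applied to the fibration sequence of each Postnikov stage and to its CDGA counterpart propagates the equivalence from one stage to the next.'' But $A^{*}$ is a \emph{left} Quillen functor to $\aC_{\bQ}^{\op}$, so formally it carries homotopy \emph{pushouts} of simplicial sets to homotopy pushouts of $\aC_{\bQ}^{\op}$ (i.e., homotopy pullbacks in $\aC_{\bQ}$). The Postnikov stage $X_{n+1}\to X_{n}\to K(\pi,n+2)$ is a homotopy \emph{pullback} in simplicial sets, and you need $A^{*}$ to send it to a homotopy \emph{pushout} in $\aC_{\bQ}$; otherwise there is no ``CDGA counterpart'' to which a five-lemma argument applies. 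This is exactly the Eilenberg--Moore theorem (Proposition~\ref{prop:em} in the paper), and the paper explicitly flags that it ``is not a formal consequence of the Quillen adjunction.'' You need to state and invoke it as a separate ingredient; without it the Postnikov induction does not get off the ground.

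A secondary point: you call the base case for $K(\bQ,n)$ ``the main obstacle and the heart of the argument.'' In the rational setting this is actually the easy part---$H^{*}(K(\bQ,n);\bQ)$ is free on one generator, so the comparison with $\Lambda(x_{n})$ is immediate (Proposition~\ref{prop:rems}). The substantive content in the rational case is the Eilenberg--Moore step you elided. (Your instinct is correct for the $p$-adic analogue, where the base case is indeed the hard calculation.)
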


For the $p$-adic version below, we need to take into account Steenrod
operations.  For $k=\bF_{p}$, the Steenrod operations arise from the
coherent homotopy commutativity of the $p$-fold multiplication, which
is precisely encoded in the action of the $E_{\infty}$ operad.  Specifically, the
$p$th complex $\oE(p)$ of the operad is a $k[\Sigma_{p}]$-free resolution of
$k$, and by neglect of structure, we can regard it as a $k[C_{p}]$-free resolution of
$k$ where $C_{p}$ denotes the cyclic group of order $p$.  The operad
action induces a map
\[
\oE(p)\otimes_{k[C_{p}]}(C^{*}(X;k))^{(p)}\to 
\oE(p)\otimes_{k[\Sigma _{p}]}(C^{*}(X;k))^{(p)}\to C^{*}(X;k).
\]
The homology of $\oE(p)\otimes_{k[C_{p}]}(C^{*}(X;k))^{(p)}$ is a
functor of the homology of $C^{*}(X;k)$ and the Steenrod operations
$P^{s}$ are precisely the image of certain classes under this map;
see, for example, \cite[2.2]{May-Steenrod}.  This process works for
any $E_{\infty}$ $k$-algebra, not just the cochains on spaces, to
give natural operations on the homology of $\oE$-algebras, usually
called Dyer-Lashoff operations.  The numbering conventions for the
Dyer-Lashoff operations are the opposite of those of the Steenrod
operations: on the cohomology of $C^{*}(X;\bF_{p})$, the Dyer-Lashoff
operation $Q^{s}$ performs the Steenrod operation $P^{-s}$.  If 
$k$ is characteristic $p$ but not $\bF_{p}$, the operations
constructed this way 
are $\bF_{p}$-linear but satisfy $Q^{s}(ax)=\phi(a)Q^{s}(x)$ for $a\in
k$, where $\phi$ denotes the Frobenius automorphism of $k$.  

The $\bF_{p}$ cochain algebra of a space has the special property that
the Steenrod operation $P^{0}=Q^{0}$ is the identity operation on its
cohomology; this is not true of the zeroth Dyer-Lashof operation in
general. Indeed for a commutative $\bF_{p}$-algebra regarded as
$E_{\infty}$ $\bF_{p}$-algebra, $Q^{0}$ is the Frobenius.  (The fact
that $Q^{0}$ is the identity for the $\bF_{p}$-cochain algebra of a
space is related to the fact that it comes from a cosimplicial
$\bF_{p}$-algebra where the Frobenius in each degree is the identity.)
So when $X$ is finite $p$-type, $C^{*}(X;k)$ in each degree has a
basis that is fixed by $Q^{0}$.  We say that a finite type
$E_{\infty}$ $k$-algebra is \term{spacelike} when in each degree its
homology has a basis that is fixed by $Q^{0}$.  The main theorem
of~\cite{Einfty} is the following:

\begin{thm}[{\cite[Main Theorem, Theorem~A.1]{Einfty}}]\label{thm:pht}
The cochain complex with coefficients in $k$, $C^{*}(-;k)\colon
\DDelta^{\op}\Set \to \aE_{k}^{\op}$, is a left Quillen adjoint for
the $H_{*}(-;k)$-local model structure on simplicial sets.  If $k=\bQ$
or $k$ is characteristic $p$ and $1-\phi$ is surjective on $k$, then
the left derived functor restricts to an equivalence of the full
subcategory of the $H_{*}(-;k)$-local homotopy category consisting of
the simply connected simplicial sets of finite $H_{*}(-;k)$-type and
the full subcategory of the homotopy category of $E_{\infty}$
$k$-algebras consisting of the spacelike simply connected $E_{\infty}$
$k$-algebras of finite type.  
\end{thm}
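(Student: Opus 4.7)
The plan is to verify the Quillen adjunction properties first and then reduce the equivalence statement to a computation on Eilenberg--Mac~Lane spaces via a Postnikov tower induction.

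First, to show $C^*(-;k)$ is a left Quillen adjoint for the $H_*(-;k)$-local model structure, I would check the two conditions directly against the model structures: the cochain functor $C^*(-;k)$ converts injections of simplicial sets to surjections of normalized cochain complexes (hence to fibrations of $\aE_k^{\op}$), and it converts $H_*(-;k)$-equivalences to quasi-isomorphisms of $k$-cochain complexes (and hence of $E_\infty$ $k$-algebras, since weak equivalences in $\aE_k$ are quasi-isomorphisms by Example Theorem~\ref{ethm:model}.(c)). Because the $H_*(-;k)$-local model structure has cofibrations the inclusions, this suffices. Alternately, one can check that the right adjoint $U$ preserves fibrations and acyclic fibrations, using that surjections of $\aE_k$ are sent to Kan fibrations of simplicial sets.

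For the equivalence part, it suffices (since cofibrant replacements exist on both sides and the derived unit/counit detect equivalences of homotopy categories on full subcategories) to verify that on the indicated full subcategories, both the derived unit $X\to U(C^*(X;k))$ is an $H_*(-;k)$-equivalence and the derived counit $C^*(U(E);k)\to E$ is a quasi-isomorphism. The argument proceeds by induction on the Postnikov tower. The base case is to analyze $X=K(V,n)$ where $V$ is a finite-dimensional $k$-vector space (in characteristic zero, take $V$ a finite-dimensional $\bQ$-space; in characteristic $p$, first treat $V=\bF_p^r$ and then pass to general finite-dimensional $k$-spaces using that $1-\phi$ is surjective on $k$, which allows one to realize $K(k,n)$ from $K(\bF_p,n)$). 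The key homological input is a computation of $H^*(C^*(K(V,n);k))$ as an algebra with Dyer--Lashof operations: it is the free graded-commutative algebra with Steenrod operations on a copy of $V^\vee$ in degree $n$, and $U$ applied to the corresponding free spacelike $E_\infty$ $k$-algebra (with $Q^0$ acting as the Frobenius on coefficients but the identity on the generator) recovers $K(V,n)$ up to $H_*(-;k)$-equivalence. The spacelike condition is what ensures this matching.

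For the inductive step, a simply connected finite $H_*(-;k)$-type simplicial set $X$ admits a principal Postnikov tower with fibers $K(V_n,n)$ where $V_n$ is finite-dimensional over $k$ after $H_*(-;k)$-localization; since $C^*$ sends homotopy pullbacks of this form to homotopy pushouts in $\aE_k$ (by the fibration $C^*(X_n;k)\to C^*(X_{n-1};k)$ being a quasi-isomorphic model of the pushout along the $k$-invariant, which follows from the Eilenberg--Moore spectral sequence convergence for simply connected base of finite type), induction propagates the equivalence up the tower, and passage to the homotopy inverse limit completes the argument for the unit. For the counit and essential surjectivity, one builds an algebraic Postnikov tower for a spacelike simply connected finite-type $E_\infty$ $k$-algebra $E$ by successively killing homology above each degree using free spacelike $E_\infty$ algebras of the form computed above; the spacelike condition ensures at each stage that the homology is of the form that can be realized as $C^*(K(V_n,n);k)$ (i.e., that $Q^0$ acts as the identity, which is needed to match topological Eilenberg--Mac~Lane behavior), and the surjectivity of $1-\phi$ is used to solve the equations for $k$-invariants algebraically in a way compatible with the topological $k$-invariants.

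The main obstacle will be the Postnikov step: showing that (i) $C^*$ converts the homotopy pullback square defining a Postnikov stage into a homotopy pushout in $\aE_k$, and (ii) every spacelike algebraic $k$-invariant in $\aE_k$ is realized by a topological $k$-invariant. Step~(i) requires an Eilenberg--Moore convergence argument that depends on simple connectivity and the finite type hypothesis; step~(ii) is where the specific algebraic structure of the cochain functor on Eilenberg--Mac~Lane spaces, together with the hypothesis that $1-\phi$ is surjective on $k$, is indispensable, and is what forces the spacelike hypothesis on the $E_\infty$ side and the finite $H_*(-;k)$-type hypothesis on the space side. Finally, the rational case ($k=\bQ$) can be cross-checked against Theorem~\ref{thm:rht} by producing a natural zigzag of quasi-isomorphisms between $A^*(X)$ and $C^*(X;\bQ)$ in a common category of $E_\infty$ $\bQ$-algebras, so that Bousfield--Gugenheim's equivalence transfers across.
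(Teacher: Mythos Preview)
Your broad architecture matches the paper's: verify the Quillen adjunction directly (injections to surjections, $H_*$-isomorphisms to quasi-isomorphisms), then prove the derived unit is an equivalence by Postnikov induction using an Eilenberg--Moore result (Proposition~\ref{prop:em}) to convert homotopy fiber squares to homotopy pushouts in $\aE_k$, and finally identify the essential image via an algebraic Postnikov tower.

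However, your characteristic~$p$ treatment has two related gaps. First, the claim that after $H_*(-;k)$-localization a simply connected finite type $X$ has Postnikov fibers $K(V_n,n)$ with $V_n$ a finite-dimensional $k$-vector space is false: after $p$-completion the homotopy groups are finitely generated $p$-complete abelian groups (e.g., $\bZ\phat$), not $\bF_p$-vector spaces. The paper handles this by first treating $K(\bZ/p,n)$, then using Proposition~\ref{prop:em} iteratively to reach $K(\bZ/p^m,n)$, and finally a homotopy limit argument for $K(\bZ\phat,n)$; only after that does the Postnikov induction run. Second, you have the role of $1-\phi$ misplaced. The crucial technical step (Theorem~\ref{thm:pems}) is an explicit cofibrant approximation $B_n\to C^*(K(\bZ/p,n))$, built as a homotopy pushout imposing the relation $Q^0x_n=x_n$ on the free $E_\infty$ $k$-algebra $\bE k[n]$. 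Since $U(\bE k[n])\simeq K(k,n)$, applying $\rU$ to this pushout yields a fiber sequence
\[
\rU C^*(K(\bZ/p,n))\to K(k,n)\overto{1-\phi}K(k,n).
\]
Surjectivity of $1-\phi$ (with kernel $\bF_p$) is precisely what makes this fiber equal to $K(\bZ/p,n)$, establishing the derived unit on the base case; it is not used to ``solve equations for $k$-invariants''. The identification of $B_n$ as a cofibrant model---which rests on a comparison of the Dyer--Lashof algebra with the Steenrod algebra---is the deepest input here and is not captured by ``a computation of $H^*(K(V,n);k)$''.
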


Given the Quillen equivalence between rational commutative
differential graded algebras and $E_{\infty}$ $\bQ$-algebras
(Theorem~\ref{ethm:rect}) and the natural quasi-isomorphism (zigzag)
between $A^{*}(-)$ and $C^{*}(-;\bQ)$ \cite[p.~549]{Cochains}, the
rational statement in Theorem~\ref{thm:pht} is equivalent to
Theorem~\ref{thm:rht}.  The Sullivan theory in Theorem~\ref{thm:rht}
often includes observations on \term{minimal models}.  A simply
connected finite type rational commutative differential graded algebra
$A$ has a cofibrant approximation $A'\to A$ whose underlying graded
commutative algebra is free and such that the differential of every
element is decomposable (i.e., is a sum of terms, all of which are
word length greater than 1 in the generators); $A'$ is called a
minimal model and is unique up to isomorphism.  As a consequence,
simply connected simplicial sets of finite rational type are
rationally equivalent if and only if their minimal models are
isomorphic.  The corresponding theory also works in the context of
$E_{\infty}$ $\bQ$-algebras with the analogous definitions and proofs.
The corresponding theory does not work in the context of $E_{\infty}$
algebras in characteristic $p$ for reasons closely related to the fact
that unlike the rational homotopy groups, the $p$-adic homotopy groups
of a simplicial set are not vector spaces.

The equivalences in Theorems~\ref{thm:rht} and~\ref{thm:pht} also
extend to the nilpotent simplicial sets of finite type, but the
corresponding category of $E_{\infty}$ $k$-algebras does not have a
known intrinsic description in the $p$-adic homotopy case; in the
rational case, the corresponding algebraic category consists of the
finite type algebras whose homology is zero in negative cohomological
degrees and whose $H^{0}$ is isomorphic as a $\bQ$-algebra to the
cartesian product of copies of $\bQ$ (cf.~\cite[\S3]{EqEinfty}).

For other fields not addressed in the second part of
Theorem~\ref{thm:pht}, the adjunction does not necessarily restrict to
the indicated subcategories and even when it does, it is never an
equivalence.  To be an equivalence, the unit of the derived adjunction
would have to be an $H_{*}(-;k)$-isomorphism for simply connected
simplicial sets of finite type.  If $k\neq \bQ$ is characteristic zero, then
the right derived functor of $U$ takes $C^{*}(S^{2};k)$ to a
simplicial set
with $\pi_{2}$ isomorphic to $k$; if $k$ is characteristic $p$, then
the right derived functor of $U$ takes $C^{*}(S^{2};k)$ to a simplicial set
with $\pi_{1}$ isomorphic to the cokernel of $1-\phi$.  See
\cite[App.~A]{Einfty} for more precise results.  Because the algebraic
closure of a field $k$ of characteristic $p$ does have $1-\phi$
surjective, even when $C^{*}(-;k)$ is not an equivalence, it can be
used to detect $p$-adic equivalences.  The paper~\cite{Integral}
extends this kind of observation to the case $k=\bZ$:

\begin{thm}[{\cite[Main Theorem]{Integral}}]\label{thm:integral}
Finite type nilpotent spaces or simplicial sets $X$ and $Y$ are weakly equivalent if and
only if $C^{*}(X;\bZ)$ and $C^{*}(Y;\bZ)$ are quasi-isomorphic as
$E_{\infty}$ $\bZ$-algebras.
\end{thm}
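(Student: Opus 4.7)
The forward direction is immediate: $C^{*}(-;\bZ)$ is a functor from simplicial sets to $E_{\infty}$ $\bZ$-algebras that sends weak equivalences to quasi-isomorphisms, since this is already true at the level of underlying chain complexes. So the content lies in the converse.

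The plan for the converse is to use the arithmetic fracture square for finite type nilpotent spaces, which says that such an $X$ is determined up to weak equivalence by the diagram consisting of its rationalization $X_{\bQ}$, its $p$-completions $X\phat$ for all primes $p$, and the comparison isomorphisms $(X\phat)_{\bQ}\homeq X_{\bQ}\otimes \bQ_{p}$ inside the rational homotopy category. So it suffices to produce, from a quasi-isomorphism $f\colon C^{*}(X;\bZ)\to C^{*}(Y;\bZ)$ of $E_{\infty}$ $\bZ$-algebras, a rational equivalence $X_{\bQ}\simeq Y_{\bQ}$ and $p$-adic equivalences $X\phat \simeq Y\phat$ for every prime $p$, and then to check that these assemble compatibly.

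To produce them, I would first replace $C^{*}(X;\bZ)$ and $C^{*}(Y;\bZ)$ by cofibrant (in particular $\bZ$-flat) $E_{\infty}$ $\bZ$-algebras (using the model structure from Example Theorem~\ref{ethm:model}.(c), which applies since the Barratt-Eccles operad $\oE$ satisfies the required section condition over $\bZ$); then derived base change $(-)\otimes^{L}_{\bZ}\bQ$ and $(-)\otimes^{L}_{\bZ}\bF_{p}$ of $f$ yield quasi-isomorphisms $C^{*}(X;\bQ)\simeq C^{*}(Y;\bQ)$ and $C^{*}(X;\bF_{p})\simeq C^{*}(Y;\bF_{p})$ of $E_{\infty}$ algebras (the identifications with cochains being the universal coefficient quasi-isomorphism for finite-type complexes). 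Theorem~\ref{thm:pht} with $k=\bQ$ then yields $X_{\bQ}\simeq Y_{\bQ}$ directly in the rational homotopy category, since the cochain algebras of finite type nilpotent spaces are simply connected in the appropriate sense (handled by the nilpotent extension of Theorem~\ref{thm:pht} indicated in the paragraph after it). For the $p$-adic part, further base change along $\bF_{p}\to \bar\bF_{p}$ gives a quasi-isomorphism $C^{*}(X;\bar\bF_{p})\simeq C^{*}(Y;\bar\bF_{p})$; since $1-\phi$ is surjective on $\bar\bF_{p}$, Theorem~\ref{thm:pht} (in its nilpotent finite-type form) supplies an $H_{*}(-;\bF_{p})$-equivalence of $X$ and $Y$, and for nilpotent finite-type spaces this is the same as an equivalence of $p$-completions.

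The main obstacle I expect is the last step, verifying that the four equivalences above are compatible under rationalization so that they glue via the fracture square to a genuine integral weak equivalence. The key point in favor of it working is that all of these equivalences arise by functorially applying base change to the single integral map $f$, so on the derived level the square
\[
\xymatrix{
C^{*}(X;\bZ)\otimes^{L}_{\bZ}\bQ \ar[r]\ar[d]
  & C^{*}(X;\bF_{p})\otimes^{L}_{\bF_{p}}\bar\bF_{p}\otimes^{L}_{\bZ}\bQ \ar[d]\\
C^{*}(Y;\bZ)\otimes^{L}_{\bZ}\bQ \ar[r]
  & C^{*}(Y;\bF_{p})\otimes^{L}_{\bF_{p}}\bar\bF_{p}\otimes^{L}_{\bZ}\bQ
}
\]
commutes up to coherent homotopy. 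Translating this coherence through the right adjoint $U$ of $C^{*}$ (at the relevant coefficients) into a commuting square of rationalizations in the homotopy category of spaces is the delicate point; once it is in place, the arithmetic fracture for finite type nilpotent spaces identifies $X$ with $Y$.
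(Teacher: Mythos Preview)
The paper itself does not prove this theorem; it only cites \cite{Integral}, so there is no in-text proof to compare against.  Your overall plan---base change the integral quasi-isomorphism to $\bQ$ and to $\bar\bF_{p}$ for each prime, invoke Theorem~\ref{thm:pht} (in its nilpotent extension) to obtain rational and $p$-adic equivalences, and reassemble via the arithmetic fracture square---is indeed the strategy of the cited paper, and you have correctly located the hard step as the compatibility of the fracture data.

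Your proposed compatibility square, however, is vacuous as written: $\bar\bF_{p}\otimes^{L}_{\bZ}\bQ=0$ (since $p=0$ in $\bar\bF_{p}$ while $p$ is a unit in $\bQ$), so both right-hand entries vanish and the diagram commutes for trivial reasons, carrying no information.  The object you actually need to control in the fracture square is $(X\phat)_{\bQ}$, and its cochain algebra lives over $\bQ_{p}$; there is no base-change map from $\bar\bF_{p}$ to $\bQ_{p}$ linking the two sides of your picture.  Concretely, the $p$-adic equivalence $g_{p}\colon Y\phat\simeq X\phat$ supplied by Theorem~\ref{thm:pht} over $\bar\bF_{p}$ is pinned down only by $f\otimes\bar\bF_{p}$ (and even then only up to the Galois action of $\mathrm{Gal}(\bar\bF_{p}/\bF_{p})$), so there is no a priori reason for $(g_{p})_{\bQ}$ to match the map induced on $(\,\cdot\,\phat)_{\bQ}$ by the rational equivalence coming from $f\otimes\bQ$.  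The ``all arise from a single integral $f$'' heuristic is the right intuition, but it does not cash out as the square you wrote; bridging this gap is precisely the content of \cite{Integral}, and it requires a more refined argument working directly with the integral adjunction rather than gluing separately-produced rational and $p$-adic equivalences after the fact.
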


Using the spectral version of Theorem~\ref{thm:pht}
in~\cite[App.~C]{Einfty}, the proof of the previous theorem in
\cite{Integral} extends to show that when $X$ and $Y$ are finite
nilpotent simplicial sets then $X$ and $Y$ are weakly equivalent if and only if
their Spanier-Whitehead dual spectra are weakly equivalent as
$E_{\infty}$ ring spectra.  (This was the subject of a talk by the
author at the Newton Institute in December 2002.)

We use the rest of the section to outline the argument for
Theorems~\ref{thm:rht} and~\ref{thm:pht}, using the notation of the
latter.  We fix a field $k$, which is either $\bQ$ or is
characteristic $p>0$ and has $1-\phi$ surjective.  We write $C^{*}$
for $C^{*}(-;k)$ or when $k=\bQ$ and we are working in the context of
Theorem~\ref{thm:rht}, we understand $C^{*}$ as $A^{*}$.  We also use
$C^{*}$ to denote the derived functor and write $\rU$ for the derived
functor of its adjoint.  The idea of the proof, going back to
Sullivan, is to work with Postnikov towers, and so the first step is
to find cofibrant approximations for $C^{*}(K(\pi,n))$.  For $k=\bQ$,
this is easy since $H^{*}(K(\bQ,n);\bQ)$ is the free graded
commutative algebra on a generator in degree $n$.

\begin{prop}\label{prop:rems}
If $k=\bQ$ then $C^{*}(K(\bQ,n))$ is quasi-isomorphic to the free
($E_{\infty}$ or commutative differential graded) $\bQ$-algebra on a
generator in cohomological degree $n$.
\end{prop}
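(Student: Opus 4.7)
The plan is to construct an explicit map from the free algebra to $C^{*}(K(\bQ,n))$ by the free-forgetful adjunction, and then verify it is a quasi-isomorphism by computing cohomology on both sides. The two inputs needed are (i) the classical computation of $H^{*}(K(\bQ,n);\bQ)$ and (ii) a computation of the cohomology of the free $E_{\infty}$ or commutative $\bQ$-algebra on a single degree-$n$ generator.

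First I would recall, following Serre and Cartan, that $H^{*}(K(\bQ,n);\bQ)$ is the free graded commutative $\bQ$-algebra on a single generator $\iota_{n}$ in cohomological degree $n$: polynomial for $n$ even and exterior for $n$ odd. This goes by induction on $n$ using the path--loop fibration $K(\bQ,n-1)\to *\to K(\bQ,n)$ and the rational Serre spectral sequence, with base case $K(\bQ,1)\simeq_{\bQ} S^{1}$ (or, equivalently, $K(\bQ,2)\simeq_{\bQ}\bC P^{\infty}$), transgressing the unique generator each time.

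Next I would identify the cohomology of the free algebra $F$ on a degree-$n$ generator $x$. In the commutative DGA case this is immediate: the free CDGA on $x$ has zero differential and its underlying graded algebra is already free graded commutative, so $H^{*}(F)$ is the free graded commutative algebra on $x$. For the $E_{\infty}$ case, the cleanest route is to invoke Example Theorem~\ref{ethm:rect} together with Example~\ref{ex:dmeq3}: over $\bQ$, the augmentation $\oE\to\oCom$ is a derived monad equivalence, so the free $\oE$-algebra on $x$ is naturally quasi-isomorphic to the free commutative DGA on $x$, reducing to the previous case. (Alternatively, compute directly: $\oE(k)\otimes_{\Sigma_{k}}(\bQ[n])^{\otimes k}$ computes the derived $\Sigma_{k}$-coinvariants of a $\Sigma_{k}$-module concentrated in a single degree, and rational group (co)homology of finite groups is concentrated in degree $0$, giving $\bQ$ for $n$ even and $\bQ$ only for $k\leq 1$ for $n$ odd.)

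Then I would choose any cocycle representative $c\in C^{n}(K(\bQ,n))$ of $\iota_{n}$; by the free-forgetful adjunction this specifies a map of $E_{\infty}$ (resp.\ commutative differential graded) $\bQ$-algebras $\phi\colon F\to C^{*}(K(\bQ,n))$ with $\phi(x)=c$. On cohomology, $\phi$ sends the free generator to the fundamental class; since both source and target are free graded commutative algebras on a single degree-$n$ generator and $\phi^{*}$ is determined as an algebra map by its value on that generator, $\phi^{*}$ is an isomorphism, so $\phi$ is a quasi-isomorphism. The main subtlety is precisely the step ensuring that over $\bQ$ no extra cohomology classes appear in the free $E_{\infty}$ algebra beyond those of the free commutative algebra; this is exactly where the hypothesis $k=\bQ$ enters essentially, since in characteristic $p$ the Dyer-Lashof operations on the free generator create additional classes and the analogous statement (and hence a correspondingly simple cofibrant model for $C^{*}(K(\pi,n))$) fails.
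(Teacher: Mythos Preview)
Your proposal is correct and follows the same approach as the paper, which treats the proposition as essentially immediate from the classical fact that $H^{*}(K(\bQ,n);\bQ)$ is the free graded commutative algebra on a degree-$n$ generator. The paper offers only that one-line justification, whereas you have spelled out the construction of the comparison map and the reduction of the $E_{\infty}$ case to the commutative case via the rational rectification of Example Theorem~\ref{ethm:rect}; these details are all accurate and fill in exactly what the paper leaves implicit.
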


We use the notation $\bE k[n]$ to denote the free $E_{\infty}$
$k$-algebra on a generator in cohomological degree $n$.  When $k$ is
characteristic $p$, there is a unique map in the homotopy category
from $\bE k[n]\to C^{*}(K(\bZ/p,n))$ that sends the generator $x_{n}$
to a class $i_{n}$ representing the image of the tautological element
of $H^{n}(K(\bZ/p,n);\bZ/p)$.  Unlike the characteristic zero case,
this is not a quasi-isomorphism since $Q^{0}[i_{n}]=[i_{n}]$ in
$H^{*}(C^{*}(K(\bZ/p,n)))$, but $Q^{0}[x_{n}]\neq [x_{n}]$ in
$H^{*}(\bE k[n])$.  Let $B_{n}$ be the homotopy pushout of a map $\bE
k[n]\to \bE k[n]$ sending the generator to a class representing
$[x_{n}]-Q^{0}[x_{n}]$ and the map $\bE k[n]\to k$ sending the
generator to $0$.  Then the map $\bE k[n]\to C^{*}(K(\bZ/p,n))$
factors through a map $B_{n}\to C^{*}(K(\bZ/p,n))$.  (The map in the
homotopy category turns out to be independent of the choices.) The
following is a key result of \cite{Einfty}, whose proof derives from a
calculation of the relationship between the Dyer-Lashof algebra and
the Steenrod algebra.

\begin{thm}[{\cite[6.2]{Einfty}}]\label{thm:pems}
Let $k$ be a field of characteristic $p>0$; then $B_{n}\to
C^{*}(K(\bZ/p,n))$ is cofibrant approximation.
\end{thm}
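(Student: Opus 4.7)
The plan is to prove the map is a quasi-isomorphism by identifying both sides explicitly in terms of the Dyer-Lashof/Steenrod algebras; cofibrancy of $B_{n}$ is automatic from its construction as a homotopy pushout of cofibrant objects in $\aE_{k}$, so the entire content is showing that $B_{n}\to C^{*}(K(\bZ/p,n))$ induces an isomorphism on cohomology.

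First I would compute $H^{*}(\bE k[n])$ using the theory of Dyer-Lashof operations on $E_{\infty}$ $k$-algebras sketched in the discussion leading to Theorem~\ref{thm:pems}. The free $E_{\infty}$ $k$-algebra on a generator in cohomological degree $n$ has cohomology which, as an algebra over the Dyer-Lashof algebra, is ``free'' on the class $[x_{n}]$: it is spanned by admissible monomials in the $Q^{s}$ (and Bocksteins, for $p$ odd) applied to $[x_{n}]$, modulo the Adem relations and instability constraints. This is a purely algebraic calculation, and the operation $Q^{0}$ is in no way redundant here; it acts nontrivially.

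Next I would compute $H^{*}(B_{n})$ using the homotopy pushout description. Because homotopy pushouts in $\aE_{k}$ correspond to cofiber sequences of underlying chain complexes, the effect of the pushout square (attaching a cell along $[x_{n}]-Q^{0}[x_{n}]$ and killing the image) is, on cohomology, to form the quotient of $H^{*}(\bE k[n])$ by the two-sided Dyer-Lashof ideal generated by the class $[x_{n}]-Q^{0}[x_{n}]$ together with the corresponding higher-cell relations coming from the Koszul-type resolution implicit in the pushout. A careful analysis, aided by the interaction of $Q^{0}$ with the Adem relations, should identify $H^{*}(B_{n})$ with the quotient of the free Dyer-Lashof algebra on $[x_{n}]$ by the relation $Q^{0}[x_{n}]=[x_{n}]$ (and its iterates).

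Finally I would match this quotient with the classical Cartan--Serre calculation of $H^{*}(K(\bZ/p,n);\bF_{p})$ as the free unstable algebra over the Steenrod algebra on a class in degree $n$. The comparison uses the index convention $Q^{s}=P^{-s}$ on a class fixed by $Q^{0}$, under which admissible Dyer-Lashof monomials subject to $Q^{0}=\id$ correspond bijectively to admissible Steenrod monomials modulo $P^{0}=\id$. The map $B_{n}\to C^{*}(K(\bZ/p,n))$ sends $[x_{n}]$ to $[i_{n}]$ by construction, and the naturality of Dyer-Lashof operations together with the fact that $Q^{0}$ acts as the identity on cochain cohomology makes it a map of Dyer-Lashof algebras modulo the imposed relation; combined with the algebraic identification above, it is an isomorphism.

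The main obstacle is Step~2/3: identifying the precise quotient of the free Dyer-Lashof algebra obtained from the homotopy pushout and verifying that the relations imposed by killing $[x_{n}]-Q^{0}[x_{n}]$ (along with the higher-cell corrections from the pushout) match exactly the instability and admissibility conditions on the Steenrod side. This requires the fine calculation of the relationship between the Dyer-Lashof and Steenrod algebras referenced in the lead-up to the theorem, in particular a good understanding of how Adem relations interact with the identity $Q^{0}=\id$. Once this algebraic matching is in hand, the comparison of homologies is essentially forced by the behavior on the generator.
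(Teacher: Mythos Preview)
The paper does not actually prove this theorem: it is quoted from \cite{Einfty} with only the remark that the ``proof derives from a calculation of the relationship between the Dyer-Lashof algebra and the Steenrod algebra'' and the observation that it suffices to treat $k=\bF_{p}$ and then extend scalars. Your overall strategy---compute $H^{*}(\bE k[n])$ as a free object for Dyer-Lashof operations, identify $H^{*}(B_{n})$ as the quotient imposing $Q^{0}=\id$, and match against the Cartan--Serre description of $H^{*}(K(\bZ/p,n);\bF_{p})$---is exactly the shape of the argument in \cite{Einfty}, and you correctly flag the algebraic matching as the hard part.

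There is, however, a genuine error in your Step~2. You write that ``homotopy pushouts in $\aE_{k}$ correspond to cofiber sequences of underlying chain complexes,'' and this is false. A homotopy pushout of $E_{\infty}$ $k$-algebras is a derived tensor product (a two-sided bar construction), not a cofiber on underlying complexes; in particular $H^{*}(B_{n})$ is not simply the quotient of $H^{*}(\bE k[n])$ by an ideal, and there is no reason a priori that the attaching map $x_{n}\mapsto x_{n}-Q^{0}x_{n}$ should kill exactly one class in each degree rather than create new Tor classes. The actual argument in \cite{Einfty} handles this by organizing $H^{*}(\bE k[n])$ as a free unstable algebra over an enlarged Steenrod algebra $\oB$ (in which $P^{0}$ is a genuine generator, not the identity), and then proving an algebraic change-of-rings result showing that the derived tensor product along $1-P^{0}$ collapses to the ordinary unstable algebra over the Steenrod algebra $\oA=\oB/(P^{0}-1)$. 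Your sketch gestures at ``higher-cell corrections'' and a ``Koszul-type resolution,'' which is the right instinct, but the mechanism you describe for why those corrections vanish is not correct as stated.
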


(As suggested by the hypothesis, we do not need $1-\phi$ to be
surjective in the previous theorem; indeed, the easiest way to proceed
is to prove it in the case $k=\bF_{p}$ and it then follows easily for
all fields of characteristic $p$ by extension of scalars.)

The two previous results can be used to calculate
$\rU(C^{*}(K(\bQ,n)))$ and\break $\rU(C^{*}(K(\bZ/p,n)))$.  In the rational
case, 
\[
\rU(C^{*}(K(\bQ,n)))\simeq U(\bE \bQ[n]) = Z(C^{n}(\Delta[\bullet])),
\]
the simplicial set of $n$-cocycles of $C^{*}(\Delta[\bullet];\bQ)$;
this is
the original model for $K(\bQ,n)$, and a straightforward argument
shows that the unit map $K(\bQ,n)\to K(\bQ,n)$ is a weak equivalence
(the identity map with this model).  In the context of
Theorem~\ref{thm:rht}, the same kind of argument is made in
\cite[10.2]{BG}.  In the $p$-adic case, we likewise have that $U(\bE
k[n])$ is the original model for $K(k,n)$, and so we get a fiber
sequence 
\[
\Omega K(k,n)\to \rU(K(\bZ/p,n))\to K(k,n)\to K(k,n).
\]
The map $K(k,n)\to K(k,n)$ is calculated in \cite[6.3]{Einfty} to be
the map that on $\pi_{n}$ induces $1-\phi$.  The kernel of $1-\phi$ is
$\bF_{p}$ and the unit map $K(\bZ/p,n)\to \rU(C^{*}(K,\bZ/p,n))$ is an
isomorphism on $\pi_{n}$.  As a consequence, when $1-\phi$ is
surjective (as we are assuming), the unit map is a weak equivalence
for $K(\bZ/p,n)$.

The game now is to show that for all finite type simply connected (or
nilpotent) simplicial sets, the derived unit map $X\to \rU C^{*}(X)$ is a
rational or $p$-adic equivalence.  The next result tells how to construct a cofibrant
approximation for a homotopy pullback; it is not a formal consequence
of the Quillen adjunction, but rather a version of the Eilenberg-Moore
theorem. 

\begin{prop}[{\cite[\S3]{BG}, \cite[\S3]{Einfty}}]\label{prop:em}
Let 
\[
\xymatrix@-1pc{%
W\ar[r]\ar[d]&Y\ar[d]\\Z\ar[r]&X
}
\]
be a homotopy fiber square of simplicial sets.  If $X,Y,Z$ are finite
$H_{*}(-;k)$-type and $X$ is simply connected, then
\[
\xymatrix@-1pc{%
C^{*}(X)\ar[r]\ar[d]&C^{*}(Y)\ar[d]\\C^{*}(Z)\ar[r]&C^{*}(W)
}
\]
is a homotopy pushout square of $E_{\infty}$ $k$-algebras or rational
commutative differential graded algebras. 
\end{prop}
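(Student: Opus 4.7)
The strategy will be to reduce to a universal case of a principal Eilenberg-MacLane fibration via a Moore-Postnikov-style tower over $X$, and then handle that case using the explicit cofibrant models for $C^{*}(K(\pi,n))$ provided by Proposition~\ref{prop:rems} (when $k=\bQ$) or Theorem~\ref{thm:pems} (when $k$ has characteristic $p>0$). First I would replace $Y\to X$ by a weakly equivalent Kan fibration so that $W=Y\times_{X}Z$ is a strict pullback. Since $X$ is simply connected and of finite $H_{*}(-;k)$-type, a standard Moore-Postnikov argument (using Serre spectral sequence control over the simply connected base) factors $Y\to X$ as a convergent inverse limit
\[
Y\homeq \varprojlim_{n} Y_{n}\to\dotsb\to Y_{1}\to Y_{0}=X,
\]
where each $Y_{n+1}\to Y_{n}$ is the pullback of a path-space fibration $*\to K(\pi_{n},m_{n}+1)$ along a classifying map, with $\pi_{n}$ a finitely generated abelian group and $m_{n}\to\infty$.

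Pulling each stage back along $Z\to X$ expresses $W$ as an inverse limit of $W_{n}:=Y_{n}\times_{X}Z$; since $m_{n}\to\infty$ the tower stabilizes in any fixed cohomological degree, and a Milnor-sequence and $\lim^{1}$-vanishing argument shows that $C^{*}(W)$ is modeled by the homotopy colimit (in $E_{\infty}$ $k$-algebras) of the $C^{*}(W_{n})$, with the analogous statement on the pushout side. Thus, by induction on the tower stages, the proposition reduces to the single-stage universal case: for any map $T\to K(\pi,n)$ with $T$ finite $H_{*}(-;k)$-type, $\pi$ finitely generated abelian, and $n\geq 2$, showing that the natural map
\[
C^{*}(T)\otimes^{h}_{C^{*}(K(\pi,n))}k\longrightarrow C^{*}(T\times^{h}_{K(\pi,n)}*)
\]
is a quasi-isomorphism of $E_{\infty}$ $k$-algebras (respectively, of rational commutative differential graded algebras when $k=\bQ$).

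For this universal case I would use the explicit cofibrant model $P\to C^{*}(K(\pi,n))$ from the cited results: a free $E_{\infty}$ (or commutative) algebra on a generator in degree $n$ in characteristic zero; the algebra $B_{n}$ of Theorem~\ref{thm:pems} in characteristic $p$ when $\pi=\bZ/p$; and iterated semifree extensions built from these for general finitely generated $\pi$. With such a $P$, the homotopy pushout becomes the strict pushout $C^{*}(T)\otimes_{P}k$, which is a cell extension of $C^{*}(T)$. Applying $\rU$ turns this into the homotopy pullback of $\rU C^{*}(T)\to\rU C^{*}(K(\pi,n))\from\rU k$; combining the already-established weak equivalences $K(\pi,n)\homeq\rU C^{*}(K(\pi,n))$ and $*\homeq\rU k$ with the derived unit for $T$ (which is an $H_{*}(-;k)$-isomorphism by the main-theorem machinery of Theorem~\ref{thm:rht} or~\ref{thm:pht}) identifies this pullback with $T\times^{h}_{K(\pi,n)}*$ up to $H_{*}(-;k)$-equivalence, yielding the required quasi-isomorphism via the adjunction.

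The main obstacle will be making the tower-convergence argument precise: identifying $C^{*}$ of the strict inverse limit $\varprojlim Y_{n}$ with the homotopy colimit of the $C^{*}(Y_{n})$ in $E_{\infty}$ $k$-algebras, and doing so compatibly with base change along $Z\to X$. This requires Künneth-type calculations at each stage (to handle the cross products in the pullback squares) together with a $\lim^{1}$-vanishing analysis, both relying essentially on the finite-type hypotheses on $X$, $Y$, $Z$ and on the simple connectivity of $X$ to bound Serre spectral sequence filtrations. A secondary, more routine obstacle is reducing general finitely generated $\pi$ to the building blocks $K(\bZ,n)$ and $K(\bZ/p,n)$ via short exact sequences of abelian groups and iterated principal fibrations, together with checking that cofibrancy of the model algebra is preserved under base change to $C^{*}(Z)$.
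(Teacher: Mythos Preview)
Your argument is circular. In the paper's logical structure, Proposition~\ref{prop:em} is an \emph{input} to the proof of Theorems~\ref{thm:rht} and~\ref{thm:pht}: it is exactly what lets one climb the Postnikov tower and conclude that the derived unit $T\to\rU C^{*}(T)$ is an $H_{*}(-;k)$-equivalence for simply connected finite type $T$. You invoke that very conclusion (``the derived unit for $T$ \dots\ is an $H_{*}(-;k)$-isomorphism by the main-theorem machinery'') in your universal case, so your argument assumes what Proposition~\ref{prop:em} is meant to establish. Even the special case $\pi=\bZ/p^{m}$, which you would need in the tower, is deduced in the paper \emph{from} Proposition~\ref{prop:em}, not before it. Moreover, your final step ``yielding the required quasi-isomorphism via the adjunction'' does not follow: knowing that $\rU$ of a map of $E_{\infty}$ $k$-algebras is an $H_{*}(-;k)$-equivalence does not give that the map itself is a quasi-isomorphism unless you already know $C^{*}$ is fully faithful on the relevant subcategory, which again is the content of the main theorems.

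The proof in the cited references (\cite[\S3]{BG}, \cite[\S3]{Einfty}) proceeds by an entirely different, non-circular route: it is a direct comparison of the bar-construction spectral sequence computing the homology of the homotopy pushout $C^{*}(Y)\amalg^{h}_{C^{*}(X)}C^{*}(Z)$ with the Eilenberg--Moore spectral sequence computing $H^{*}(W;k)$. Both have $E_{2}$-page $\Tor_{H^{*}(X;k)}(H^{*}(Y;k),H^{*}(Z;k))$, the comparison map is an isomorphism there, and the simple connectivity of $X$ together with the finite type hypotheses give convergence. No appeal to the adjoint $\rU$ or to the main theorems is needed.
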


Since we can write $K(\bZ/p^{m},n)$ as the homotopy fiber of a map 
\[
K(\bZ/p^{m-1},n)\to K(\bZ/p,n+1),
\]
we see that the unit of the derived adjunction is a weak equivalence
also for $K(\bZ/p^{m},n)$ (when $k$ is characteristic $p$).  Likewise,
since products are homotopy pullbacks, we also get that the unit of
the derived adjunction is a weak equivalence for $K(A,n)$ when $A$ is
a $\bQ$ vector space (when $k=\bQ$) or when $A$ is a finite $p$-group
(when $k$ is characteristic $p$). Although also not a formal
consequence of the adjunction, it is elementary to see that when a
simplicial set $X$ is the homotopy limit of a sequence $X_{j}$ and the
map $\colim H^{*}(X_{j};k)\to H^{*}(X;k)$ is an isomorphism, then
$C^{*}(X)$ is the homotopy colimit of $C^{*}(X_{j})$ and $\rU
C^{*}(X)$ is the homotopy limit of $\rU C^{*}(X_{j})$. It follows that
for $K(\bZ\phat,n)$, the unit of the derived adjunction is a weak
equivalence (when $k$ is characteristic $p$).  For any finitely
generated abelian group, the map $K(A,n)\to K(A\otimes \bQ,n)$ is a
rational equivalence and the map $K(A,n)\to K(A\phat,n)$ is a $p$-adic
equivalence.  Putting these results and tools all together, we see
that the unit of the derived equivalence is an $H_{*}(-;k)$
equivalence for any $X$ that can be built as a sequential homotopy
limit $\holim X_{j}$ where $X_{0}=*$, the connectivity of the map
$X\to X_{j}$ goes to infinity, and each $X_{j+1}$ is the homotopy
fiber of a map $X_{j}\to K(\pi_{j+1},n)$ for $\pi_{j+1}$ a finitely
generated abelian group, or the rationalization (when $k=\bQ$) or
$p$-completion (when $k$ is characteristic $p$) of a finitely
generated abelian group.  In particular, for a simply connected
simplicial set, applying this to the Postnikov tower, we get the
following result.

\begin{thm}
Assume $k=\bQ$ or $k$ is characteristic $p>0$ and $1-\phi$ is surjective.
If $X$ is a simply connected simplicial set of finite
$H_{*}(-;k)$-type, then the unit of the derived adjunction $X\to \rU
C^{*}(X)$ is an $H_{*}(-;k)$-equivalence.
\end{thm}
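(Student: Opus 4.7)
The plan is to climb the Postnikov tower of $X$, establishing the derived-unit equivalence one stage at a time, with Proposition~\ref{prop:em} providing the gluing tool and Propositions~\ref{prop:rems}/\ref{thm:pems} providing the atomic base cases. After replacing $X$ by its $k$-localization (which preserves simple connectivity, finite $H_{*}(-;k)$-type, and $k$-cohomology, so does not affect either the hypothesis or conclusion), one may assume $X = \holim_{j} X_{j}$ with $X_{0} = *$, the connectivity of $X \to X_{j}$ tending to infinity, and each $X_{j+1}$ realized as the homotopy fiber of a map $X_{j} \to K(\pi_{j+1}, n_{j}+1)$ where $\pi_{j+1}$ is a finite-dimensional $\bQ$-vector space (rational case) or a finitely generated $\bZ_{p}$-module (characteristic $p$ case).

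The first step is the base case on Eilenberg-Mac~Lane spaces. For $K(\bQ, n)$, Proposition~\ref{prop:rems} identifies $C^{*}(K(\bQ,n))$ with the cofibrant $\bE\bQ[n]$, and $U(\bE\bQ[n]) = Z(C^{n}(\Delta[\bullet]))$ is the standard simplicial model of $K(\bQ, n)$; a direct check confirms the derived unit is an equivalence. For $K(\bZ/p, n)$, applying $\rU$ to the homotopy pushout defining $B_{n}$ (Theorem~\ref{thm:pems}) produces a homotopy pullback square realizing $\rU C^{*}(K(\bZ/p, n))$ as the homotopy fiber of a self-map of $K(k, n)$ inducing $1-\phi$ on $\pi_{n}$; the surjectivity hypothesis identifies this fiber with $K(\ker(1-\phi), n) = K(\bF_{p}, n) \simeq K(\bZ/p, n)$, and a $\pi_{n}$-calculation matches the equivalence with the derived unit. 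The remaining Eilenberg-Mac~Lane base cases, namely $K(A, n)$ for $A$ a finite-dimensional $\bQ$-vector space, a finite abelian $p$-group, or $\bZ_{p}^{\wedge}$, follow from these atoms by iterating Proposition~\ref{prop:em} on the product square and on the fiber sequence $K(\bZ/p, n) \to K(\bZ/p^{m}, n) \to K(\bZ/p^{m-1}, n)$, together with the fact that $C^{*}$ converts sequential homotopy limits of connectivity-increasing towers of spaces into sequential homotopy colimits of $E_{\infty}$ $k$-algebras, which $\rU$ carries back to the corresponding homotopy limits of simplicial sets.

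For the inductive step up the Postnikov tower, suppose the derived-unit equivalence holds for $X_{j}$ and for $K(\pi_{j+1}, n_{j}+1)$. The homotopy pullback square defining $X_{j+1}$ has its corners simply connected and of finite $H_{*}(-;k)$-type, so Proposition~\ref{prop:em} applies and turns it under $C^{*}$ into a homotopy pushout square of $E_{\infty}$ $k$-algebras; applying $\rU$ to this pushout returns a homotopy pullback square of simplicial sets, and naturality of the unit combined with the inductive hypotheses at three of the four corners forces the unit at $X_{j+1}$ to be an $H_{*}(-;k)$-equivalence. The infinite Postnikov tower is then handled by the same sequential-limits argument as for $\bZ_{p}^{\wedge}$ above. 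The main obstacle is concentrated in the cited ingredients: Proposition~\ref{prop:em} (an Eilenberg-Moore convergence statement whose proof requires a careful cell-by-cell comparison on the $E_\infty$ side) and Theorem~\ref{thm:pems} (the explicit cofibrant approximation of $C^{*}(K(\bZ/p, n))$, which rests on a delicate analysis of the interaction between Dyer-Lashof and Steenrod operations). Once these are in hand, the Postnikov induction itself is essentially a formal bookkeeping argument assembling base cases and fiber squares.
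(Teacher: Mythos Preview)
Your proposal is correct and follows essentially the same approach as the paper: base cases on Eilenberg--Mac~Lane spaces via Proposition~\ref{prop:rems} and Theorem~\ref{thm:pems}, then a Postnikov-tower induction using Proposition~\ref{prop:em} and the compatibility of $C^{*}$ and $\rU$ with sequential limits. The only cosmetic difference is that you localize $X$ at the outset, whereas the paper instead invokes the $H_{*}(-;k)$-equivalences $K(A,n)\to K(A\otimes\bQ,n)$ and $K(A,n)\to K(A\phat,n)$ to reduce the Postnikov invariants to the local case.
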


The previous theorem formally implies that $C^{*}$ induces an
equivalence of the $H_{*}(-;k)$-local homotopy category of simply connected simplicial sets of
finite $H_{*}(-;k)$-type with the full subcategory of the homotopy
category $E_{\infty}$ $k$-algebras or rational commutative
differential graded algebras of objects in its image.  The remainder
of Theorems~\ref{thm:rht} and~\ref{thm:pht} is identifying this image
subcategory.  In the case when $k=\bQ$, it is straightforward to see
that a finite type simply connected algebra has a cofibrant
approximation that $\rU$ turns into a simply connected principal
rational finite type Postnikov tower.  The argument for $k$ of
characteristic $p$ is analogous, but more complicated;
see~\cite[\S7]{Einfty}. 


\bibliographystyle{amsplain}\let\bibcomma\relax
\bibliography{operadbib}

\end{document}